\newtheorem{definition}{Definition}
\newtheorem{theorem}{Theorem}
\newtheorem{lemma}{Lemma}
\newtheorem{remark}{Remark}
\newtheorem{corollary}{Corollary}
\newcommand{\Binom}{\mathrm{Bin}}
\tikzset{cross/.style={cross out, draw=black, minimum size=2*(#1-\pgflinewidth), inner sep=0pt, outer sep=0pt},
cross/.default={1pt}}
\tikzstyle{int}=[draw, fill=blue!20, minimum size=2em]
\tikzstyle{dot}=[circle, draw, fill=blue!20, minimum size=2em]
\tikzstyle{dotred}=[circle, draw, fill=red!20, minimum size=2em]
\tikzstyle{init} = [pin edge={to-,thin,black}]
\tikzstyle{initred} = [pin edge={to-,thin,red}]
\tikzstyle{plan}=[draw, fill=blue!20, minimum size=2em, text width=5em, rounded corners,align=center]
\tikzstyle{planwide}=[draw, fill=blue!20, minimum size=2em, text width=8em, rounded corners,align=center]
\tikzstyle{vertexdot}=[circle, draw, fill=white, minimum size=3,inner sep=0pt]
\newcommand{\Vertex}{\tikz[scale=0.5]{\draw (0,0) node (zero) [vertexdot] {};}}
\newcommand{\Edge}{\tikz[scale=0.5]{\draw (0,0) node (zero) [vertexdot] {} -- (0.5,0) node (zero) [vertexdot] {};}}
\newcommand{\PathThree}{\tikz[scale=0.5]{\draw (0,0) node (zero) [vertexdot] {} -- (0.5,0) node (zero) [vertexdot] {} -- (1.0,0) node (zero) [vertexdot] {};}}
\newcommand{\PathFour}{\tikz[scale=0.5]{\draw (0,0) node (zero) [vertexdot] {} -- (0.5,0) node (zero) [vertexdot] {} -- (1.0,0) node (zero) [vertexdot] {}-- (1.5,0) node (zero) [vertexdot] {};}}
\newcommand{\DoubleEdge}{\tikz[scale=0.5,baseline=(zero.base)]{\draw (0,0) node (zero) [vertexdot] {} -- (0.5,0) node[vertexdot] {};\draw (0,0.5) node[vertexdot] {} -- (0.5,0.5) node[vertexdot] {};}}
\newcommand{\Square}{\tikz[scale=0.5,baseline=(zero.base)]{\draw (0,0) node (zero) [vertexdot] {} -- (0.5,0) node[vertexdot] {} -- (0.5,0.5) node[vertexdot] {} -- (0,0.5) node[vertexdot] {} -- cycle;}}
\newcommand{\Triangle}{\tikz[scale=0.5,baseline=(zero.base)]{\draw (0,0) node (zero) [vertexdot] {} -- (0.5,0) node[vertexdot] {} -- (0.25,{0.5*sin(60)}) node[vertexdot] {} -- cycle;}}
\newcommand{\Paw}{\tikz[scale=0.5,baseline=(zero.base)]{\draw (0,0) node (zero) [vertexdot] {} -- (0.5,0) node[vertexdot] {} -- (0.25,{0.5*sin(60)}) node[vertexdot] {} -- cycle; \draw (0.5,0) node[vertexdot] {} -- (1,0) node[vertexdot] {};}}
\newcommand{\BrokenTriangle}{\tikz[scale=0.5,baseline=(zero.base)]{\draw (0,0) node (zero) [vertexdot] {} -- (0.25,{0.5*sin(60)}) node[vertexdot] {} -- (0.5,0) node[vertexdot] {};}}
\newcommand{\Kfour}{\tikz[scale=0.5,baseline=(zero.base)]{\draw (0,0)--(0.5,0.5); \draw (0.5,0)--(0,0.5); \draw (0,0) node (zero) [vertexdot] {} -- (0.5,0) node[vertexdot] {} -- (0.5,0.5) node[vertexdot] {} -- (0,0.5) node[vertexdot] {} -- cycle; }}
\renewcommand{\Diamond}{\tikz[scale=0.5,baseline=(zero.base)]{\draw (0.5,0)--(0,0.5); \draw (0,0) node (zero) [vertexdot] {} -- (0.5,0) node[vertexdot] {} -- (0.5,0.5) node[vertexdot] {} -- (0,0.5) node[vertexdot] {} -- cycle; }}
\renewcommand{\tilde}{\widetilde}
\newcommand{\stepa}[1]{\overset{\rm (a)}{#1}}
\newcommand{\stepb}[1]{\overset{\rm (b)}{#1}}
\newcommand{\floor}[1]{{\left\lfloor {#1} \right \rfloor}}
\newcommand{\naturals}{\mathbb{N}}
\newcommand{\Expect}{\mathbb{E}}
\newcommand{\expect}[1]{\mathbb{E}\left[#1\right]}
\newcommand{\Prob}{\mathbb{P}}
\newcommand{\prob}[1]{\mathbb{P}\left[#1\right]}
\newcommand{\pprob}[1]{\mathbb{P}[#1]}
\newcommand{\TV}{{\rm TV}}
\newcommand{\iid}{i.i.d.\xspace}
\newcommand{\hellinger}{H}
\newcommand{\pth}[1]{\left( #1 \right)}
\newcommand{\qth}[1]{\left[ #1 \right]}
\newcommand{\sth}[1]{\left\{ #1 \right\}}
\newcommand{\eqdistr}{{\stackrel{\rm law}{=}}}
\newcommand{\iiddistr}{{\stackrel{\text{\iid}}{\sim}}}
\newcommand{\sdistr}{{\stackrel{\text{ind.}}{\sim}}}
\newcommand{\Var}{\mathsf{Var}}
\newcommand{\Cov}{\mathsf{Cov}}
\newcommand{\Bern}{\mathrm{Bern}}
\newcommand{\Poisson}{\mathrm{Poisson}}
\newcommand{\Indc}{\mathbbm{1}}
\newcommand{\indc}[1]{\Indc\left\{{#1}\right\}}
\newcommand{\tG}{{\widetilde{G}}}
\newcommand{\tH}{{\widetilde{H}}}
\newcommand{\sfa}{{\mathsf{a}}}
\newcommand{\sfc}{{\mathsf{c}}}
\newcommand{\sfe}{{\mathsf{e}}}
\newcommand{\sfg}{{\mathsf{g}}}
\newcommand{\sfv}{{\mathsf{v}}}
\newcommand{\calC}{{\mathcal{C}}}
\newcommand{\calF}{{\mathcal{F}}}
\newcommand{\calG}{{\mathcal{G}}}
\newcommand{\calH}{{\mathcal{H}}}
\newcommand{\Th}{{^{\rm th}}}
\newcommand{\s}{\mathsf{s}}
\newcommand{\n}{\mathsf{n}}
\newcommand{\ind}{\s}
\newcommand{\cc}{\mathsf{cc}}
\newcommand{\cchat}{\widehat{\mathsf{cc}}}
\newcommand{\cctilde}{\widetilde{\mathsf{cc}}}
\newcommand{\de}{d}
\newcommand{\vertex}{v}
\title{Estimating the Number of Connected Components in a Graph via Subgraph Sampling}
\date{\today}
\author{Jason M. Klusowski\thanks{Department of Statistics, Rutgers University -- New Brunswick, Piscataway, NJ, 8019, email: \url{jason.klusowski@rutgers.edu}.} \and Yihong Wu\thanks{Department of Statistics and Data Science, Yale University, New Haven, CT, 06511, email: \url{yihong.wu@yale.edu}. This research was supported in part by the NSF Grant IIS-1447879, CCF-1527105, an NSF CAREER award CCF-1651588, and an Alfred Sloan fellowship.}}
\begin{document}

\maketitle

\begin{abstract}

Learning properties of large graphs from samples has been an important problem in statistical network analysis since the early work of Goodman \cite{Goodman1949} and Frank \cite{Frank1978}. We revisit a problem formulated by Frank \cite{Frank1978} of estimating the number of connected components in a large graph based on the subgraph sampling model, in which we randomly sample a subset of the vertices and observe the induced subgraph. The key question is whether accurate estimation is achievable in the \emph{sublinear} regime where only a vanishing fraction of the vertices are sampled. We show that it is impossible if the parent graph is allowed to contain high-degree vertices or long induced cycles. For the class of chordal graphs, where induced cycles of length four or above are forbidden, we characterize the optimal sample complexity within constant factors and construct linear-time estimators that provably achieve these bounds. This significantly expands the scope of previous results which have focused on unbiased estimators and special classes of graphs such as forests or cliques.

Both the construction and the analysis of the proposed methodology rely on combinatorial properties of chordal graphs and identities of induced subgraph counts. They, in turn, also play a key role in proving minimax lower bounds based on construction of random instances of graphs with matching structures of small subgraphs. 
\end{abstract}



\tableofcontents

\section{Introduction}
\label{sec:intro}

Counting the number of features in a graph -- ranging from basic local structures like motifs or graphlets (e.g., edges, triangles, wedges, stars, cycles, cliques) to more global features like the number of connected components -- is an important task in network analysis. For example, the global clustering coefficient of a graph (i.e.~the fraction of closed triangles) is a measure of the tendency for nodes to cluster together and a key quantity used to study cohesion in various networks \cite{Luce1949}. 
To learn these graph properties, applied researchers typically collect data from a random sample of nodes to construct a representation of the true network.
We refer to these problems collectively as \emph{statistical inference on sampled networks}, where the goal is to infer properties of the parent network (population) from a subsampled version. Below we mention a few examples that arise in various fields of study.

\begin{itemize}
\item Sociology: Social networks of the Hadza hunter-gatherers of Tanzania were studied in \cite{Apicella2012} by surveying 205 individuals in 17 Hadza camps (from a population of 517). Another study \cite{Conley2010} of farmers in Ghana used network data from a survey of 180 households in three villages from a population of 550 households.
\item Economics and business: Low sampling ratios have been used in applied economics (such as 30\% in \cite{Fafchamps2003}), particularly for large scale studies \cite{Bandiera2006, Feigenberg2010}. A good overview of various experiments in applied economics and their corresponding sampling ratios can be found in \cite[Appendix F, p. 11]{Chandrasekhar2011}. Word of mouth marketing in consumer referral networks was studied in \cite{Reingen1986} using 158 respondents from a potential subject pool of 238.
\item Genomics: The authors of \cite{Stumpf2008} use protein-protein interaction data and demonstrate that it is possible to arrive at a reliable statistical estimate for the number of interactions (edges) from a sample containing approximately 1500 vertices. 

\item World Wide Web and Internet: Informed random IP address probing was used in \cite{Govindan2000} in an attempt to obtain a router-level map of the Internet.
\end{itemize}


As mentioned earlier, a primary concern of these studies is how well the data represent the true network and how to reconstruct the relevant properties of the parent graphs from samples.
These issues and how they are addressed broadly arise from two perspectives: 

\begin{itemize}
\item The full network is unknown due to the lack of data, which could arise from the underlying experimental design and data collection procedure, e.g., 
historical or observational data. In this case, one needs to construct statistical estimators (i.e., functions of the sampled graph) to conduct sound inference. These estimators must be designed to account for the fact that the sampled network is only a partial observation of the true network, and thus subject to certain inherent biases and variability.

\item The full network is either too large to scan or too expensive to store. In this case, approximation algorithms can overcome such computational or storage issues that would otherwise be unwieldy. For example, for massive social networks, it is generally impossible to enumerate the whole population. Rather than reading the entire graph, query-based algorithms randomly (or deterministically) sample parts of the graph or adaptively explore the graph through a random walk \cite{Peres2017}. Some popular instances of traversal based procedures are snowball sampling \cite{Goodman1961} and respondent-driven sampling \cite{Salganik2004}. Indeed, sampling (based on edge and degree queries) is a commonly used primitive to speed up computation, which leads to various sublinear-time algorithms for testing or estimating graph properties such as the average degree 
\cite{Goldreich2008}, triangle and more general subgraph counts 
\cite{Eden2015, aliakbarpour2017sublinear}, expansion properties \cite{GR11}; we refer the readers to the monograph \cite{Goldreich17}.
\end{itemize}

Learning properties of graphs from samples has been an important problem in statistical network analysis since the early work of Goodman \cite{Goodman1949} and Frank \cite{Frank1978}. Estimation of various properties such as graph totals \cite{Frank1977} and connectivity \cite{capobianco1972estimating,Frank1978} has been studied in a variety of sample models. However, most of the analysis has been confined to obtaining unbiased estimators for certain classes of graphs and little is known about their optimality.
The purpose of this paper is to initiate a systematic study of statistical inference on sampled networks, with the goal of determining their statistical limits in terms of minimax risks and sample complexity, achieved by computationally efficient procedures.

As a first step towards this goal, in this paper we focus on a representative problem introduced in \cite{Frank1978}, namely, estimating the number of connected components in a graph from a partial sample of the population network. In fact, the techniques developed in this paper are also useful for estimating other graph statistics such as motif counts, which were studied in the companion paper \cite{KlusowskiWu2017-motif}.

Before we proceed, let us emphasize that the objective of this paper is \emph{not} testing whether the graph is connected, which is a property too fragile to test on the basis of a small sampled graph; indeed, missing a single edge can destroy the connectivity.
Instead, our goal is to estimate the number of connected components with an optimal additive accuracy. Thus, naturally, it is applicable to graphs with a large number of components.

We study the problem of estimating the number of connected components for two reasons. First, it encapsulates many challenging aspects of statistical inference on sampled graphs, and we believe the mathematical framework and machinery developed in this paper will prove useful for estimating other graph properties as well.
Second, the number of connected components is a useful graph property that 
quantifies the connectivity of a network. In addition, it finds use in data-analytic applications related to determining the number of classes in a population \cite{Goodman1949}. Another example is the recent work \cite{ChenShrivastavaSteorts2017}, which studies the estimation of the number of documented deaths in the Syrian Civil War from a subgraph induced by a set of vertices obtained from an adaptive sampling process (similar to subgraph sampling). There, the goal is to estimate the number of unique individuals in a population, which roughly corresponds to the number of connected components in a network of duplicate records connected by shared attributes.

Next we discuss the sampling model, which determines how reflective the data is of the population graph and therefore the quality of the estimation procedure. There are many ways to sample from a graph (see \cite{Duffield2014,Leskovec2006} for a list of techniques and \cite{Kolaczyk2017, Kolaczyk2009, Handcock2010} for comprehensive reviews). 
For simplicity, this paper focuses on the simplest sampling model, namely, \emph{subgraph sampling}, where we randomly sample a subset of the vertices and observe their induced subgraph; in other words, only the edges between the sampled vertices are revealed. 
Results on estimating motif counts for the related neighborhood sampling model can be found in the companion paper \cite{KlusowskiWu2017-motif}.
 One of the earliest works that adopts the subgraph sampling model is by Frank \cite{Frank1978}, which is the basis for the theory developed in this paper. 
Drawing from previous work on estimating population total using vertex sampling \cite{Frank1977}, Frank obtained unbiased estimators of the number of connected components and performance guarantees (variance calculations) for graphs whose connected components are either all trees or all cliques.
Extensions to more general graphs are briefly discussed, although no unbiased estimators are proposed. 
This generality is desirable since it is more realistic to assume that the objects in each class (component) are in between being weakly and strongly connected to each other, corresponding to having the level of connectivity between a tree and clique. 
While the results of Frank are interesting, questions of their generality and optimality remain open and we therefore address these matters in the sequel.
Specifically, the main goals of this paper are as follows:
\begin{itemize}
	\item Characterize the sample complexity, i.e., the minimal sample size to achieve a given accuracy, as a function of graph parameters.
	\item  Devise computationally efficient estimators that provably achieve the optimal sample complexity bound.
\end{itemize}
Of particular interest is the \emph{sublinear regime}, where only a vanishing fraction of the vertices are sampled. In this case, it is impossible to reconstruct the entire graph, but it might still be possible to accurately estimate the desired graph property.

The problem of estimating the number of connected components in a large graph has also been studied in the computer science literature, where the goal is to 
design randomized algorithms with sublinear (in the size of the graph) time complexity. The celebrated work \cite{Chazelle2005} proposed a randomized algorithm 
to estimate the number of connected components in a general graph (motivated by computing the weight of the minimum spanning tree) within an additive error of $ \epsilon N$  for graphs with $ N $ vertices and average degree $ \de_{\text{avg}} $, with runtime $ O(\frac{\de_{\text{avg}}}{\epsilon^{2}} \log \frac{\de_{\text{avg}}}{\epsilon}) $. Their method relies on data obtained from a random sample of vertices and then performing a breadth first search on each vertex which ends according to a random stopping criterion. The algorithm requires knowledge of the average degree $ \de_{\text{avg}} $ and must therefore be known or estimated a priori. 
The runtime was further improved to $ O(\epsilon^{-2} \log \frac{1}{\epsilon}) $ by modifying the stopping criterion \cite{Berenbrink2014}.
In these algorithms, the breadth first search may visit many of the edges and explore a larger fraction of the graph at each round. From an applied perspective, such traversal based procedures can be impractical or impossible to implement in many statistical applications due to limitations inherent in the experimental design and it is more realistic to treat the network data as a random sample from a parent graph.

Finally, let us compare, conceptually, the framework in the present paper with the work on \emph{model-based} network analysis, where networks are modeled as random graphs drawn from specific generative models, such as the stochastic block model \cite{holland1983stochastic}, graphons \cite{GLZ15}, or exponential random graph models \cite{HL81} (cf.~the recent survey \cite{Kolaczyk2017}), and performance analysis of statistical procedures for parameter estimation or clustering are carried out for these models.
In contrast, in network sampling we adopt a \emph{design-based} framework \cite{Handcock2010}, where the graph is assumed to be deterministic and the randomness comes from the sampling process.

\paragraph{Organization}
The paper is organized as follows. In \prettyref{sec:model}, we formally define the estimation problem, the subgraph sampling model, and describe what classes of graphs we will be focusing on. 
To motivate our attention on specific classes of graphs (chordal graphs with maximum degree constraints), we show that in the absence of such structural assumptions, sublinear sample complexity is impossible in the sense that at least a constant faction of the vertices need to be sampled. \prettyref{sec:main} introduces the definition of chordal graphs and states our main results in terms of the minimax risk and sample complexity.
In \prettyref{sec:algorithms}, after introducing the relevant combinatorial properties of chordal graphs, we define the estimator of the number of connect components and provide its statistical guarantees. We also propose a heuristic for constructing an estimator on non-chordal graphs.
In \prettyref{sec:lower}, we develop a general strategy for proving minimax lower bound for estimating graph properties and particularize it to obtain matching lower bounds for the estimator constructed in \prettyref{sec:algorithms}. 

Some of the technical proofs, additional results for the uniform sampling model and for forests and graphs with long cycles, and a numerical study of the proposed estimators on simulated data for various graphs are deferred till \prettyref{app:proofs}, \prettyref{app:results}, and \prettyref{app:experiments}, respectively.

\paragraph{Notations}

We use standard big-$O$ notations,
e.g., for any positive sequences $\{a_n\}$ and $\{b_n\}$, $a_n=O(b_n)$ or $a_n \lesssim b_n$ if $a_n \leq C b_n$ for some absolute constant $C>0$,
$a_n=o(b_n)$ or $a_n \ll b_n$ or if $\lim a_n/b_n = 0$.
Furthermore, the subscript in $a_n=O_{r}(b_n)$ means $a_n \leq C_r b_n$ for some constant $C_r$ depending on the parameter $r$ only. For positive integer $ k $, let $ [k] = \{1, \dots, k \} $.
Let $\Bern(p)$ denote the Bernoulli distribution with mean $p$ 
and $\Binom(N,p)$ the binomial distribution with $N$ trials and success probability $p$.

Next we introduce some graph-theoretic notations that will be used throughout the paper. Let $ G = (V, E) $ be a simple undirected graph. Let $ \sfe = \sfe(G) = |E(G)| $ denote the number of edges, $ \sfv = \sfv(G) = |V(G)| $ denote the number of vertices, and $ \cc = \cc(G) $ be the number of connected components in $G$. The neighborhood of a vertex $ u $ is denoted by $ N_G(u) = \{ v \in V(G) : \{u, v\} \in E(G) \}  $.  

Two graphs $ G $ and $ G' $ are isomorphic, denoted by $ G \simeq G' $, if there exists a bijection between the vertex sets of $ G $ and $ G' $ that preserves adjacency, i.e., if there exists a bijective function $ g: V(G) \to V(G') $ such that $ \{g(u), g(v)\} \in E(G') $ if and only if $ \{u, v\} \in E(G) $. The disjoint union of two graphs $G$ and $G'$, denoted $ G + G' $, is the graph whose vertex (resp.~edge) set is the disjoint union of the vertex (resp.~edge) sets of $G$ and of $G'$. For brevity, we denote by $ k G $ to the disjoint union of $ k $ copies of $ G $.

We use the notation $ K_{n} $, $ P_{n} $, and $ C_{n} $ to denote the complete graph, path graph, and cycle graph on $ n $ vertices, respectively. Let $ K_{n,n'} $ denote the complete bipartite graph with $ n n' $ edges and $ n + n' $ vertices. Let $ S_{n} $ denote the star graph $ K_{1, n} $ on $ n+1 $ vertices.

We need two types of subgraph counts: 
Denote by $\s(H,G)$ (resp.~$ \n(H, G) $) the number of vertex (resp.~edge) induced subgraphs of $G$ that are isomorphic to $H$.\footnote{
The subgraph counts are directly related to the graph homomorphism numbers \cite[Sec 5.2]{Lovasz12}. Denote by $\mathsf{inj}(H, G) $ the number of injective homomorphisms from $H$ to $G$ and $\mathsf{ind}(H,G)$ the number of injective homomorphisms that also preserve non-adjacency. Then 
$\mathsf{ind}(H,G)=\s(H,G) \mathsf{aut}(H)$ and $\mathsf{inj}(H,G)=\n(H,G) \mathsf{aut}(H)$,
where $\mathsf{aut}(H)$ denotes the number of automorphisms (i.e. isomorphisms to itself) for $H$. }
For example, $\s(\PathThree,\; \Diamond) = 2$ and 
$\n(\PathThree,\; \Diamond) = 8$.
Let $\omega(G)$ denote the clique number, i.e., the size of the largest clique in $G$.

\section{Model} \label{sec:model}

\subsection{Subgraph sampling model}
	\label{sec:subgraph}
	
To fix notations, let $ G = (V, E) $ be a simple, undirected graph on $ N $ vertices. 
In the subgraph sampling model, we sample a set of vertices denoted by $ S \subset V $, and observe their induced subgraph, denoted by $G[S]=(S,E[S])$, where the edge set is defined as $E[S]=\{\{i,j\} \in E: i,j \in S\}$.
See \prettyref{fig:subgraph} for an illustration.
To simplify notations, we abbreviate the sampled graph $G[S]$ as $\tG$.

\begin{figure} [ht]
\centering
\begin{subfigure}[t]{0.45\textwidth}
  \centering
  \includegraphics[width=0.85\linewidth]{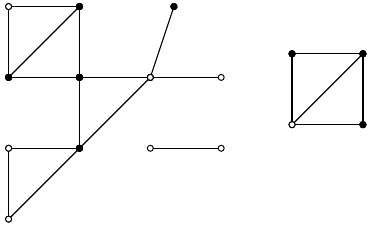}
  \caption{Parent graph $G$ with the set of sampled vertices $S$ shown in black.}
  \label{fig:fig1}
\end{subfigure}%
\hspace{0.5cm}
\begin{subfigure}[t]{0.45\textwidth}
  \centering
  \includegraphics[width=0.85\linewidth]{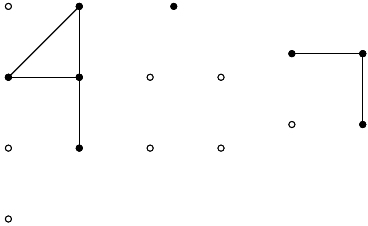}
  \caption{Subgraph induced by sampled vertices  $ \tG = G[S] $. Non-sampled vertices are shown as isolated vertices.}
  \label{fig:fig2}
\end{subfigure}
\caption{Subgraph sampling.}
\label{fig:subgraph}
\end{figure}

According to how the set $S$ of sampled vertices is generated, there are two variations of the subgraph sampling model \cite{Frank1978}:
\begin{itemize}
\item \emph{Uniform sampling}: Exactly $ n $ vertices are chosen uniformly at random without replacement from the vertex set $ V $. 
In this case, the probability of observing a subgraph isomorphic\footnote{Note that it is sufficient to describe the sampled graph up to isomorphism since the property $\cc$ we want to estimate is invariant under graph isomorphisms.} to $ H $ with $ \sfv(H) = n $ is equal to 
\begin{equation}
\pprob{ \tG \simeq H } = \frac{\s(H, G)}{\binom{N}{n}}  .
\label{eq:pmf-uniform}
\end{equation}

\item \emph{Bernoulli sampling}: Each vertex is sampled independently with probability $p$, where $ p $ is called the \emph{sampling ratio}.
Thus, the sample size $ |S|$ is distributed as $\Binom(N, p) $, and 
the probability of observing a subgraph isomorphic to $ H $ is equal to 
\begin{equation}
\pprob{ \tG \simeq H } = \s(H, G)p^{\sfv(H)}(1-p)^{\sfv(G)-\sfv(H)} .
\label{eq:pmf-bern}
\end{equation}
\end{itemize}
The relation between these two models is analogous to that between sampling without replacements and sampling with replacements.
In the sublinear sampling regime where $n \ll N$, they are nearly equivalent. For technical simplicity, we focus on the Bernoulli sampling model and we refer to $n \triangleq pN$ as the \emph{effective sample size}. Extensions to the uniform sampling model will be discussed in \prettyref{sec:uniform} of \prettyref{app:results}. 

A number of previous work on subgraph sampling is closely related with the theory of graph limits \cite{Borgs2008}, which is motivated by the so-called property testing problems in graphs \cite{Goldreich17}. According to \cite[Definition 2.11]{Borgs2008}, a graph parameter $ f $ is ``testable" if for any $ \epsilon > 0 $, there exists a sample size $ n $ such that for any graph $ G $ with at least $ n $ vertices, there is an estimator $ \widehat{f} = \widehat{f}(\tG) $ such that $ \pprob{ | f(G) - \widehat{f} | > \epsilon} < \epsilon $. 
In other words, testable properties can be estimated with sample complexity that is \emph{independent} of the size of the graph.
Examples of testable properties include the edge density $\sfe(G)/\binom{\sfv(G)}{2}$ and the density of maximum cuts $ \frac{\mathsf{MaxCut}(G)}{\sfv(G)^2} $, where $ \mathsf{MaxCut}(G) $ is the size of the maximum edge cut-set in $ G $ \cite{Goldreich1998};
however, the number of connected components $\cc(G)$ or its normalized version $ \frac{\cc(G)}{\sfv(G)} $
are not testable.\footnote{To see this, recall from \cite[Theorem 6.1(b)]{Borgs2008} an equivalent characterization of $ f $ being testable is that for any $ \epsilon > 0 $, there exists a sample size $ n $ such that for any graph $ G $ with at least $ n $ vertices, $ |f(G) - \mathbb{E}f(\tG)| < \epsilon $. This is violated for star graphs $ G = S_{N} $ as $ N \rightarrow \infty $}
Instead, our focus is to understand the dependency of sample complexity of estimating $\cc(G)$ on the graph size $N$ as well as other graph parameters. 
It turns out for certain classes of graphs, the sample complexity grows \emph{sublinearly} in $N$, which is the most interesting regime.





\subsection{Classes of graphs}

Before introducing the classes of graphs we consider in this paper, we note that, unless further structures are assumed about the parent graph, estimating many graph properties, including the number of connected components, has very high sample complexity that scales linearly with the size of the graph.
Indeed, there are two main obstacles in estimating the number of connected components in graphs, namely, \emph{high-degree vertices} and \emph{long induced cycles}. If either is allowed to be present, we will show that 
even if we sample a constant faction of the vertices,
any estimator of $\cc(G)$ has a worst-case additive error that is almost linear in the network size $N$.
Specifically,
\begin{itemize}
\item 
For any sampling ratio  $ p$ bounded away from $1 $, as long as the maximum degree is allowed to scale as $\Omega(N)$, 
even if we restrict the parent graph to be acyclic, 
the worst-case estimation error for any estimator is $\Omega(N)$.
\item 
For any sampling ratio $ p$ bounded away from $ 1/2 $, as long as the length of the induced cycles is allowed to be $\Omega(\log N)$, 
even if we restrict the parent graph to have maximum degree 2, 
the worst-case estimation error for any estimator is $\Omega(\frac{N}{\log N})$.
\end{itemize}
The precise statements follow from the minimax lower bounds in \prettyref{thm:forestlower} and \prettyref{thm:cyclelower} of \prettyref{app:results}. 
Below we provide an intuitive explanation for each scenario.

For the first claim involving large degree, consider a pair of acyclic graphs $G$ and $G'$, where $ G $ is the star graph on $ N $ vertices and $ G' $ consisting of $ N $ isolated vertices. 
Note that as long as the center vertex in $ G $ is not sampled, the sampling distributions of $ G $ and $ G' $ are identical. This implies that the total variation between the sampled graph under $ G $ and $ G' $ is at most $ p $. Since the numbers of connected components in $ G $ and $ G' $ differ by $ N-1 $, this leads to a minimax lower bound for the estimation error of $\Omega(N)$ whenever $p$ is bounded away from one.


The effect of long induced cycles is subtler. 
The key observation is that a cycle and a path (or a cycle versus two cycles) locally look exactly the same. Indeed, let $ G $ (resp.~$G'$) consists of 
$ N/(2r) $ disjoint copies of the smaller graph $H$ (resp.~$H'$), where $H$ is a cycle of length $ 2r $ and 
$H'$ consists of two disjoint cycles of length $ r $.
Both $ G $ and $ G' $ have maximum degree $ 2 $ and contain induced cycles of length at most $ 2r $. 
The local structure of $G$ and $G'$ is the same (e.g., each connected subgraph with at most $ r-1 $ vertices appears exactly $ N $ times in each graph) and the sampled versions of $H$ and $H'$ are identically distributed provided at most $ r-1 $ vertices are sampled.
Thus, we must sample at least $ r $ vertices (which occurs with probability at most $ e^{-r(1-2p)^2} $) for the distributions to be different. By a union bound, it can be shown that the total variation between the sampled graphs $ \tG $ and $ \tG' $ is $ O((N/r)e^{-r(1-2p)^2}) $. Thus, whenever the sampling ratio $p$ is bounded away from $ 1/2 $, choosing $r=\Theta(\log N)$ leads to a near-linear lower bound $\Omega(\frac{N}{\log N})$.


The difficulties caused by high-degree vertices and long induced cycles motivate us to consider classes of graphs defined by two key parameters, namely, the maximum degree $d$ and the length of the longest induced cycles $c$. 
The case of $c=2$ corresponds to forests (acyclic graphs), which have been considered by Frank \cite{Frank1978}. 
The case of $c=3$ corresponds to  \emph{chordal graphs}, i.e., graphs without induced cycle of length four or above, which is the focus of this paper.
It is well-known that various computation tasks that are intractable in the worst case, such as maximal clique and graph coloring, are easy for chordal graphs; it turns out that the chordality structure also aids in both the design and the analysis of computationally efficient estimators which provably attain the optimal sample complexity. 

\section{Main results} \label{sec:main}

This section summarizes  our main results 
in terms of the minimax risk of estimating the number of connected components over various class of graphs. As mentioned before, for ease of exposition, we focus on the Bernoulli sampling model, where each vertex is sampled independently with probability $p$. 
Similar conclusions can be obtained for the uniform sampling model upon identifying $p=n/N$, as given in \prettyref{sec:uniform}.


When $ p $ grows from $ 0 $ to $ 1 $, an increasing fraction of the graph is observed and intuitively the estimation problem becomes easier. Indeed, all forthcoming minimax rates are inversely proportional to powers of $ p $. Of particular interest is whether accurate estimation in the sublinear sampling regime, i.e., $p=o(1)$.
 The forthcoming theory will give explicit conditions on $ p $ for this to hold true. 

As mentioned in the previous section, the main class of graphs we study is the so-called \emph{chordal graphs} (see \prettyref{fig:chordal-example} for an example):
\begin{definition}
A graph $ G $ is chordal if it does not contain induced cycles of length four or above, i.e., $\s(C_k,G)=0$ for $k \geq 4$.
\end{definition}
\begin{figure} [ht]
\centering
\begin{subfigure}[t]{0.45\textwidth}
  \centering
  \includegraphics[width=0.8\linewidth]{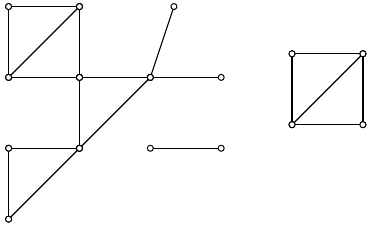}
  \caption{Chordal graph.}
  \label{fig:fig1}
\end{subfigure}%
\hspace{1cm}
\begin{subfigure}[t]{0.45\textwidth}
  \centering
  \includegraphics[width=0.8\linewidth]{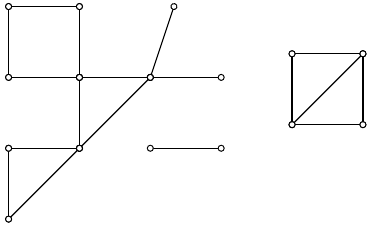}
  \caption{Non-chordal graph (containing an induced $C_4$).}
  \label{fig:fig2}
\end{subfigure}
\caption{Examples of chordal and non-chordal graphs both with three connected components.}
\label{fig:chordal-example}
\end{figure}

We emphasize that chordal graphs are allowed to have arbitrarily long cycles but no induced cycles longer than three.
The class of chordal graphs encompasses \emph{forests} and \emph{disjoint union of cliques} as special cases, the two models that were studied in Frank's original paper \cite{Frank1978}. In addition to constructing estimators that adapt to larger collections of graphs (for which forests and unions of cliques are special cases), we also provide theoretical analysis and optimality guarantees -- elements that were not considered in past work. 

Next, we characterize the rate of the minimax mean-squared error for estimating the number of connected components in a chordal graph, which turns out to depend on the number of vertices, the maximum degree, and the clique number.
The upper and lower bounds differ by at most a multiplicative factor depending only on the clique number.
To simplify the notation, henceforth we denote $q=1-p$.
\begin{theorem}[Chordal graphs]
\label{thm:chordalmain}
Let $ \calG(N,\de,\omega) $ denote the collection of all chordal graphs on $N$ vertices with maximum degree and clique number at most $ \de $ and $ \omega \geq 2$, respectively. Then
\begin{equation*}
\inf_{\cchat} \sup_{ G \in \calG(N,\de,\omega) } \Expect_G|\cchat - \cc(G)|^2 = \Theta_\omega\left(\left(\frac{N}{p^{\omega}} \vee \frac{N\de}{p^{\omega-1}} \right) \wedge N^2 \right),
\end{equation*}
where the lower bound holds provided that $p\leq p_0$ for some constant $p_0 < \frac{1}{2}$ that only depends on $\omega$.
Furthermore, if $ p \geq 1/2 $, then for any $\omega$,
\begin{equation} \label{eq:main2}
\inf_{\cchat} \sup_{ G \in \calG(N,\de,\omega) } \Expect_G|\cchat - \cc(G)|^2 \leq Nq(d+1).
\end{equation}
\end{theorem}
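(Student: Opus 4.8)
The estimator is the truncated Horvitz--Thompson plug-in built from the clique-expansion identity for chordal graphs. The first step is that identity: for every chordal $G$,
\[
\cc(G)=\sum_{j\ge 1}(-1)^{j+1}\,\s(K_j,G),
\]
the alternating sum of clique counts -- the combinatorial core of \prettyref{sec:algorithms}, following from a perfect elimination ordering (equivalently, collapsibility of the clique complex of each connected chordal graph, whose Euler characteristic is then $1$). Since a $j$-clique of $G$ appears in $\tG$ iff all its $j$ vertices are sampled, $\Expect_G[\s(K_j,\tG)]=p^{j}\s(K_j,G)$, so $\cchat_0:=\sum_{j=1}^{\omega}(-1)^{j+1}p^{-j}\s(K_j,\tG)$ is \emph{exactly} unbiased for $\cc(G)$ on $\calG(N,\de,\omega)$. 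The proposed estimator is its truncation $\cchat:=\min\{N,\max\{0,\cchat_0\}\}$: as $\cc(G)\in[0,N]$, truncation cannot increase the error, so one analyzes $\cchat_0$ and uses the truncation only to control its tails.

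The crux is an exact variance formula. Writing each indicator as $\indc{u\in S}=p+\eta_u$ with $\eta_u$ independent, mean $0$, variance $pq$, a clique $K$ contributes $\sum_{T\subseteq K}p^{-|T|}\prod_{u\in T}\eta_u$; summing over all cliques, the coefficient of $\prod_{u\in T}\eta_u$ is $c_T=\sum_{K\supseteq T,\ K\text{ clique}}(-1)^{|K|+1}$, which vanishes unless $T$ is a clique and, for a clique $T$, equals $(-1)^{|T|+1}\bigl(1-\cc(G[N_G(T)])\bigr)$ by applying the identity above to the (chordal) common-neighborhood graph $G[N_G(T)]$. Orthogonality of the monomials $\prod_{u\in T}\eta_u$ then yields
\[
\Var_G(\cchat_0)=\sum_{\emptyset\ne T\ \text{clique of}\ G}\Bigl(\tfrac qp\Bigr)^{|T|}\bigl(\cc(G[N_G(T)])-1\bigr)^2 .
\]

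It remains to bound the right side by $Nq(\de+1)$ when $p\ge\tfrac12$. Here $q/p\le 1$, so $(q/p)^{|T|}\le q/p$ geometrically suppresses the large-clique terms, and for a clique $T$ one has $|N_G(T)|\le \de-|T|+1$, hence $\bigl(\cc(G[N_G(T)])-1\bigr)^2\le(\de-|T|)^2\vee 1$. Combining these with the crude bound $\s(K_j,G)\le\frac Nj\binom{\de}{j-1}$ is far too lossy -- those extremal conditions are mutually incompatible, and the combination yields only an exponential-in-$\de$ bound -- so the real work is a counting argument using a perfect elimination ordering / clique tree of $G$: charge every clique $T$ with disconnected $G[N_G(T)]$ to its minimal vertex, and show that $\sum_T(\cc(G[N_G(T)])-1)^2$ over cliques charged to a fixed vertex $v$ is $O(\deg(v))$, the excess components of $G[N_G(T)]$ telescoping against the branches of the clique tree at $v$. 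The remaining polynomial-in-$\de$ slack (from the few cliques where $\cchat_0$ is extreme) is absorbed by the truncation, since on $\{\cchat_0<0\}$ (resp.\ $\{\cchat_0>N\}$) the error of $\cchat$ is only $\cc(G)^2$ (resp.\ $(N-\cc(G))^2$) and a tail estimate for $\cchat_0$ shows these events carry little mass; this is where the stated constant $\de+1$ finally appears.

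\medskip\noindent\textbf{Main obstacle.} The hard part is the last paragraph -- turning the clean variance identity into the \emph{sharp} constant $\de+1$. The bound must be tight enough to be non-wasteful even for the star $G=S_{N-1}$, where at $p=\tfrac12$ we have $\Var_G(\cchat_0)\sim N^2$ while the truncated estimator has risk $\sim N^2/2=Nq(\de+1)$; thus both the clique-counting bound and the truncation analysis must be executed with essentially no loss. Chordality is used twice: once (the clique identity) for unbiasedness and the variance formula, and once (perfect elimination) to show that cliques with disconnected common neighborhood are sparse.
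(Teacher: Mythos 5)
Your proposal addresses only the upper bounds. The theorem is a two\--sided minimax characterization, and the clause ``the lower bound holds provided $p\le p_0$'' is an essential half of the statement. In the paper this half is \prettyref{thm:chordallower}, built on the reduction in \prettyref{thm:mainlb} (method of two fuzzy hypotheses) together with explicit pairs of chordal graphs $H,H'$ with $\cc(H)\neq\cc(H')$ whose sampled versions are close in total variation --- either because they have matching clique counts up to size $\omega-1$, or because failing to sample a designated $(\omega-1)$\--clique makes them isomorphic (\prettyref{lmm:matching}, \prettyref{lmm:coupling}). Nothing in your plan produces a lower bound, so it cannot establish the $\Theta_\omega(\cdot)$ claim.

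On the upper bound, your exact variance identity is correct and is a genuinely different, arguably cleaner, route than the paper's: writing $\indc{u\in S}=p+\eta_u$ gives an orthogonal expansion indexed by cliques $T$, with coefficient $(-1)^{|T|+1}(1-\cc(G[N_G(T)]))$ obtained by applying the clique identity to the chordal common neighborhood, and hence $\Var(\cchat_0)=\sum_{T}(q/p)^{|T|}(\cc(G[N_G(T)])-1)^2$; this checks out on edges, stars and triangles. (The paper instead writes $\cchat=\frac1p\sum_j b_j(-q/p)^{\widehat{\sfc}_j}$ along a PEO and controls the covariances via \prettyref{lmm:orthogonal} and \prettyref{lmm:independent}.) Two steps in your plan fail, however. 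First, the charging claim that the cliques charged to a vertex $v$ contribute $O(\deg(v))$ is false: the singleton $T=\{v\}$ alone contributes $(\cc(G[N_G(v)])-1)^2$, which is $(\de-1)^2$ when $v$ is the center of a star; the correct total is $O_\omega(N\de)$ (e.g.\ via $\sum_{|T|=j}|N_G(T)|^2\le \de\,(j+1)\,\s(K_{j+1},G)=O_\omega(N\de)$), and a bound of $O_\omega(N)$ would in fact contradict the theorem's own lower\--bound term $N\de/p^{\omega-1}$. With the corrected count your identity does yield $O_\omega(N/p^{\omega}+N\de/p^{\omega-1})$, but that is precisely the step you defer as ``the real work.'' Second, truncating to $[0,N]$ cannot deliver the constant in \prettyref{eq:main2} as you describe: for a disjoint union of stars $S_\de$ at $p=\tfrac12$ your own formula gives variance $\frac{N}{\de+1}((\de-1)^2+\de)>Nq(\de+1)$ for $\de\ge 4$, while the estimator concentrates well inside $[0,N]$ so the truncation is inactive; this route gives $Nq(\de+1)$ only up to a constant factor, which suffices for the $\Theta_\omega$ rate but not for the exact inequality you claim.
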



Specializing \prettyref{thm:chordalmain} to $\omega=2$ yields the minimax rates for estimating the number of trees in forests for small sampling ratio $p$. 
The next theorem shows that the result holds verbatim even if $p$ is arbitrarily close to 1, and, consequently, shows minimax rate-optimality of the bound in \prettyref{eq:main2}. The lower bound component is proved in \prettyref{sec:forestlb} of \prettyref{app:results}.

\begin{theorem}[Forests] \label{thm:forests}
Let $ \mathcal{F}(N,\de) \triangleq \calG(N,\de,2) $ denote the collection of all forests on $N$ vertices with maximum degree at most $ \de $.
Then for all $ 0 \leq p \leq 1 $ and $1 \leq d \leq N$,
\begin{equation}
\inf_{\cchat} \sup_{ G \in \mathcal{F}(N,\de)  } \Expect_G|\cchat - \cc(G)|^2 \asymp \left(\frac{Nq}{p^2} \vee \frac{Nq\de}{p}\right) \wedge N^2.
\label{eq:main-forest}
\end{equation}
\end{theorem}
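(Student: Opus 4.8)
The plan is to reduce \prettyref{thm:forests} to the $\omega=2$ case of \prettyref{thm:chordalmain} together with \prettyref{eq:main2}, leaving as the only genuinely new work a lower bound of order $Nq\de$ that is valid for \emph{all} $p\in[0,1]$ (in particular when $p$ is bounded away from $0$, so that $q=1-p$ may be small). Since $\mathcal F(N,\de)=\calG(N,\de,2)$, the upper bound splits at $p=1/2$: for $p\le 1/2$ the estimator of \prettyref{thm:chordalmain} has worst-case risk $O\!\left(\left(\tfrac{N}{p^2}\vee\tfrac{N\de}{p}\right)\wedge N^2\right)$, which equals the claimed rate because $\tfrac12\le q\le1$; for $p\ge 1/2$, \prettyref{eq:main2} gives worst-case risk $\le Nq(\de+1)\le 2Nq\de$, which is the claimed rate since $p\asymp1$ and $Nq\de\le N\de\le N^2$ there. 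For the lower bound, when $p\le p_0$ the lower bound of \prettyref{thm:chordalmain} ($\omega=2$) already gives $\Omega\!\left(\left(\tfrac{N}{p^2}\vee\tfrac{N\de}{p}\right)\wedge N^2\right)$, matching the claimed rate as $q\asymp1$ for $p\le p_0<\tfrac12$. It thus remains to show $\inf_{\cchat}\sup_{G\in\mathcal F(N,\de)}\Expect_G|\cchat-\cc(G)|^2\gtrsim Nq\de$ for every $p$; since $Nq\de\le N^2$ and, for $p\in(p_0,1]$, the claimed rate is $\asymp Nq\de$, the maximum of this bound with the $p\le p_0$ bound closes the gap on the whole range of $p$.

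For this remaining lower bound I would use a Bayesian (averaged) construction rather than a single two-point comparison, which is too weak here: already for $\de=O(1)$ and $p$ bounded away from $0$, the natural extremal pair (all stars vs.\ all isolated vertices) has sampled versions of total variation $1-o(1)$. Assume $(\de+1)\mid N$ (padding with fixed isolated vertices, and using $\de-1$ leaves per block when $\de=N$, changes only constants) and partition the $N$ vertices into $m=N/(\de+1)$ blocks, each with a distinguished center and $\de$ leaves. Let $Z_1,\dots,Z_m\iiddistr\Bern(1/2)$ and let the random parent graph $\mathbf G$ put a star $S_\de$ on block $i$ if $Z_i=0$ and $\de+1$ isolated vertices on block $i$ if $Z_i=1$; then $\mathbf G\in\mathcal F(N,\de)$ almost surely, and $\cc(\mathbf G)=m+\de\sum_{i=1}^m Z_i$. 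Under Bernoulli$(p)$ sampling, $\tG$ is the disjoint union of its block restrictions, and the restriction to block $i$ reveals $Z_i$ if and only if the center of block $i$ and at least one of its leaves are both sampled --- otherwise both values of $Z_i$ produce the \emph{same} edgeless induced subgraph on the sampled vertices of that block. This event depends only on the sampling, has probability $p(1-q^{\de})$, and hence, conditionally on $\tG$, the $Z_i$ are independent with each $Z_i$ either determined or still uniform on $\{0,1\}$. Writing $U$ for the number of uninformative blocks, the Bayes risk under squared-error loss is therefore
\[
\Expect\!\left[\Var\!\left(\cc(\mathbf G)\mid\tG\right)\right]=\frac{\de^2}{4}\,\Expect[U]=\frac{\de^2}{4}\,m\,(q+p\,q^{\de})\;\ge\;\frac{\de^2}{4}\,m\,q\;\gtrsim\;Nq\de,
\]
using $m\asymp N/(\de+1)$ and $\de^2/(\de+1)\asymp\de$ for $\de\ge1$. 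Since the minimax risk over $\mathcal F(N,\de)$ dominates the Bayes risk of any prior supported on $\mathcal F(N,\de)$, this is exactly the bound needed.

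The routine parts are the rate bookkeeping --- $\tfrac{N}{p^2}\asymp\tfrac{Nq}{p^2}$ and $\tfrac{N\de}{p}\asymp\tfrac{Nq\de}{p}$ for $p\le\tfrac12$, $p\asymp1$ and $Nq\de\le N^2$ for $p\ge p_0$, and $q\le q+pq^{\de}\le 2q$ for $\de\ge1$ --- together with checking the conditional-independence and uninformative-block claims above. I do not expect a serious obstacle; the one point worth flagging is the failure of the plain two-point method, which is precisely why the proof must hide the signal as $\Theta(mq)$ independent bits, each contributing posterior variance $\tfrac14\de^2$, and read off the block-wise posterior variance above instead of comparing two fixed graphs.
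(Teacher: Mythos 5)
Your proposal is correct, but the lower-bound argument takes a genuinely different route from the paper's. The paper proves the lower bound in \prettyref{thm:forestlower} with a single construction valid for \emph{all} $p\in[0,1]$: it takes the one-parameter family $\calF_0=\{(N-m(\de+1))S_0+mS_\de: 0\le m\le M\}$, shows by an explicit multinomial factorization that $T=T_1+\cdots+T_\de\sim\Binom(m,p')$ with $p'=p(1-q^{\de})$ is a sufficient statistic for $\tG$, and then invokes a two-point (Le~Cam/Hellinger) bound for the binomial experiment (\prettyref{lmm:binom-lower}); since $p'\le p\wedge(p^2\de)$ and $1-p'\ge q$, this one reduction simultaneously delivers both the $\frac{Nq}{p^2}$ and the $\frac{Nq\de}{p}$ terms. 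You instead split the range of $p$, importing the small-$p$ regime from \prettyref{thm:chordalmain} with $\omega=2$, and for $p$ bounded away from $0$ you place \iid $\Bern(1/2)$ labels on $N/(\de+1)$ star-or-empty blocks and compute the Bayes risk exactly as the expected posterior variance $\frac{\de^2}{4}\,m\,(q+pq^{\de})\gtrsim Nq\de$. Your computation is sound: a block's restriction is informative exactly when its center and at least one leaf are sampled, the block posteriors are conditionally independent, and the Bayes risk lower-bounds the minimax risk (with the usual proviso, which only helps a lower bound, that you may grant the estimator the labelled sampled graph). What your route buys is self-containedness for the $Nq\de$ term --- no Hellinger bound between binomials, no sufficiency argument --- and an exact, transparent risk identity; what the paper's route buys is a single argument covering all $p$ that also produces the $\frac{Nq}{p^2}$ term without appeal to the chordal lower bound. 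Your observation that the naive extremal two-point comparison (all stars vs.\ all isolated vertices) fails is accurate, but note the paper does salvage a two-point method by first reducing to the binomial parameter $m$ and comparing two \emph{nearby} values of $m$; that step is essentially equivalent in statistical content to your product prior. The remaining bookkeeping in your write-up (the $p\le 1/2$ versus $p\ge 1/2$ split for the upper bound, divisibility padding, and the $\de=N$ edge case) is routine and checks out.
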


The upper bounds in the previous results are achieved by unbiased estimators. 
As \prettyref{eq:main2} shows, they work well even when the clique number $ \omega $ grow with $ N $, provided we sample more than half of the vertices; however, if the sample ratio $p$ is below $\frac{1}{2}$, especially in the sublinear regime of $p=o(1)$ that we are interested in, the variance is exponentially large. To deal with large $d $ and $\omega$, we must give up unbiasedness to achieve a good bias-variance tradeoff. Such biased estimators, obtained using the smoothing technique introduced in \cite{Wu2016-2}, lead to better performance as quantified in the following theorem. The proofs of these bounds are given in \prettyref{thm:chordal_smooth} and \prettyref{thm:clique_smooth}.

\begin{theorem}[Chordal graphs]
\label{thm:chordal-unbounded}
Let $ \calG(N, \de) $ denote the collection of all chordal graphs on $ N $ vertices with maximum degree at most $ \de $.
Then, for any $ p < 1/2 $,
\begin{equation*}
\inf_{\cchat} \sup_{ G \in \calG(N, \de) } \Expect_G|\cchat-\cc(G)|^2 \lesssim N^2\left(N/\de^2\right)^{-\frac{p}{2-3p}}.
\end{equation*}
\end{theorem}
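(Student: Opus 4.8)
The plan is to reduce the problem to controlling the number of connected components via a localized combinatorial identity, and then to apply the smoothing (polynomial-approximation) technique of \cite{Wu2016-2} to trade bias against variance. First I would recall the combinatorial fact (to be established in \prettyref{sec:algorithms}) that for a chordal graph $G$ one can write $\cc(G)$ exactly as an alternating sum of induced subgraph counts of small connected chordal graphs, weighted by coefficients determined by the clique/elimination structure; schematically $\cc(G) = \sum_{H} w(H)\, \s(H,G)$ where $H$ ranges over connected chordal graphs and the weights come from an inclusion–exclusion over a perfect elimination ordering. Each term $\s(H,G)$ is estimable from the sample, but under Bernoulli$(p)$ sampling the natural unbiased estimator of $\s(H,G)$ carries a factor $p^{-\sfv(H)}$, so the variance blows up geometrically in $\sfv(H)$ — this is exactly the source of the exponential-in-$d,\omega$ blowup flagged right before the theorem. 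The remedy is to only sum the identity up to subgraphs on $L$ vertices for a truncation level $L$ to be optimized, and to replace the tail by a smoothed surrogate.

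Next I would set up the bias–variance decomposition. Truncating the identity at level $L$ incurs a bias that, using that $G$ has maximum degree $\de$ (so the number of connected subgraphs on $k$ vertices through any fixed vertex is at most $(e\de)^{k}$ or similar), is bounded by roughly $N (C\de)^{L}$ times the magnitude of the residual coefficients; after the smoothing correction the bias is reduced to something like $N\,\delta^{L}$ for a damping factor $\delta\in(0,1)$ that the smoothing kernel introduces. The variance of the estimator is dominated by the highest-order retained term, of order $N^2 p^{-2L}$ up to $\de$-dependent factors (or $N\de^{?}p^{-2L}$ more carefully, but $N^2$ is the crude envelope that ultimately matches the $N^2(\cdot)$ form of the claim). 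Balancing a bias of order $N(C\de/?)^{L}$ against a standard deviation of order $N p^{-L}$ — more precisely, choosing $L$ so that the two contributions to the squared error are comparable — yields $L \asymp \frac{\log(N/\de^2)}{\log(1/p) + c}$ for the appropriate constant, and substituting back gives the exponent $-\frac{p}{2-3p}$ after the elementary optimization (the shape $2-3p$ in the denominator comes from how the smoothing radius, which controls $\delta$, must scale with $p$ to keep the variance term of order $p^{-2L}$ under control — this is the same phenomenon as in the functional-estimation literature where the optimal polynomial degree scales like $\log n / \log(1/p)$).

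The key tools I would invoke are: (i) the chordal-graph identity expressing $\cc$ as a signed combination of small induced-subgraph counts, together with a bound on how many connected chordal subgraphs on $k$ vertices a degree-$\de$ graph can have (a greedy/BFS counting argument giving a bound of the form $N(e\de)^{k}$); (ii) the smoothing/best-polynomial-approximation machinery of \cite{Wu2016-2}, adapted so that the "variable" is the indicator of a vertex being sampled and the smoothed estimator has controlled variance $p^{-2L}$ while the approximation error decays geometrically; and (iii) the trivial bound $|\cchat-\cc(G)|\le N$ to cap the risk at $N^2$ in the regime where the optimized bound would otherwise exceed it. I would conclude by citing \prettyref{thm:chordal_smooth} and \prettyref{thm:clique_smooth} for the two component estimators and combining them.

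The main obstacle I anticipate is item (i) combined with the correct form of the smoothing: writing down a \emph{clean} finite-order identity for $\cc$ on chordal graphs whose truncation residual is genuinely controlled by a geometric factor, and then choosing the smoothing kernel so that after correction the residual coefficients decay fast enough to make the bias term $N\delta^{L}$ rather than merely $N(C\de)^{L}$ without $\delta<1$. Getting the dependence on $\de$ to enter as $\de^{2}$ inside $(N/\de^2)^{-p/(2-3p)}$ (rather than $\de$ or $\de^{\omega}$) requires tracking the $\de$-factors in the variance of the top-order term carefully — the variance of estimating $\s(H,G)$ for $H$ on $L$ vertices contributes a factor that is quadratic in the relevant degree-based count, which is where the square on $\de$ originates. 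Everything else — the bias–variance balance, the logarithmic choice of $L$, and the capping at $N^2$ — is routine once the identity and the smoothing estimator are in place.
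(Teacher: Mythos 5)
Your high-level strategy is indeed the paper's: smooth the alternating clique-count identity $\cc(G)=\sum_{i\ge1}(-1)^{i+1}\s(K_i,G)$ with a Poisson truncation level and balance bias against variance, as carried out in \prettyref{thm:chordal_smooth} (note that \prettyref{thm:clique_smooth} is only needed for \prettyref{thm:clique}, not here, and that the clique number is eliminated via $\omega\le\de+1$). However, two of your quantitative steps would fail as written, and the devices that repair them are exactly the lemmas you omit. First, the bias. Hard truncation of the alternating clique sum at order $L$ does \emph{not} leave a geometrically small residual: already for a single $K_\omega$ the tail $\sum_{i>L}(-1)^{i+1}\binom{\omega}{i}$ has magnitude $\binom{\omega-1}{L}$, so no counting bound of the form ``$N(C\de)^{L}$ times residual coefficients'' yields $N\delta^{L}$. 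The paper instead works with the perfect-elimination-ordering form $\cchat=\frac1p\sum_j(-q/p)^{\widetilde{\sfc}_j}$ (\prettyref{lmm:ccpeo}), truncates on the event $\widetilde{\sfc}_j\le L$, and uses \prettyref{lmm:permutation} to re-index the sum by the parent graph's PEO; the bias per vertex then becomes $q^{\sfc_j}\,\Expect_L\bigl[\binom{\sfc_j-1}{L}(-1)^{L+1}\bigr]=-q^{\sfc_j}e^{-\lambda}L_{\sfc_j-1}(\lambda)$, and the Laguerre bound $|L_m(\lambda)|\le e^{\lambda/2}$ gives $|\mathrm{bias}|\le Ne^{-\lambda/2}$. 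It is this cancellation under the Poisson average, not a subgraph-counting argument, that makes the bias small.

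Second, the variance. Your envelope $N^2p^{-2L}$ cannot be the operative bound: it is at least $N^2$ for every $L\ge0$, so the resulting risk bound would never beat the trivial $N^2$ cap. The actual variance is \emph{linear} in $N$: the estimator is a sum of $N$ terms indexed by the parent PEO, each with variance at most $p\,e^{2\lambda(q/p-1)}$, and the number of correlated pairs is at most $N\de\omega$ by the dependency count of \prettyref{lmm:independent}, giving $\Var[\cchat_L]\le\frac{N(1+\de\omega)}{p}e^{2\lambda(q/p-1)}$. The $\de^2$ in the rate is therefore just $\de\omega\le\de(\de+1)$ from this dependency bound, not a ``quadratic degree-based count'' inside the variance of a single subgraph statistic, and the ratio $N/\de^2$ in the final bound is precisely the ratio of the $N^2$ prefactor of the squared bias to the $N\de^2$ prefactor of the variance; equating $N^2e^{-\lambda}$ with $\frac{N(1+\de\omega)}{p}e^{2\lambda(q/p-1)}$ is what forces $\lambda=\frac{p}{2-3p}\log\frac{Np}{1+\de\omega}$ and produces the exponent $\frac{p}{2-3p}$. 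Without the PEO representation, the permutation lemma, the Laguerre bias identity, and the dependency-graph variance count, the balance you describe does not close.
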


Finally, for the special case of graphs consisting of disjoint union of cliques, as the following theorem shows,
there are enough structures so that we no longer need to impose any condition on the maximal degree. 
Similar to \prettyref{thm:chordal-unbounded}, the achievable scheme is a biased estimator, significantly improving the unbiased estimator in \cite{Goodman1949,Frank1978} which has exponentially large variance.
\begin{theorem}[Cliques]
\label{thm:clique}
Let $ \calC(N) $ denote the collection of all graphs on $ N $ vertices consisting of disjoint unions of cliques. Then, for any $ p < 1/2 $,
\begin{equation*}
\inf_{\cchat} \sup_{ G \in \calC(N)  } \Expect_G|\cchat - \cc(G)|^2 \leq N^2 (N/4)^{-\frac{p}{2-3p}}.
\end{equation*}
\end{theorem}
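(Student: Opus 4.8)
The plan is to reduce the estimation problem over $\calC(N)$ to a one-dimensional problem, one per connected component, and then construct a \emph{biased} linear estimator by the polynomial-smoothing technique of \cite{Wu2016-2}. Write a parent graph $G\in\calC(N)$ as a disjoint union of $m_j$ cliques of size $j$, so $\cc(G)=\sum_{j\ge1}m_j$ and $\sum_{j\ge1}jm_j=N$. Under Bernoulli$(p)$ sampling the clique $K_j$ becomes $K_{B_j}$ with $B_j\sim\Binom(j,p)$, and the $B_j$'s are independent across components. Restrict to linear estimators $\cchat=\sum_{k\ge1}\phi(k)\nu_k(\tG)$, where $\nu_k(\tG)$ is the number of size-$k$ components of $\tG$ and $\phi(0):=0$ (a component sampled away is not observed). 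Then $\cchat=\sum_j\sum_{i=1}^{m_j}\phi(B_j^{(i)})$ is a sum of independent terms, so $\Expect_G\cchat=\sum_j m_j g(j)$ and $\Var_G\cchat=\sum_j m_j\Var\phi(B_j)$ with $g(j):=\Expect\phi(B_j)$. Using $\sum_j jm_j=N$, the worst case over $\calC(N)$ gives bias at most $N\sup_{j\ge1}|g(j)-1|/j$ and variance at most $N\sup_{k\ge1}\phi(k)^2$ (a bound one can sharpen by keeping the $1/j$ factor). So the task is to choose a single $\phi$, vanishing at $0$, that makes $g(j)=\Expect\phi(\Binom(j,p))$ uniformly close to $1$ while keeping $\sup_k|\phi(k)|$ small.

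Inverting the (lower-triangular) correspondence $\phi\mapsto g$ under $\phi(0)=0$, $g\equiv1$ yields the unique exactly-unbiased solution $\phi^\star(k)=1-(-q/p)^k$ -- the Goodman--Frank estimator -- using the identity $\Expect(-q/p)^{\Binom(j,p)}=(q+p(-q/p))^j=0$ for $j\ge1$. For $p<1/2$ one has $q/p>1$, so $|\phi^\star(k)|$ grows geometrically and $\sup_{k\le\omega}\phi^\star(k)^2\asymp(q/p)^{2\omega}$; since $\calC(N)$ contains cliques of size up to $N$, the variance of $\phi^\star$ is exponential in $N$. The remedy is to pick a truncation level $L$ and set $\phi(k)=1$ for $k>L$ (plug-in on large sampled components), while on $\{0,1,\dots,L\}$ reading $\phi$ off a degree-$L$ polynomial chosen, in the manner of \cite{Wu2016-2}, so that $g(j)$ stays within $\varepsilon_L$ of $1$ over the relevant range of clique sizes with $\sup_{k\le L}|\phi(k)|\le M_L$. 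This is a Chebyshev / best-polynomial-approximation problem (the binomial transform is diagonal with factors $p^r$ in the falling-factorial basis, which is what turns coefficient control into a bound on $M_L$), and it produces $\varepsilon_L$ decaying and $M_L$ growing geometrically in $L$, at rates depending only on $p$.

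Granting such a construction, the error analysis splits by component size. Cliques in the polynomial design range contribute bias at most $\varepsilon_L$ each, hence $O(N\varepsilon_L)$ in aggregate. Cliques of size much larger than $L/p$ are handled by a Chernoff bound: then $B_j>L$ with overwhelming probability, $\phi(B_j)=1$, and $g(j)\approx1$; and since \emph{all} components of size $>L$ number at most $N/L$, their total contribution to the bias is $O(NM_L/L)$ in the worst case. The delicate regime is the intermediate one, $L\lesssim j\lesssim L/p$, where $B_j$ still lands in $\{1,\dots,L\}$ with non-negligible probability so $g(j)$ is not automatically near $1$; this I would control by combining the quantitative magnitude bound for the Chebyshev polynomial with binomial tail estimates. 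The upshot is $\mathrm{bias}^2\lesssim N^2(\varepsilon_L\vee M_L/L)^2$ and $\Var\lesssim NM_L^2$, hence $\mathrm{MSE}\lesssim N^2(\varepsilon_L\vee M_L/L)^2+NM_L^2$. Taking $L\asymp\log N$ with a $p$-dependent constant equalizes the two terms, and plugging in the explicit rates for $\varepsilon_L,M_L$ yields $\mathrm{MSE}\lesssim N^2(N/4)^{-p/(2-3p)}$; the exponent $p/(2-3p)$ and the constant $4$ are exactly what the Chebyshev computation produces, and as a check, letting $p\uparrow1/2$ sends $L\to\infty$, needs no truncation, and degenerates to the $O(N)$ bound achieved by $\phi^\star$ (for which $\phi^\star(k)\in\{0,2\}$, $\sup_k\phi^\star(k)^2=4$).

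The main obstacle is the approximation-theoretic core: constructing the degree-$L$ polynomial and establishing the simultaneous two-sided control -- small $|g(j)-1|$ for \emph{all} $j\ge1$ (not only on the design interval) together with small $\sup_k|\phi(k)|$ -- and tracking constants carefully enough that the exponent is exactly $p/(2-3p)$. The intermediate clique sizes, too large for the polynomial guarantee and too small for concentration, are where the real work lies; the reduction to the per-component problem, the use of independence, and the $\sum_{j>L}m_j\le N/L$ bookkeeping are routine. I expect this to be essentially the $\de=2$ specialization of \prettyref{thm:chordal-unbounded}: it is the disjoint-union-of-cliques structure that makes each component contribute an exact one-parameter $\Binom(j,p)$ observation, which is precisely what removes the dependence on the maximum degree and leaves only the absolute constant $4$.
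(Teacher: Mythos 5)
Your reduction to independent per-component binomials, the identification of the unbiased weights $\phi^\star(k)=1-(-q/p)^k$, and the bias--variance bookkeeping $\mathrm{MSE}\lesssim N^2\cdot(\text{bias per vertex})^2+N\sup_k\phi(k)^2$ all match the paper. But the core of the argument --- constructing the smoothing weights and proving a bias bound that is uniform over \emph{all} clique sizes $j\ge 1$ --- is exactly what you leave as a black box, and you correctly flag that your hard-truncation-plus-Chebyshev design breaks down in the intermediate regime $L\lesssim j\lesssim L/p$. That is a genuine gap, not a routine verification: with a deterministic cutoff and an ad hoc degree-$L$ polynomial there is no obvious way to certify $|g(j)-1|$ simultaneously for small, intermediate, and large $j$, and no reason the constants would come out as $p/(2-3p)$ and $4$.

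The paper closes this gap with a different (and cleaner) device: \emph{random} Poisson truncation rather than a hard cutoff plus best polynomial approximation. One takes $L\sim\Poisson(\lambda)$ with $\lambda=\frac{p}{2-3p}\log(N/4)$ and uses the weights $\phi_L(k)=1-(-q/p)^k\,\prob{L\ge k}$, i.e.
\begin{equation*}
\cctilde_L=\cc(\tG)-\sum_{r\ge1}\pth{-\frac{q}{p}}^r\prob{L\ge r}\,\cctilde_r .
\end{equation*}
The point is that the per-component bias then has an exact closed form: using $\sum_{r=i+1}^{N}\binom{N_k}{r}(-1)^r=(-1)^{i+1}\binom{N_k-1}{i}$ one gets $\Expect[\cc(G)-\cctilde_L]=\sum_k q^{N_k}\,\Expect_L\qth{\binom{N_k-1}{L}(-1)^{L+1}}=-\sum_k q^{N_k}e^{-\lambda}L_{N_k-1}(\lambda)$, and the uniform Laguerre bound $|L_m(\lambda)|\le e^{\lambda/2}$ controls every component size at once --- there is no intermediate regime to treat separately. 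The variance side is equally soft: for $p<1/2$, $\prob{L\ge r}(q/p)^r\le\Expect_L[(q/p)^L]=e^{\lambda(q/p-1)}$, so $\sup_k|\phi_L(k)|\le 1+e^{\lambda(q/p-1)}$. Equalizing $Ne^{-\lambda}$ and $4e^{2\lambda(q/p-1)}$ gives precisely the exponent $\frac{p}{2-3p}$ and the constant $4$ you were hoping to extract from a Chebyshev computation. So your plan is salvageable, but you should replace the approximation-theoretic step you ``grant'' with this Poissonized smoothing; as stated, your proof is incomplete at its central step. (Also note the paper proves the clique case directly via \prettyref{thm:clique_smooth} rather than as a specialization of \prettyref{thm:chordal-unbounded}, which would give a slightly weaker constant.)
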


Alternatively, the above results are summarized in \prettyref{tab:sampling-complexity} in terms of the \emph{sample complexity}, i.e., the minimum sample size that allows an estimator $\cc(G)$ within an additive error of $\epsilon N$ with probability, say, at least 0.99, uniformly for all graphs in a given class. Here the sample size is understood as the average number of sampled vertices $n = p N$. We have the following characterization:

\begin{table}[h]
\centering
\bgroup
\def\arraystretch{2}
\begin{threeparttable}
\begin{tabular}{c| c c c}
\hline
\hline
Graph & Sample complexity $n$ & \\ [0.5ex]
\hline
\hline
Chordal & $ \Theta_{\omega}\pth{\max\sth{ N^{\frac{\omega-2}{\omega-1}} {\de}^{\frac{1}{\omega-1}}\epsilon^{-\frac{2}{\omega-1}}, \; N^{\frac{\omega-1}{\omega}} \epsilon^{-\frac{2}{\omega}} } }  $ & \\
\hline
Forest & $ \Theta\pth{\max\sth{ \frac{\de}{\epsilon^2}, \; \frac{\sqrt{N}}{\epsilon} }} $ & \\
\hline
Cliques & $ \Theta\pth{\frac{N}{\log N} \log \frac{1}{\epsilon}   }  $, \quad $\epsilon \geq N^{-1/2+\Omega(1)}$ \tnote{*} & \\
\hline
\end{tabular}
\vspace{4mm}
\begin{tablenotes}\footnotesize
\item[*] The lower bound part of this statement follows from \cite[Section 3]{WY16-distinct}, which shows the optimality of \prettyref{thm:clique}.
\end{tablenotes}
\end{threeparttable}
\caption{Sample complexity for various classes of graphs}
\label{tab:sampling-complexity}
\egroup
\end{table}

A consequence of \prettyref{thm:forests} is that if the effective sample size $ n $ scales as $ O(\max(\sqrt{N}, \de)) $, for the class of forests $ \mathcal{F}(N, \de)  $ 
the worse-case estimation error for any estimator is $ \Omega(N) $, which is within a constant factor to the trivial error bound when no samples are available. Conversely, 
if $n \gg \max(\sqrt{N}, \de)$, which is sublinear in $N$ as long as the maximal degree satisfies $d=o(N)$, then it is possible to achieve a non-trivial estimation error of $o(N)$. More generally for chordal graphs, \prettyref{thm:chordalmain} implies that if $ n = O(\max(N^{\frac{\omega-1}{\omega}}, d^{\frac{1}{\omega-1}} N^{\frac{\omega-2}{\omega-1}})) $, the worse-case estimation error in $ \calG(N, \de, \omega)  $ for any estimator is at least $ \Omega_{\omega}(N) $, 



\section{Algorithms and performance guarantees} \label{sec:algorithms}

In this section we propose estimators which provably achieve the upper bounds presented in \prettyref{sec:main} for the Bernoulli sampling model. In \prettyref{sec:chordal-property}, we highlight some combinatorial properties and characterizations of chordal graphs that underpin both the construction and the analysis of the estimators in \prettyref{sec:estimators}. 
The special case of disjoint unions of cliques is treated in \prettyref{sec:cliques}, where the estimator of Frank \cite{Frank1978} is recovered and further improved.
Analogous results for the uniform sampling model are given in \prettyref{sec:uniform} of \prettyref{app:results}. Finally, in \prettyref{sec:non-chordal}, we discuss a heuristic to generalize the methodology to non-chordal graphs.

\subsection{Combinatorial properties of chordal graphs}\label{sec:chordal-property}

In this subsection we discuss the relevant combinatorial properties of chordal graphs which aid in the design and analysis of our estimators.
We start by introducing a notion of vertex elimination ordering.

\begin{definition} \label{def:peo}
A perfect elimination ordering (PEO) of a graph $ G $ on $ N $ vertices is a vertex labelling $ \{ v_1, v_2, \dots, v_N \} $ such that, for each $ j $, $ N_G(v_j) \cap \{v_1,...,v_{j-1} \} $ is a clique.
\end{definition}

\begin{figure} [ht]
  \centering
  \includegraphics[width=0.4\linewidth]{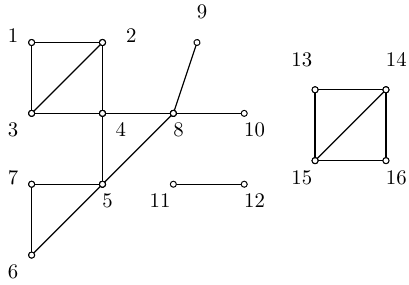}
  \caption{A chordal graph $ G $ with PEO labelled. 
	In this example, $ \cc(G) =  3 =  16 - 19 + 6 = \s(K_1, G) - \s(K_2, G) + \s(K_3, G) $.}
  \label{fig:peo}
\end{figure}

In other words, if one eliminates the vertices sequentially according to a PEO starting from the last vertex, at each step, the neighborhood of the vertex to be eliminated forms a clique; see \prettyref{fig:peo} for an example. A classical result of Dirac asserts that the existence of a PEO is in fact the defining property of chordal graphs (cf.~e.g.,~\cite[Theorem 5.3.17]{west-book}).

\begin{theorem} \label{thm:peo}
A graph is chordal if and only if it admits a PEO.
\end{theorem}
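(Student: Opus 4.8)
The plan is to prove the two implications separately, with the forward direction (chordal $\Rightarrow$ PEO) being the substantial one.

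For the easy direction, suppose $G$ admits a PEO $v_1,\dots,v_N$ as in \prettyref{def:peo}, and suppose toward a contradiction that $G$ contains an induced cycle $C$ with $|V(C)|\geq 4$. Let $v_i$ be the vertex of $C$ with the largest index and let $v_a,v_b$ be its two neighbors along $C$. Since $v_i$ has the largest index among vertices of $C$, we have $a,b<i$, so $v_a,v_b\in N_G(v_i)\cap\{v_1,\dots,v_{i-1}\}$, which is a clique by the PEO property; hence $v_a\sim v_b$. But in an induced cycle of length at least four the two $C$-neighbors of any vertex are non-adjacent — a contradiction. Thus $\s(C_k,G)=0$ for all $k\geq 4$, i.e.\ $G$ is chordal.

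For the converse, I would first record the trivial fact that every induced subgraph of a chordal graph is chordal (an induced cycle of an induced subgraph is an induced cycle of $G$). The key structural claim is that every chordal graph on at least one vertex has a \emph{simplicial} vertex, i.e.\ a vertex $v$ with $N_G(v)$ a clique, and more precisely that a chordal graph which is not complete has two non-adjacent simplicial vertices. Granting this, a PEO is built greedily: pick a simplicial vertex of $G$, label it $v_N$, and recurse on the (still chordal) graph $G-v_N$ to order $v_1,\dots,v_{N-1}$. By construction $N_G(v_j)\cap\{v_1,\dots,v_{j-1}\}=N_{G[\{v_1,\dots,v_j\}]}(v_j)$ is a clique for every $j$, so $v_1,\dots,v_N$ is a PEO, and \prettyref{thm:peo} follows.

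It remains to establish the simplicial-vertex claim, which I would prove by induction on $|V(G)|$. The case $|V(G)|=1$ is trivial, and if $G$ is complete every vertex is simplicial. Otherwise fix non-adjacent $a,b$, let $S$ be a minimal vertex set separating $a$ from $b$, let $G_a,G_b$ be the components of $G-S$ containing $a,b$, and set $A=V(G_a)$, $B=V(G_b)$. The crucial sub-step is that $S$ is a clique: if $x,y\in S$ were non-adjacent, then minimality of $S$ forces each of $x,y$ to have a neighbor in both $A$ and $B$, so one finds an induced $x$–$y$ path with interior in $A$ and another with interior in $B$, and concatenating them yields an induced cycle of length $\geq 4$, contradicting chordality. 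Now apply the induction hypothesis to the proper induced subgraph $G[A\cup S]$ (proper since $B\neq\emptyset$): if it is complete, any vertex of $A$ is simplicial in it; otherwise it has two non-adjacent simplicial vertices, of which at most one lies in the clique $S$, so one lies in $A$. Either way $G[A\cup S]$ has a simplicial vertex $v\in A$; since $A$ is a union of components of $G-S$ we have $N_G(v)\subseteq A\cup S$, whence $N_G(v)=N_{G[A\cup S]}(v)$ is a clique and $v$ is simplicial in $G$. The symmetric argument on $B$ produces a simplicial $w\in B$, and $v\not\sim w$ since they lie in different components of $G-S$; this gives two non-adjacent simplicial vertices. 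The main obstacle is precisely this lemma: selecting the right inductive object (a minimal $a$–$b$ separator), proving such a separator must be a clique in a chordal graph, and checking that a simplicial vertex of the "side" $G[A\cup S]$ stays simplicial in all of $G$. The easy direction and the greedy extraction of a PEO from simplicial vertices are routine once the lemma is in hand.
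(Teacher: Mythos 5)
Your proof is correct and complete. Note that the paper does not prove \prettyref{thm:peo} at all---it cites it as a classical result of Dirac (referring to West, Theorem 5.3.17)---and your argument (the easy direction via the highest-indexed vertex of an induced cycle, and the converse via the lemma that every non-complete chordal graph has two non-adjacent simplicial vertices, established by induction using the fact that a minimal separator in a chordal graph is a clique) is precisely the standard textbook proof the paper is pointing to, so you have simply supplied the omitted argument rather than found a different route.
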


In general a PEO of a chordal graph is not unique; however, it turns out that the size of each neighborhood in the vertex elimination process is unique up to permutation, a fact that we will exploit later on.
 The next lemma makes this claim precise. For brevity, we defer its proof to \prettyref{app:proofs}.

\begin{lemma} \label{lmm:unique}
Let $ \{v_1, \dots, v_N\} $ and $ \{v'_1, \dots, v'_N\} $ be two PEOs of a chordal graph $ G $. Let $ \sfc_j $ and $ \sfc'_j $ denote the cardinalities of $ N_G(\vertex_{j})\cap \{\vertex_{1},\dots,v_{j-1}\} $ and $ N_G(\vertex'_{j}) \cap \{\vertex'_{1},\dots,v'_{j-1}\} $, respectively. 
Then there is a bijection $ \sigma : [N] \to [N] $ such that $ \sfc_{\sigma(j)} = \sfc'_j $ for all $ j $.
\end{lemma}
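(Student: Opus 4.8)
The plan is to show that the multiset $\{\sfc_j : j \in [N]\}$ depends only on $G$ and not on the choice of PEO, by relating it to an intrinsic, ordering-free invariant of $G$. The natural candidate is the collection of induced subgraph counts $\s(K_{k+1}, G)$ for $k \geq 0$: I claim that for every $k \geq 0$,
\begin{equation*}
\#\{ j \in [N] : \sfc_j = k \} = \sum_{m \geq k} (-1)^{m-k} \binom{m}{k} \s(K_{m+1}, G),
\end{equation*}
or, equivalently and more usefully, that the number of $(k+1)$-cliques can be recovered from the $\sfc_j$'s via $\s(K_{k+1}, G) = \sum_{j} \binom{\sfc_j}{k}$. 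Since the right-hand side of this latter identity is manifestly independent of the PEO (it is a property of $G$ alone), and since the generating polynomial $\sum_j \binom{x}{\sfc_j}$ determines the multiset $\{\sfc_j\}$ (the counts $\#\{j : \sfc_j = k\}$ can be extracted by Möbius-type inversion on the binomial coefficients, since $\binom{x}{k}$ form a basis), the two PEOs must yield the same multiset, which is exactly the asserted bijection.

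First I would fix one PEO $v_1, \dots, v_N$ and prove the key identity $\s(K_{k+1}, G) = \sum_{j=1}^N \binom{\sfc_j}{k}$. The proof is a bijective/counting argument: every $(k+1)$-clique $Q$ in $G$ has a unique \emph{last} vertex with respect to the PEO, say $v_j$; then $Q \setminus \{v_j\} \subseteq N_G(v_j) \cap \{v_1, \dots, v_{j-1}\}$, which is a clique of size $\sfc_j$ by the defining property of a PEO. Conversely, for each $j$, any $k$-subset of the $\sfc_j$-element set $N_G(v_j) \cap \{v_1, \dots, v_{j-1}\}$ together with $v_j$ forms a $(k+1)$-clique (since that neighborhood set is a clique and all its vertices precede $v_j$). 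This sets up a bijection between $(k+1)$-cliques of $G$ and pairs $(j, T)$ with $T$ a $k$-subset of $N_G(v_j) \cap \{v_1,\dots,v_{j-1}\}$, giving the count $\sum_j \binom{\sfc_j}{k}$. (For $k=0$ this just says $\s(K_1,G) = N$, consistent with $\binom{\sfc_j}{0}=1$.)

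Next I would run the same argument for the second PEO to get $\s(K_{k+1}, G) = \sum_j \binom{\sfc'_j}{k}$ for all $k \geq 0$, so that $\sum_j \binom{\sfc_j}{k} = \sum_j \binom{\sfc'_j}{k}$ for every $k$. Finally I would invoke the elementary fact that a finite multiset of nonnegative integers is determined by the sequence of its "binomial moments" $\big(\sum_j \binom{\sfc_j}{k}\big)_{k \geq 0}$: proceeding by induction on $k$, the value $\#\{j : \sfc_j = k\}$ is determined once $\#\{j : \sfc_j = \ell\}$ is known for all $\ell < k$ and the $k$-th binomial moment is known (equivalently, change basis from $\{\binom{x}{k}\}$ to $\{x^k\}$ and use that power sums determine a multiset, or just note the linear system is upper-triangular with unit diagonal). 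Hence $\#\{j : \sfc_j = k\} = \#\{j : \sfc'_j = k\}$ for all $k$, which yields the desired bijection between $\{\sfc_j : j \in [N]\}$ and $\{\sfc'_j : j \in [N]\}$.

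The main obstacle is getting the bookkeeping in the clique-counting bijection exactly right — in particular verifying that the "last vertex" map is well-defined and that the neighborhood-back-set of that last vertex is precisely a clique of the right size, which is where Definition~\ref{def:peo} is used crucially. Everything after that (the inversion step recovering the multiset from its binomial moments) is routine linear algebra over the integers. As a sanity check, note that summing the identity with alternating signs recovers the inclusion-exclusion formula $\cc(G) = \sum_{k \geq 0} (-1)^k \s(K_{k+1}, G)$ illustrated in Figure~\ref{fig:peo}, since $\sum_{k \geq 0} (-1)^k \binom{\sfc_j}{k} = \indc{\sfc_j = 0}$ and the vertices with $\sfc_j = 0$ are exactly one per connected component.
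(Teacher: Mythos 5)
Your proof is correct, but it takes a genuinely different route from the paper's. The paper disposes of this lemma in two lines by quoting the known factorization of the chromatic polynomial of a chordal graph, $\chi(G;x)=(x-\sfc_1)\cdots(x-\sfc_N)$ for any PEO (citing West, Theorem 5.3.26), and then appealing to the uniqueness of $\chi(G;\cdot)$ and of its roots. You instead prove the ordering-free identity $\s(K_{k+1},G)=\sum_{j}\binom{\sfc_j}{k}$ via the ``last vertex of each clique'' bijection and then recover the multiset $\{\sfc_j\}$ from its binomial moments by triangularity of the system $\bigl(\binom{m}{k}\bigr)_{m\geq k}$. Both steps are sound; your clique-count identity is exactly equation \prettyref{eq:peo-complete-count}, which the paper proves anyway in \prettyref{lmm:chordal} and uses throughout the analysis of the estimator, so your argument has the merit of being self-contained and of unifying this lemma with the machinery the paper needs later, at the cost of a somewhat longer inversion step where the paper gets the invariance for free from a classical theorem. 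One small slip: in your induction you say $\#\{j:\sfc_j=k\}$ is determined once $\#\{j:\sfc_j=\ell\}$ is known for $\ell<k$, but the $k$-th binomial moment $\sum_{m\geq k}n_m\binom{m}{k}$ involves the counts for $m\geq k$, so the induction runs \emph{downward} from the largest value; this is inconsequential since your alternative justification (unit upper-triangular system, or passage to power sums) is already complete.
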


Recall that $ \s(K_i, G) $ denotes the number of cliques of size $ i $ in $ G $. For any chordal graph $ G $, it turns out that the number of components can be expressed as an alternating sum of clique counts (cf.~e.g., \cite[Exercise 5.3.22, p.~231]{west-book}); see \prettyref{fig:peo} for an example.
Instead of the topological proof involving properties of the clique simplex of chordal graphs \cite{McMahon2003, dohmen2013lower}, 
in the next lemma we provide a combinatorial proof together with a sandwich bound. The main purpose of this exposition is to explain how to enumerate cliques in chordal graphs using vertex elimination, which plays a key role in analyzing the statistical estimator developed in the next subsection.

\begin{lemma}
\label{lmm:chordal}
For any chordal graph $ G $,
\begin{equation}
\label{eq:clique-id}
\cc(G) = \sum_{i \geq 1}(-1)^{i+1}\s(K_i, G).
\end{equation}
Furthermore, for any $r\geq 1$,
\begin{equation}
\label{eq:clique-ineq}
\sum_{i=1}^{2r}(-1)^{i+1}\s(K_i, G) \leq \cc(G) \leq \sum_{i=1}^{2r-1}(-1)^{i+1}\s(K_i, G).
\end{equation}
\end{lemma}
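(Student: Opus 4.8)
The plan is to run the whole argument off a single perfect elimination ordering. Fix a PEO $v_1,\dots,v_N$ of $G$ (which exists by \prettyref{thm:peo}), and for each $j$ write $B_j \triangleq N_G(v_j)\cap\{v_1,\dots,v_{j-1}\}$ and $\sfc_j \triangleq |B_j|$, so that $B_j$ is a clique of size $\sfc_j$ by the defining property of a PEO. The first step is the counting identity
\begin{equation*}
\s(K_i,G)=\sum_{j=1}^N\binom{\sfc_j}{i-1},\qquad i\geq 1 .
\end{equation*}
To see this, observe that every $i$-clique $K$ of $G$ has a unique vertex of largest index, say $v_j$; then $K\setminus\{v_j\}\subseteq B_j$, so $K\setminus\{v_j\}$ is an $(i-1)$-subset of $B_j$. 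Conversely, any $(i-1)$-subset $S\subseteq B_j$ gives a clique $S\cup\{v_j\}$ of size $i$ whose top vertex is $v_j$, and distinct $S$ give distinct cliques. Hence cliques of size $i$ are in bijection with pairs $(j,S)$, $S$ an $(i-1)$-subset of $B_j$, which yields the displayed formula (in particular $\s(K_1,G)=\sum_j\binom{\sfc_j}{0}=N$, as it should).

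Next I would substitute this into the alternating sum and swap the order of summation. Writing $k=i-1$ and using $(-1)^{i+1}=(-1)^{k}$,
\begin{equation*}
\sum_{i\geq1}(-1)^{i+1}\s(K_i,G)=\sum_{j=1}^N\sum_{k\geq0}(-1)^k\binom{\sfc_j}{k}=\sum_{j=1}^N(1-1)^{\sfc_j}=\#\{\,j:\sfc_j=0\,\}.
\end{equation*}
It then remains to identify $\#\{j:\sfc_j=0\}$ with $\cc(G)$, which I would do by reconstructing $G$ forwards along the PEO. Adding the vertices in the order $v_1,\dots,v_N$, when $v_j$ is inserted its already-present neighbors are exactly $B_j$, a clique, hence contained in a single connected component of the current graph. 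Therefore inserting $v_j$ either creates a brand-new isolated component (if $\sfc_j=0$) or attaches $v_j$ to one existing component without merging any two components (if $\sfc_j\geq1$); in either case the number of components changes by exactly $\indc{\sfc_j=0}$. Summing over $j$ gives $\cc(G)=\#\{j:\sfc_j=0\}$, which establishes \prettyref{eq:clique-id}. (Note this is valid for any PEO, consistent with \prettyref{lmm:unique}.)

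For the sandwich bound \prettyref{eq:clique-ineq} I would use the truncated version of the same computation together with the elementary identity $\sum_{k=0}^{t}(-1)^k\binom{c}{k}=(-1)^t\binom{c-1}{t}$, valid for every integer $c\geq1$ and $t\geq0$ (a one-line Pascal-rule induction), while for $c=0$ the left side equals $1$ for all $t\geq0$. Applying this termwise,
\begin{equation*}
\sum_{i=1}^{m}(-1)^{i+1}\s(K_i,G)=\sum_{j:\sfc_j=0}1+\sum_{j:\sfc_j\geq1}(-1)^{m-1}\binom{\sfc_j-1}{m-1}=\cc(G)+(-1)^{m-1}\sum_{j:\sfc_j\geq1}\binom{\sfc_j-1}{m-1},
\end{equation*}
and since the last sum is nonnegative, taking $m=2r$ gives a lower bound and $m=2r-1$ an upper bound for $\cc(G)$, which is exactly \prettyref{eq:clique-ineq}. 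The only genuinely delicate points are getting the top-vertex bijection exactly right (each clique counted once, nothing over- or under-counted) and handling the $\sfc_j=0$ terms correctly in the truncated binomial identity; beyond that the argument is routine.
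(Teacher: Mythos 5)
Your proposal is correct and follows essentially the same route as the paper: fix a PEO, establish $\s(K_i,G)=\sum_j\binom{\sfc_j}{i-1}$ by attributing each clique to its top vertex, identify $\cc(G)$ with $\#\{j:\sfc_j=0\}$, and evaluate the (truncated) alternating binomial sums. The only differences are expository — you spell out the top-vertex bijection and the component-counting step that the paper asserts without proof — so nothing further is needed.
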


\begin{proof}
Since $ G $ is chordal, by \prettyref{thm:peo}, it has a PEO $ \{v_1, \dots, v_N\} $. 
Define 
\begin{equation}
 C_j \triangleq N_G(\vertex_{j}) \cap \{ \vertex_{1},\ldots, \vertex_{j-1} \}, \quad \sfc_j \triangleq |C_j|.
\label{eq:Cj}
\end{equation}
Since the neighbors of $ \vertex_{j} $ among $ \vertex_{1},\dots,\vertex_{j-1} $ form a clique, we obtain $ \binom{\sfc_j}{i-1} $ new cliques of size $ i $ when we adjoin the vertex $ \vertex_{j} $ to the subgraph induced by $ \vertex_{1},\dots,\vertex_{j-1} $. Thus,
\begin{equation} \label{eq:peo-complete-count}
\s(K_i, G) = \sum_{j=1}^N \binom{\sfc_j}{i-1}.
\end{equation}
Moreover, note that $ \cc(G) = \sum_{j=1}^N \Indc\{\sfc_j = 0\} $.
Hence, it follows that
\begin{align*}
\sum_{i=1}^{2r-1}(-1)^{i+1}\s(K_i, G) & =
\sum_{i=1}^{2r-1}(-1)^{i+1}\sum_{j=1}^N \binom{\sfc_j}{i-1} = \sum_{j=1}^N\sum_{i=1}^{2r-1}(-1)^{i+1}\binom{\sfc_j}{i-1} \\
& = \sum_{j=1}^N\sum_{i=0}^{2(r-1)}(-1)^{i}\binom{\sfc_j}{i} = \sum_{j=1}^N\left(\binom{\sfc_j-1}{2(r-1)}\Indc\{ \sfc_j \neq 0 \} + \Indc\{ \sfc_j = 0 \}\right) \\
& \geq \sum_{j=1}^N \Indc\{\sfc_j = 0\} = \cc(G),
\end{align*}
and
\begin{align*}
\sum_{i=1}^{2r}(-1)^{i+1}\s(K_i, G) & =
\sum_{i=1}^{2r}(-1)^{i+1}\sum_{j=1}^N \binom{\sfc_j}{i-1} = \sum_{j=1}^N\sum_{i=1}^{2r}(-1)^{i+1}\binom{\sfc_j}{i-1} \\
& = \sum_{j=1}^N\sum_{i=0}^{2r-1}(-1)^{i}\binom{\sfc_j}{i} = \sum_{j=1}^N\left(-\binom{\sfc_j-1}{2r-1}\Indc\{ \sfc_j \neq 0 \} + \Indc\{ \sfc_j = 0 \}\right) \\
& \leq \sum_{j=1}^N \Indc\{\sfc_j = 0\} = \cc(G). \qedhere
\end{align*}
\end{proof}

\subsection{Estimators for chordal graphs} \label{sec:estimators}
\subsubsection{Bounded clique number: unbiased estimators}
In this subsection, we consider unbiased estimation of the number of connected components in chordal graphs. As we will see, unbiased estimators turn out to be minimax rate-optimal for chordal graphs with bounded clique size. The subgraph count identity \prettyref{eq:clique-id} suggests the following unbiased estimator
\begin{equation} \label{eq:chordal-estimator}
\cchat = -\sum_{i\geq 1} \pth{-\frac{1}{p}}^{i}\s(K_i, \tG).
\end{equation}
Indeed, since the probability of observing any given clique of size $i$ is $p^i$, 
\prettyref{eq:chordal-estimator} is clearly unbiased in the same spirit of the Horvitz-Thompson estimator \cite{HT52}. In the case where the parent graph $G$ is a forest, \prettyref{eq:chordal-estimator} reduces to the estimator $\cchat = \sfv(\tG)/p - \sfe(\tG)/p^2 $, as proposed by Frank \cite{Frank1978}.

A few comments about the estimator \prettyref{eq:chordal-estimator} are in order. First, it is completely adaptive to the parameters $ \omega $,  $ \de $ and $ N $, since the sum in \prettyref{eq:chordal-estimator} terminates at the clique number of the subsampled graph. Second, it can be evaluated in time that is linear in $\sfv(\widetilde G)+\sfe(\widetilde G)$. Indeed, the next lemma gives a simple formula for computing \prettyref{eq:chordal-estimator} using the PEO.
Since a PEO of a chordal graph $ G $ can be found in $ O(\sfv(G) + \sfe(G)) $ time 
\cite{Rose1976, Tarjan1984} and any induced subgraph of a chordal graph remains chordal, 
the estimator \prettyref{eq:chordal-estimator} can be evaluated in linear time.\footnote{The algorithm in \cite{Tarjan1984} is implemented in $\texttt{R}$ using the $\texttt{max\char`_cardinality()}$ function in the graph package $\texttt{igraph}$.}
Recall that $q=1-p$.
\begin{lemma}
\label{lmm:ccpeo}	
Let $ \{\widetilde{v}_1, \dots, \widetilde{v}_m \} $, $ m = |S| $, be a PEO of $ \tG $. Then
	\begin{equation} \label{eq:chordal-estimator-peo}
\cchat = \frac{1}{p}\sum_{j = 1}^m \pth{-\frac{q}{p}}^{\widetilde{\sfc}_j},
\end{equation}
where $ \widetilde{\sfc}_j \triangleq |N_{\tG}(\widetilde{v}_j ) \cap \{ \widetilde{v}_1, \dots, \widetilde{v}_{j-1} \}| $
can be calculated from $ \tG $ in linear time.
\end{lemma}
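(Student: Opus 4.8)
The plan is to start from the definition of the estimator, \prettyref{eq:chordal-estimator}, namely $\cchat = -\sum_{i\geq 1}(-1/p)^{i}\s(K_i,\tG)$, and rewrite the clique counts of the sampled graph $\tG$ in terms of the elimination numbers $\widetilde{\sfc}_j$. Since $\tG$ is chordal (being an induced subgraph of a chordal graph, or simply by hypothesis it admits the PEO $\{\widetilde v_1,\dots,\widetilde v_m\}$), \prettyref{eq:peo-complete-count} from the proof of \prettyref{lmm:chordal} applies with $G$ replaced by $\tG$, giving $\s(K_i,\tG)=\sum_{j=1}^m \binom{\widetilde{\sfc}_j}{i-1}$.

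Substituting this in and swapping the two (finite) sums, I would get
\[
\cchat = -\sum_{j=1}^m \sum_{i\geq 1}\pth{-\frac1p}^{i}\binom{\widetilde{\sfc}_j}{i-1}.
\]
For each fixed $j$, I would reindex by $k=i-1$ and pull out a factor $-1/p$, so that the inner sum becomes $-\frac1p\sum_{k\geq 0}\pth{-\frac1p}^{k}\binom{\widetilde{\sfc}_j}{k}$. This sum is finite because $\binom{\widetilde{\sfc}_j}{k}=0$ for $k>\widetilde{\sfc}_j$, so by the binomial theorem it equals $-\frac1p\pth{1-\frac1p}^{\widetilde{\sfc}_j}$. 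Using $1-\frac1p=-\frac qp$, the inner sum is $-\frac1p\pth{-\frac qp}^{\widetilde{\sfc}_j}$, and plugging back in yields exactly \prettyref{eq:chordal-estimator-peo}.

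Finally, for the computational claim, I would note that each $\widetilde{\sfc}_j=|N_{\tG}(\widetilde v_j)\cap\{\widetilde v_1,\dots,\widetilde v_{j-1}\}|$ is read off directly while scanning the PEO: one pass over the adjacency lists of $\tG$ in PEO order counts, for each $j$, the neighbors of $\widetilde v_j$ appearing earlier, in total time $O(\sfv(\tG)+\sfe(\tG))$; combined with the linear-time PEO algorithm of \cite{Rose1976}, this gives linear-time evaluation.

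There is essentially no obstacle here: the only thing to be careful about is that all the sums involved are finite (they terminate at $\omega(\tG)$ and at $\widetilde{\sfc}_j$ respectively), so the interchange of summation and the use of the binomial theorem are unconditionally justified; the algebraic identity $1-1/p=-q/p$ is what converts the Horvitz--Thompson-style weights into the compact per-vertex form.
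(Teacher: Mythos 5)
Your proposal is correct and follows essentially the same route as the paper's proof: apply the PEO clique-count identity \prettyref{eq:peo-complete-count} to the (chordal) sampled graph $\tG$, interchange the finite sums, and evaluate the inner sum via the binomial theorem using $1-\tfrac{1}{p}=-\tfrac{q}{p}$. The algebra and the linear-time claim are both handled as in the paper, so there is nothing to add.
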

\begin{proof}

Because the subsampled graph $ \tG $ is also chordal, by \prettyref{eq:peo-complete-count}, we have $ \s(K_i, \tG) = \sum_{j=1}^m \binom{\widetilde{\sfc}_j}{i-1} $.
Thus, \prettyref{eq:chordal-estimator} can also be written as
\begin{align*}
\cchat & = -\sum_{i=1}^m\left(-\frac{1}{p}\right)^{i}\s(K_i, \tG) = -\sum_{i=1}^m\left(-\frac{1}{p}\right)^{i}\sum_{j=1}^{m}\binom{\widetilde{\sfc}_j}{i-1} \\
& = -\sum_{j=1}^{m}\sum_{i=1}^m\left(-\frac{1}{p}\right)^{i}\binom{\widetilde{\sfc}_j}{i-1} = \frac{1}{p}\sum_{j=1}^{m}\sum_{i=0}^{m-1}\left(-\frac{1}{p}\right)^{i}\binom{\widetilde{\sfc}_j}{i} \\
& = \frac{1}{p}\sum_{j= 1}^m \left(-\frac{q}{p}\right)^{\widetilde{\sfc}_j}. \qedhere
\end{align*}
\end{proof}

In addition to the aforementioned computational advantages of using \prettyref{eq:chordal-estimator-peo} over \prettyref{eq:chordal-estimator}, let us also describe why \prettyref{eq:chordal-estimator-peo} is more numerically stable. Both estimators are equal to an alternating sum of the form $\sum_i a_i(-1/p)^{b_i}$. In \prettyref{eq:chordal-estimator-peo}, $a_i = q^{b_i}/p$, whereas $a_i = -\ind(K_{b_i}, \tG)$ in \prettyref{eq:chordal-estimator}, which can be as large as $O(N2^{\omega})$ in magnitude. Thus, when $ \tG $ is sufficiently dense, computation of \prettyref{eq:chordal-estimator} involves adding and subtracting extremely large numbers -- making it prone to integer overflow and suffer from loss of numerical precision. For example, double-precision floating-point arithmetic (e.g., used in $\texttt{R}$) gives from $15$ to $17$ significant decimal digits precision. In our experience, this tends to be insufficient for most mid-sized, real-world networks (see \prettyref{app:experiments}) and the estimator \prettyref{eq:chordal-estimator} outputs wildly imprecise numbers.

Using elementary enumerative combinatorics, in particular, the vertex elimination structure of chordal graphs, the next theorem provides a performance guarantee for the estimator \prettyref{eq:chordal-estimator} in terms of a variance bound and a high-probability bound, which, in particular, settles the upper bound of the minimax mean squared error in \prettyref{thm:chordalmain} and \prettyref{thm:forests}.


\begin{theorem} \label{thm:chordal}
Let $ G $ be a chordal graph on $ N $ vertices with maximum degree and clique number at most $ \de $ and $ \omega \geq 2 $, respectively.
Suppose $ \tG $ is generated by the $ \Bern(p) $ sampling model. Then
$\cchat$ defined in \prettyref{eq:chordal-estimator} is an unbiased estimator of $ \cc(G) $. Furthermore,
\begin{equation}
\label{eq:chordal-var}
\Var[\cchat] \leq  N\left(\frac{q}{p} + \de \right) \left(\left( \frac{q}{p} \right)^{\omega-1} \vee \frac{q}{p} \right)\leq \frac{N}{p^{\omega}} + \frac{N\de}{p^{\omega-1}},
\end{equation}
and for all $ t \geq 0 $,
\begin{equation} \label{eq:chordal-concentration}
\prob{ |\cchat -  \cc(G) | \geq t } \leq 2\exp \left\{-\frac{8p^{\omega}t^2}{ 25(\de\omega+1)(N + t/3 ) } \right\}.
\end{equation}
\end{theorem}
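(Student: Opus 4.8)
The plan is to establish the three assertions — unbiasedness, the variance bound \prettyref{eq:chordal-var}, and the tail bound \prettyref{eq:chordal-concentration} — from one common device: a representation of $\cchat$ as a sum of locally dependent random variables indexed by the vertices of $G$ through a fixed perfect elimination ordering. After that, unbiasedness and the variance bound are moment computations, and the tail bound is a Bernstein-type estimate for a sum with a sparse dependency graph.

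\emph{Unbiasedness and the vertex-indexed form.} A fixed clique of size $i$ in $G$ survives in $\tG=G[S]$ exactly when all $i$ of its vertices are sampled, so $\Expect[\s(K_i,\tG)]=p^i\,\s(K_i,G)$; since the sum in \prettyref{eq:chordal-estimator} terminates at $\omega(\tG)\le\omega$, this gives $\Expect[\cchat]=\sum_{i\ge1}(-1)^{i+1}\s(K_i,G)=\cc(G)$ by the clique identity \prettyref{eq:clique-id}. For the rest, fix a PEO $\{v_1,\dots,v_N\}$ of $G$ (it exists by \prettyref{thm:peo}), set $C_j=N_G(v_j)\cap\{v_1,\dots,v_{j-1}\}$ with $\sfc_j=|C_j|$, and let $X_v=\indc{v\in S}$ be the independent $\Bern(p)$ sampling indicators. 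Because the restriction of a PEO to an induced subgraph is again a PEO, the manipulation behind \prettyref{lmm:ccpeo} — applied now with the parent graph's deterministic PEO and the clique-survival indicators inserted into \prettyref{eq:peo-complete-count} — yields
\[
\cchat=\frac1p\sum_{j=1}^N Z_j,\qquad Z_j\triangleq X_{v_j}\prod_{v\in C_j}\Bigl(1-\frac{X_v}{p}\Bigr),
\]
where $Z_j$ is a function of the sampling indicators on the clique $A_j\triangleq\{v_j\}\cup C_j$, whose size $\sfc_j+1$ is at most $\min\{\de+1,\omega\}$. From independence together with $\Expect[1-X_v/p]=0$ and $\Expect[(1-X_v/p)^2]=q/p$, one reads off $\Expect[Z_j]=p\,\indc{\sfc_j=0}$ (so $\Expect[\cchat]=\sum_j\indc{\sfc_j=0}=\cc(G)$, re-deriving unbiasedness) and $\Expect[Z_j^2]=p\,(q/p)^{\sfc_j}$.

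\emph{Variance.} Write $\Var[\cchat]=p^{-2}\sum_{j,k}\Cov(Z_j,Z_k)$. The diagonal is $p^{-2}\sum_j\Var[Z_j]\le N p^{-1}\bigl((q/p)^{\omega-1}\vee(q/p)\bigr)$, which is at most $N(q/p+\de)\bigl((q/p)^{\omega-1}\vee(q/p)\bigr)$ since $q/p+\de\ge q/p+1=1/p$. The key structural observation for the off-diagonal is that $\Cov(Z_j,Z_k)=0$ for $j\ne k$ unless $C_j\subseteq A_k$ and $C_k\subseteq A_j$: a vertex in $C_j\setminus A_k$ appears exactly once across the product $Z_jZ_k$ and contributes a factor $\Expect[1-X_v/p]=0$, forcing $\Expect[Z_jZ_k]=0$ and hence (since $\sfc_j\ge1$ makes $\Expect[Z_j]=0$) also $\Cov=0$. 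Thus $C_j$ and $C_k$ differ by at most the swap $v_j\leftrightarrow v_k$, so $A_j$ and $A_k$ are essentially the same clique; enumerating the few possible shapes (e.g.\ $v_k$ is a later common neighbor of $C_j$, or of $C_j\cup\{v_j\}$, or a vertex of $C_j$ itself) shows that each $j$ has only $O(\de)$ correlated partners, using $\sfc_j\le\de$. For each such partner a direct evaluation of $\Expect[Z_jZ_k]$ via the factors $\Expect[X_v(1-X_v/p)]=-q$ and $\Expect[(1-X_v/p)^2]=q/p$ on the shared vertices gives $p^{-2}|\Cov(Z_j,Z_k)|\le (q/p)^{\omega-1}\vee(q/p)$. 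Summing over all $j$ yields $\Var[\cchat]\le N(q/p+\de)\bigl((q/p)^{\omega-1}\vee(q/p)\bigr)$; the second inequality in \prettyref{eq:chordal-var} then follows from $q\le1$ and $(q/p)^{\omega-1}\vee(q/p)\le p^{-(\omega-1)}$ for $\omega\ge2$.

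\emph{Concentration, and the main obstacle.} The representation $\cchat-\cc(G)=p^{-1}\sum_j(Z_j-\Expect Z_j)$ expresses $\cchat$ as a sum of centered terms whose dependency graph has an edge between $j$ and $k$ whenever $A_j\cap A_k\ne\emptyset$; since $|A_j|\le\omega$ and every vertex lies in at most $\de+1$ of the sets $A_k$, this graph has maximum degree $\lesssim\de\omega$. Each summand $p^{-1}(Z_j-\Expect Z_j)$ has magnitude $\lesssim p^{-\omega}$ (from $|1-X_v/p|\le1/p$ and $|C_j|\le\omega-1$), and the total variance is $p^{-2}\sum_j\Var[Z_j]\lesssim N p^{-\omega}$ by the previous step, so a Bernstein-type inequality for sums with a bounded dependency graph — obtained, say, by partitioning the index set along a proper colouring of the dependency graph into $O(\de\omega)$ classes of mutually independent terms and combining the per-class exponential-moment bounds — yields \prettyref{eq:chordal-concentration} with the $(\de\omega+1)$ factor and the Bernstein denominator $N+t/3$, after tracking numerical constants; the coarser bounds $|1-X_v/p|\le\max\{1,q/p\}$ and $|Z_j-\Expect Z_j|=O(1)$ handle $p\ge1/2$ uniformly so the statement holds for all $t\ge0$ with no restriction on $p$. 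The main obstacle is the variance estimate: a naive Cauchy–Schwarz-plus-union-bound argument over all pairs with $A_j\cap A_k\ne\emptyset$ loses a factor of roughly $\omega\de/p$, and recovering the sharp coefficient $N(q/p+\de)$ requires both the vanishing-covariance structure above and the case analysis of $\Expect[Z_jZ_k]$ according to the shape of the overlap $A_j\cap A_k$.
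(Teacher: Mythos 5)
Your proposal follows essentially the same route as the paper's proof: the vertex-indexed decomposition $\cchat=p^{-1}\sum_j Z_j$ along a PEO of the parent graph is identical to the paper's $p^{-1}\sum_j b_j(-q/p)^{\widehat{\sfc}_j}$ (your product $\prod_{v\in C_j}(1-X_v/p)$ equals $(-q/p)^{N_{C_j}}$, and your factor-wise moment computations reproduce the paper's orthogonality lemma), your covariance analysis isolates the same two overlap shapes $C_k=C_j$ and $C_k=C_j\cup\{v_j\}$, and the tail bound rests on the same dependency-graph Bernstein inequality, which the paper imports from Janson rather than reproving via colouring. The only point needing care to land the \emph{stated} constants is that the pairs with $C_k=C_j\cup\{v_j\}$ have negative covariance and should be discarded rather than bounded in absolute value --- summing $|\Cov(Z_j,Z_k)|$ over both shapes gives roughly $2\de$ correlated partners per vertex and degrades the bound to $N(q/p+2\de)(\cdots)$ --- and likewise the paper's \prettyref{lmm:independent} gives the dependency-graph maximum degree $\de\omega$ exactly, slightly sharper than your $\omega(\de+1)$.
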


To prove \prettyref{thm:chordal} we start by presenting a useful lemma. Note that \prettyref{lmm:ccpeo} states that $ \cchat $ is a linear combination of $ (-q/p)^{\widetilde{\sfc}_j} $; here $\widetilde{\sfc}_j$ is computed using a PEO of the sampled graph, which itself is random. 
The next result allows us rewrite the same estimator as a linear combination of $ (-q/p)^{\widehat{\sfc}_j} $, where $ \widehat{\sfc}_j $ depends on the PEO of the parent graph (which is deterministic). Note that this is only used in the course of analysis since the population level PEO is not observed.
This representation is extremely useful in analyzing the performance of $ \cchat $ and its biased variant in \prettyref{sec:smooth}. More generally, we have the following result, which we prove in \prettyref{app:proofs}.

\begin{lemma} \label{lmm:permutation}
Let $ \{ v_1, \dots, v_N \} $ be a PEO of $ G $ and let $ \{ \widetilde{v}_1, \dots, \widetilde{v}_m \} $, $ m = |S| $, be a PEO of $ \tG $. Furthermore, let $ \widehat{\sfc}_j = |N_{\tG}( v_j ) \cap \{ v_1, \dots, v_{j-1} \}| $ and $ \widetilde{\sfc}_j = |N_{\tG}(\widetilde{v}_j ) \cap \{ \widetilde{v}_1, \dots, \widetilde{v}_{j-1} \}| $. Let $ \widehat{\sfg} = \widehat{\sfg}(\tG) $ be a linear estimator of the form
\begin{equation} \label{eq:linear-est}
\widehat{\sfg} = \sum_{j=1}^m g(\widetilde{\sfc}_j).
\end{equation}
Then
\begin{equation*}
\widehat{\sfg} = \sum_{j=1}^Nb_j g(\widehat{\sfc}_j),
\end{equation*}
where $b_j \triangleq \indc{v_j \in S}$.
\end{lemma}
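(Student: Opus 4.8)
The plan is to show that both sides of the claimed identity, viewed as functions defined by the PEO-type neighborhood-back-counts, are really the same sum over all vertices of $G$, just indexed differently. The key point is that the quantity $\widehat{\sfc}_j = |N_{\tG}(v_j)\cap\{v_1,\dots,v_{j-1}\}|$ counts, for each vertex $v_j$ of the \emph{parent} graph, only neighbors that were sampled (because edges of $\tG$ exist only between sampled vertices); in particular, if $v_j\notin S$ then $N_{\tG}(v_j)=\emptyset$ and so $\widehat{\sfc}_j=0$. Thus $\sum_{j=1}^N b_j\, g(\widehat{\sfc}_j)$ is exactly $\sum_{j:\,v_j\in S} g(\widehat{\sfc}_j)$, a sum over the sampled vertices with back-counts taken along the \emph{inherited} ordering (the restriction of the parent PEO $v_1,\dots,v_N$ to $S$).

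First I would observe that the restriction of a PEO of $G$ to the vertex subset $S$ is a PEO of the induced subgraph $\tG = G[S]$: this is immediate from the definition, since for each sampled $v_j$, the set $N_{\tG}(v_j)\cap\{v_i\in S: i<j\} = N_G(v_j)\cap\{v_i\in S:i<j\}$ is a subset of the clique $N_G(v_j)\cap\{v_1,\dots,v_{j-1}\}$, hence itself a clique in $\tG$. Write this inherited PEO of $\tG$ as $(v_{\pi(1)},\dots,v_{\pi(m)})$ where $\pi$ is the increasing enumeration of the indices $j$ with $v_j\in S$; along this ordering the back-count at the $k$-th vertex is precisely $\widehat{\sfc}_{\pi(k)}$. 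Therefore $\sum_{j=1}^N b_j g(\widehat{\sfc}_j) = \sum_{k=1}^m g(\widehat{\sfc}_{\pi(k)})$ is the linear estimator $\widehat{\sfg}$ of the form \eqref{eq:linear-est} evaluated on this \emph{particular} PEO of $\tG$.

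It then remains to argue that $\widehat{\sfg}$ does not depend on which PEO of $\tG$ is used to evaluate it. This is where Lemma~\ref{lmm:unique} does the work: any two PEOs of $\tG$ produce the same multiset of back-counts $\{\widetilde{\sfc}_1,\dots,\widetilde{\sfc}_m\}$, so the sum $\sum_{j=1}^m g(\widetilde{\sfc}_j)$ is invariant under the choice of PEO, being a symmetric function of that multiset. Hence evaluating $\widehat{\sfg}$ on the given PEO $\{\widetilde{v}_1,\dots,\widetilde{v}_m\}$ or on the inherited one $\{v_{\pi(1)},\dots,v_{\pi(m)}\}$ yields the same value, which completes the proof. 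I do not anticipate a serious obstacle here; the only thing requiring a little care is the bookkeeping in the first step — checking that the restriction of a parent PEO to $S$ is genuinely a PEO of $\tG$ and that "back-count along the inherited order at position $k$" equals $\widehat{\sfc}_{\pi(k)}$ — but this is routine once one notes that $\tG$ has no edges incident to unsampled vertices.
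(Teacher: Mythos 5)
Your proposal is correct and follows essentially the same route as the paper: observe that the parent PEO restricted to $S$ is a PEO of $\tG$ whose back-counts are exactly the $\widehat{\sfc}_{\pi(k)}$, then invoke Lemma~\ref{lmm:unique} to conclude that the multiset of back-counts is PEO-independent, so the two sums agree. You simply spell out the bookkeeping (the $b_j$ filter and the identification of inherited back-counts) that the paper leaves implicit.
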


We also need a couple of ancillary results whose proofs are also given in \prettyref{app:proofs}:
\begin{lemma}[Orthogonality] \label{lmm:orthogonal}
Let\footnote{In fact, the function $f(N_{S})=(-\frac{q}{p})^{N_{S}}$ is the (unnormalized) orthogonal basis for the binomial measure that is used in the analysis of Boolean functions \cite[Definition 8.40]{odonnell}.}
\begin{equation}
f(k) = \pth{-\frac{q}{p}}^k, \quad k \geq 0.
\label{eq:f}
\end{equation}
Let $ \{b_v: v\in V\}$ be independent $ \Bern(p) $ random variables. For any $S\subset V$, define $ N_{S} = \sum_{\vertex\in S}b_{\vertex} $. 
Then
\begin{equation*}
\Expect[f(N_{S})f(N_{T})] = \Indc\{S = T\}(q/p)^{|{S}|}.
\end{equation*}
In particular, $\Expect[f(N_{S})] = 0$ for any $S \neq \emptyset$.
\end{lemma}


\begin{lemma} \label{lmm:independent}
Let $ \{ v_1,\ldots,v_N \} $ be a PEO of a chordal graph $ G $ on $ N $ vertices with maximum degree and clique number at most $ \de $ and $ \omega $, respectively. Let $C_j \triangleq N_G(\vertex_{j}) \cap \{ \vertex_{1},\ldots, \vertex_{j-1} \}$. Then\footnote{The bound in \prettyref{eq:Cjnumber} is almost optimal, since the left-hand side is equal to $ N(d-2) $ when $G$ consists of $ N/(d+1) $ copies of stars $ S_d $.}
\begin{equation}
 | \{ (i,j): i \neq j,\; C_j = C_i \neq \emptyset \} | \leq N(\de-1).
\label{eq:Cjnumber}
\end{equation}
Furthermore, let 
\begin{equation}
A_j = \{ \vertex_j \} \cup C_j.
\label{eq:Aj}
\end{equation}
Then for each $j \in [N]$,
\begin{equation} 
\label{eq:maxdegA}
|\{ i \in [N]: i \neq j, \; A_i \cap A_j \neq \emptyset \}| \leq \de \omega.
\end{equation}
\end{lemma}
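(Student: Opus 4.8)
The plan is to establish the two inequalities by essentially the same device: charge each ``bad'' index, or pair of indices, to a vertex of $G$ and then invoke the degree bound $|N_G(\vertex)|\le \de$.

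For \prettyref{eq:Cjnumber}, the first observation is that if $C_i=C_j\ne\emptyset$ then $C_i$ and $C_j$, being equal, share their highest-indexed vertex. I would therefore partition the indices $j$ with $C_j\ne\emptyset$ according to $\mu(j)\triangleq\max\{k:\vertex_k\in C_j\}$, and set $T_k\triangleq\{j:C_j\ne\emptyset,\ \mu(j)=k\}$. Every $j\in T_k$ has $\vertex_k\in C_j\subseteq N_G(\vertex_j)\cap\{\vertex_1,\dots,\vertex_{j-1}\}$, so $\vertex_j$ is a neighbor of $\vertex_k$ with $j>k$; distinct $j\in T_k$ give distinct such neighbors, hence $|T_k|\le|N_G(\vertex_k)|\le\de$. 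Since any ordered pair $(i,j)$ counted on the left-hand side of \prettyref{eq:Cjnumber} satisfies $\mu(i)=\mu(j)$, it lies within a single block, so the count is at most $\sum_k|T_k|\,(|T_k|-1)\le(\de-1)\sum_k|T_k|\le(\de-1)N$, the blocks being disjoint subsets of $[N]$.

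For \prettyref{eq:maxdegA}, I would begin by noting that $A_j=\{\vertex_j\}\cup C_j$ is a clique: by \prettyref{def:peo}, $C_j$ is a clique, and $\vertex_j$ is adjacent to every vertex of $C_j$; hence $|A_j|=1+\sfc_j\le\omega$. Now fix $j$ and write $\{i\ne j:A_i\cap A_j\ne\emptyset\}=\bigcup_{\vertex_\ell\in A_j}\{i\ne j:\vertex_\ell\in A_i\}$, a union of at most $\omega$ sets. For a fixed $\vertex_\ell\in A_j$, the indices $i$ with $\vertex_\ell\in A_i$ are $i=\ell$ together with those $i>\ell$ for which $\vertex_\ell\in N_G(\vertex_i)$, so there are at most $1+|N_G(\vertex_\ell)|$ of them; but since $\vertex_\ell\in A_j$, the index $j$ is always one of them, and removing it leaves at most $|N_G(\vertex_\ell)|\le\de$ indices $i\ne j$ with $\vertex_\ell\in A_i$. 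A union bound over the at most $\omega$ vertices of $A_j$ then yields $|\{i\ne j:A_i\cap A_j\ne\emptyset\}|\le\de\,\omega$.

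I do not expect a genuine obstacle. The only steps requiring attention are, for the first bound, recognizing that grouping matched pairs by their common top vertex and using $|T_k|-1\le\de-1$ is what makes the total linear rather than quadratic in $N$; and, for the second bound, the bookkeeping point that the index $j$ is always charged to each vertex of $A_j$ and must be subtracted, which is precisely what sharpens the crude estimate $\omega(\de+1)$ to $\de\,\omega$.
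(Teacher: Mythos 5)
Your proof is correct, but it follows a genuinely different route from the paper's on both halves. For \prettyref{eq:Cjnumber}, the paper fixes $j$ and bounds $|\{i \neq j : C_i = C_j \neq \emptyset\}|$ directly by $\de - \sfc_j \le \de-1$: each such $\vertex_i$ is a common neighbor of every $\vertex \in C_j$, and any such $\vertex$ already has $\sfc_j$ neighbors inside $A_j$, so the degree bound caps the count; summing over $j$ gives $N(\de-1)$. Your partition by the top vertex $\mu(j)$ of $C_j$, with $|T_k|\le |N_G(\vertex_k)|\le\de$ and the within-block pair count $|T_k|(|T_k|-1)\le(\de-1)|T_k|$, reaches the same $N(\de-1)$; the paper's per-$j$ bound $\de-\sfc_j$ is marginally sharper pointwise, while yours over-counts by including pairs in the same block with $C_i\ne C_j$, but both are equally tight for the stated inequality. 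For \prettyref{eq:maxdegA}, the paper argues by contradiction and pigeonhole to get the sharper intermediate bound $\de\,\sfa_j-(\sfa_j-1)^2$ where $\sfa_j=|A_j|$, then relaxes to $\de\omega$; your direct union bound over the at most $\omega$ vertices of $A_j$, with the observation that $j$ itself is always among the at most $1+|N_G(\vertex_\ell)|$ indices charged to each $\vertex_\ell\in A_j$ and can be subtracted, is more elementary and arrives at $\de\omega$ without the contradiction step. Both of your charging arguments are sound; nothing is missing.
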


\begin{proof}[Proof of \prettyref{thm:chordal}]

For a chordal graph $ G $ on $ N $ vertices, let $ \{v_1, \dots, v_N\} $ be a PEO of $ G $. Recall from \prettyref{eq:Cj} that $ C_j $ denote the set of neighbors of $ \vertex_{j} $ among $ \vertex_{1}, \dots, \vertex_{j-1} $ and $ \sfc_j $ denotes its cardinality. That is, 
\[
\sfc_j = |N_G(\vertex_{j}) \cap \{ \vertex_{1},\ldots, \vertex_{j-1} \}| = \sum_{k=1}^{j-1}\Indc\{ \vertex_{k} \sim \vertex_{j}  \} .
\]
As in \prettyref{lmm:permutation}, let $ \widehat{\sfc}_j $ denote the sample version, i.e.,
\begin{equation*}
\widehat{\sfc}_j 
\triangleq |N_{\tG}(\vertex_{j}) \cap \{ \vertex_{1},\ldots, \vertex_{j-1} \}|  
= b_j\sum_{k=1}^{j-1} b_{k} \Indc\{ \vertex_{k} \sim \vertex_{j} \},
\end{equation*}
where $ b_k \triangleq \Indc\{ \vertex_k \in S \} \iiddistr \Bern(p)$. 
By \prettyref{lmm:ccpeo} and \prettyref{lmm:permutation}, $ \cchat $ can be written as
\begin{equation} \label{eq:chordal-estimator-peo1}
\cchat = \frac{1}{p}\sum_{j=1}^{m}f(\widetilde{\sfc}_j) = \frac{1}{p}\sum_{j=1}^{N} b_{j}f(\widehat{\sfc}_j),
\end{equation}
where the function $f$ is defined in \prettyref{eq:f}. 

To show the variance bound \prettyref{eq:chordal-var}, we note that
\begin{equation} \label{eq:var0}
\Var[\cchat] = \frac{1}{p^2}\sum_{j=1}^{N}\Var[b_{j}f(\widehat{\sfc}_j)] + \frac{1}{p^2}\sum_{j \neq i}\Cov[b_{j}f(\widehat{\sfc}_j), b_{i}f(\widehat{c}_i)].
\end{equation}
Note that $ \widehat{\sfc}_j \mid \{b_{j}=1\} \sim \Binom(\sfc_j, p) $.
Using \prettyref{lmm:orthogonal}, it is straightforward to verify that
\begin{equation}
\Var[b_{j}f(\widehat{\sfc}_j)] =
\begin{cases}
    p\pth{\frac{q}{p}}^{\sfc_j} & \text{if } \sfc_j > 0 \\
    pq & \text{if } \sfc_j = 0
  \end{cases}.
	\label{eq:var1}
\end{equation}
Since $ \sfc_j \leq \omega-1 $, it follows that 
\begin{equation}
\Var[b_{j}f(\widehat{\sfc}_j)] \leq p \qth{\pth{\frac{q}{p}}^{\omega-1} \vee \frac{q}{p}}.
\label{eq:var11}
\end{equation}
The covariance terms are less obvious to bound; but thanks to the orthogonality property in \prettyref{lmm:orthogonal}, many of them are zero or negative. 
Let $N_C \triangleq \sum b_j \indc{v_j \in C}$.
For any $j$, since $v_j \not\in C_j$ by definition, 
applying \prettyref{lmm:orthogonal} yields
\begin{equation}
\Expect[b_j f(\widehat{\sfc}_j)] = p \Expect[f(N_{C_j})] = p \indc{C_j =\emptyset}.
\label{eq:dec}
\end{equation}
Without loss of generality, assume $ j < i $. By the definition of $ C_j $, we have $ v_i \notin C_j $.
Next, we consider two cases separately: 
\paragraph{Case I: $v_j \notin C_i $.}
If either $ C_j $ or $ C_i $ is nonempty, \prettyref{lmm:orthogonal} yields
\begin{equation*}
\Cov[b_{j}f(\widehat{\sfc}_j), b_{i}f(\widehat{\sfc}_i)]
\overset{\prettyref{eq:dec}}{=}\Expect[b_{i}b_{j}f(\widehat{\sfc}_j) f(\widehat{\sfc}_i)]
 = p^2\Expect[f(N_{C_j}) f(N_{C_i})] = p^2\Indc\{C_j = C_i\}\pth{\frac{q}{p}}^{\sfc_j}.
\end{equation*}
If $ C_j = C_i = \emptyset $, then $ \Cov[b_{j}f(\widehat{\sfc}_j), b_{i}f(\widehat{\sfc}_i)] = \Cov[b_{j}, b_{i}] = 0 $.

\paragraph{Case II: $v_j \in C_i $.}
Then $\Expect[b_{i}f(\widehat{\sfc}_i)]=0$ by \prettyref{eq:dec}. Using \prettyref{lmm:orthogonal} again, we have
\begin{align*}
\Cov[b_{j}f(\widehat{\sfc}_j), b_{i}f(\widehat{\sfc}_i)] 
& = p \expect{b_j \pth{-\frac{q}{p}}^{b_j}} \Expect[f(N_{C_j}) f(N_{C_i \setminus \{ v_j\} })] \\ 
& = -pq \Expect[f(N_{C_j}) f(N_{C_i \setminus \{ v_j\} })] \\ 
& = -pq\Indc\{C_j = C_i \setminus \{ v_j\} \}\pth{\frac{q}{p}}^{\sfc_j}.
\end{align*}
To summarize, we have shown that
\begin{equation*}
\Cov[b_{j}f(\widehat{\sfc}_j), b_{i}f(\widehat{\sfc}_i)] =
\begin{cases}
    p^2\pth{\frac{q}{p}}^{\sfc_j} & \text{if } C_j = C_i \neq \emptyset \\
    -pq\pth{\frac{q}{p}}^{\sfc_j} & \text{if } C_j  = C_i\setminus\{\vertex_{j}\} \text{ and } \vertex_{j} \in C_i \\
    0 & \text{otherwise}
  \end{cases}.
\end{equation*}
Thus, 
\begin{align}
\sum_{j\neq i}\Cov[b_{j}f(\widehat{\sfc}_j), b_{i}f(\widehat{c}_i)] \leq \sum_{j \neq i: \; C_j = C_i \neq \emptyset}p^2\pth{\frac{q}{p}}^{\sfc_j} \overset{\prettyref{eq:Cjnumber}}{\leq} N(\de-1)p^2 \qth{\pth{\frac{q}{p}}^{\omega-1} \vee \frac{q}{p}}. \label{eq:var2}
\end{align}
Finally, combining \prettyref{eq:var0}, \prettyref{eq:var11} and \prettyref{eq:var2} yields the desired \prettyref{eq:chordal-var}.

The high-probability bound \prettyref{eq:chordal-concentration} for $ \cchat $ follows from the concentration inequality in \prettyref{lmm:janson} in \prettyref{app:proofs}. 
To apply this result, note that $ \cchat$ is a sum of dependent random variables
\begin{equation}
\cchat = \sum_{j \in [N]} Y_j,
\label{eq:ccY}
\end{equation}
 where $ Y_j = \frac{1}{p}b_{j}f(\widehat{\sfc}_j) $ satisfies $\Expect[Y_j]=0$ for $ \sfc_j > 0 $ and $|Y_j| \leq b \triangleq (\frac{1}{p})^{\omega} $ almost surely. 
Also, $ S \triangleq \sum_{j \in [N]} \Var[Y_j] \leq N(\frac{1}{p})^{\omega} $ by \prettyref{eq:var1}. 
To control the dependency between $ \{ Y_j \}_{j \in [N]} $, 
note that $\widehat{\sfc}_j = b_{j} \sum_{k: v_{k} \in C_j} b_{k}$. Thus $Y_j$ only depends on $\{b_k: k \in A_j\}$, where $ A_j = \{v_j\} \cup C_j $.
Define a dependency graph $ \Gamma $, where $ V(\Gamma) = [N] $ and
\begin{equation*} 
E(\Gamma) = \{ \{i, j\} : i \neq j,\; A_i \cap A_j \neq \emptyset \}.
\end{equation*}
Then $\Gamma$ has maximum degree bounded by $ \de \omega $, by \prettyref{lmm:independent}. 
\end{proof}

\subsubsection{Unbounded clique number: smoothed estimators} \label{sec:smooth}

Up to this point, we have only considered unbiased estimators of the number of connected components. 
If the sample ratio $p$ is at least $\frac{1}{2}$, \prettyref{thm:chordalmain} implies its variance is
\[
\Var[\cchat] \leq N(d+1),
\]
regardless of the clique number $\omega$ of the parent graph. However, if the clique number $\omega$ grows with $N$, for small sampling ratio $p$ the coefficients of the unbiased estimator \prettyref{eq:chordal-estimator} are as large as $\frac{1}{p^\omega}$ which results in exponentially large variance.
Therefore, in order to deal with graphs with large cliques, we must give up unbiasedness to achieve better bias-variance tradeoff. 
Using a technique known as \emph{smoothing} introduced in \cite{Wu2016-2}, next we modify the unbiased estimator to achieve a good bias-variance tradeoff.

To this end, consider a discrete random variable $ L \in \mathbb{N} $ independent of everything else. Define the following estimator by discarding those terms in \prettyref{eq:chordal-estimator-peo} for which $ \widetilde{\sfc}_j $ exceeds $L$, and then averaging over the distribution of $L$.  
In other words, let
\begin{equation} \label{eq:chordal_smooth_estimator}
\cchat_L \triangleq \mathbb{E}_L  \qth{\frac{1}{p}\sum_{j = 1}^m\left(-\frac{q}{p}\right)^{\widetilde{\sfc}_j}\Indc\{\widetilde{\sfc}_j \leq L\}} = \frac{1}{p}\sum_{j = 1}^m\left(-\frac{q}{p}\right)^{\widetilde{\sfc}_j}\prob{L \geq \widetilde{\sfc}_j}.
\end{equation}
Effectively, smoothing acts as soft truncation by introducing a tail probability that modulates the exponential growth of the original coefficients. 
The variance can then be bounded by the maximum magnitude of the coefficients in \prettyref{eq:chordal_smooth_estimator}. Like \prettyref{eq:chordal-estimator}, \prettyref{eq:chordal_smooth_estimator} can be computed in linear time.




The next theorem bounds the mean-square error of $ \cchat_L $, which implies the minimax upper bound previously announced in \prettyref{thm:chordal-unbounded}. Its proof is somewhat technical and so we defer it to \prettyref{app:proofs}.

\begin{theorem} \label{thm:chordal_smooth}
Let $ L\sim \Poisson(\lambda) $ with $ \lambda = \frac{p}{2-3p}\log\left(\frac{Np}{1+\de\omega}\right) $.
If the maximum degree and clique number of $ G $ is at most $ \de $ and $ \omega $, respectively, 
then when $ p < 1/2 $,
\begin{equation*}
\Expect_G|\cchat_L-\cc(G)|^2 \leq 2N^2\left(\frac{Np}{1+\de\omega}\right)^{-\frac{p}{2-3p}}.
\end{equation*}
\end{theorem}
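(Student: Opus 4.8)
The plan is to control the risk by the usual bias--variance split, $\Expect_G|\cchat_L-\cc(G)|^2 = (\Expect_G\cchat_L-\cc(G))^2 + \Var_G[\cchat_L]$, and to verify that the prescribed $\lambda = \frac{p}{2-3p}\log\pth{\frac{Np}{1+\de\omega}}$ is exactly the value that balances the two pieces, each of which will come in at $N^2 R^{-p/(2-3p)}$ with $R := \frac{Np}{1+\de\omega}$. (This implicitly requires $R\ge 1$ so that $\lambda\ge 0$; the bound is only informative once $R$ exceeds a constant depending on $p$, i.e.\ $\lambda\gtrsim 1$.)

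For the variance I would follow the route of \prettyref{thm:chordal}. Apply \prettyref{lmm:permutation} with the coefficient function $g(x) = \frac1p(-q/p)^x\prob{L\ge x}$ to express $\cchat_L = \sum_{j=1}^N Y_j$ with $Y_j = b_j\,g(\widehat{\sfc}_j)$, a function of $\{b_k : k \in A_j\}$ where $A_j = \{v_j\}\cup C_j$. A Chernoff bound on the Poisson upper tail, $\prob{L\ge x}\le e^{-tx+\lambda(e^t-1)}$, optimized at $t = \log(q/p)$ (positive since $p<1/2$), gives $\sup_{x\ge 0}(q/p)^x\prob{L\ge x}\le e^{\lambda(q/p-1)}$, hence $|g|\le M := \frac1p e^{\lambda(q/p-1)}$ and $\Var[Y_j]\le\Expect[Y_j^2]\le pM^2$. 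By \prettyref{lmm:independent} (the bound \prettyref{eq:maxdegA}) there are at most $N(1+\de\omega)$ pairs $(i,j)$ with $A_i\cap A_j\ne\emptyset$ --- the only ones with $\Cov[Y_i,Y_j]\ne 0$ --- so Cauchy--Schwarz on covariances yields $\Var[\cchat_L]\le \frac{N(1+\de\omega)}{p}e^{2\lambda(q/p-1)}$. Since $\frac{N(1+\de\omega)}{p} = N^2/R$ and $e^{2\lambda(q/p-1)} = R^{2(1-2p)/(2-3p)}$, the exponents combine to give $\Var[\cchat_L]\le N^2 R^{-p/(2-3p)}$.

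For the bias I would use the unbiasedness of $\cchat$ from \prettyref{thm:chordal}, so the bias is $\Expect_G[\cchat_L - \cchat]$. Write the deficit as $\cchat - \cchat_L = \frac1p\sum_j(-q/p)^{\widetilde{\sfc}_j}\prob{L<\widetilde{\sfc}_j}$, apply \prettyref{lmm:permutation} once more, and take expectations; since $b_j$ is independent of $\{b_k:k\in C_j\}$ and $\widehat{\sfc}_j\mid\{b_j=1\}\sim\Binom(\sfc_j,p)$, the $j$-th contribution equals $\Expect[(-q/p)^N\prob{L<N}]$ for $N\sim\Binom(\sfc_j,p)$ (and vanishes when $\sfc_j = 0$). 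Expanding this binomial expectation, the product $p^kq^{\sfc_j-k}(-q/p)^k$ simplifies to $(-1)^k q^{\sfc_j}$, leaving $q^{\sfc_j}\sum_k\binom{\sfc_j}{k}(-1)^k\prob{L<k}$; interchanging the order of summation and using the identity $\sum_{k=0}^{\ell}(-1)^k\binom{s}{k} = (-1)^\ell\binom{s-1}{\ell}$ rewrites this, after inserting the Poisson pmf, as $-q^{\sfc_j}e^{-\lambda}L_{\sfc_j-1}(\lambda)$, where $L_n$ denotes the degree-$n$ Laguerre polynomial. The classical bound $|L_n(x)|\le e^{x/2}$ for $x\ge 0$ then caps each term by $q^{\sfc_j}e^{-\lambda/2}\le e^{-\lambda/2}$, and summing over the at most $N$ indices with $\sfc_j\ge 1$ gives $|\Expect_G\cchat_L - \cc(G)|\le Ne^{-\lambda/2}$, so the squared bias is at most $N^2 e^{-\lambda} = N^2 R^{-p/(2-3p)}$. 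Adding the two estimates gives the claimed $2N^2 R^{-p/(2-3p)}$.

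The main obstacle is the bias. The variance step is routine once the coefficient bound $|g|\le M$ is in place, but the \emph{random} deficit $\cchat - \cchat_L$ can be as large as $(q/p)^{\omega}$ in magnitude, so any coefficientwise bound on $\Expect|\cchat - \cchat_L|$ would be far too lossy; one must take the expectation \emph{first} and exploit the massive cancellation among the alternating clique-count corrections. Making that cancellation quantitative --- via the finite-difference computation that produces a Laguerre polynomial, together with the uniform estimate $|L_n(x)|\le e^{x/2}$ --- is the crux of the argument, and it is precisely what keeps the bias polynomially (rather than exponentially) small uniformly over the clique number $\omega$.
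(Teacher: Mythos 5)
Your proposal is correct and follows essentially the same route as the paper's proof: the same bias computation via the $\Binom(\sfc_j,p)$ expectation, the alternating-sum identity, and the Laguerre bound $|L_n(x)|\le e^{x/2}$, and the same variance bound via \prettyref{lmm:permutation}, the dependency structure of the sets $A_j$ from \prettyref{lmm:independent}, and the estimate $\sup_k (q/p)^k\prob{L\ge k}\le \Expect[(q/p)^L]=e^{\lambda(q/p-1)}$ (which you obtain by a Chernoff bound where the paper sums the tail directly). The exponent bookkeeping with $R=\frac{Np}{1+\de\omega}$ matches the paper's choice of $\lambda$ exactly.
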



\subsection{Unions of cliques} \label{sec:cliques}

If the parent graph $G$ consists of disjoint union of cliques, so does the sampled graph $\tG$. 
Counting cliques in each connected components, we can rewrite the estimator \prettyref{eq:chordal-estimator} as 
\begin{equation}
\label{eq:clique-estimator}
\cchat = \sum_{r\geq 1} \pth{1-\pth{-\frac{q}{p}}^r} \cctilde_r =  \cc(\tG) - \sum_{r \geq 1} \pth{-\frac{q}{p}}^r \cctilde_r,
\end{equation}
where $ \cctilde_r $ is the number of components in the sampled graph $\widetilde G$ that have $ r $ vertices.
This coincides with the unbiased estimator proposed by Frank \cite{Frank1978} for cliques, which is, in turn, based on the estimator of Goodman \cite{Goodman1949}.
The following theorem, whose proof is given in \prettyref{app:proofs}, provides an upper bound on its variance, recovering the previous result in \cite[Corollary 11]{Frank1978}:


\begin{theorem} \label{thm:cc-var-clique}
Let $ G $ be a disjoint union of cliques with clique number at most $ \omega $. 
Then $ \cchat $ is an unbiased estimator of $ \cc(G) $ and
\begin{equation*}
\Expect_G|\cchat - \cc(G)|^2 = \Var[\cchat] = \sum_{r=1}^N \pth{\frac{q}{p}}^r \cc_r \leq N\left( \left(\frac{q}{p}\right)^{\omega} \wedge \frac{q}{p} \right),
\end{equation*}
where $ \cc_r $ is the number of connected components in $G$ of size $ r $.
\end{theorem}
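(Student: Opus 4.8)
The plan is to exploit that a disjoint union of cliques stays a disjoint union of cliques under vertex sampling, so the estimator \prettyref{eq:clique-estimator} splits into independent contributions, one per component of $G$. For a component $K$ of $G$ (a clique on $|K|$ vertices), write $\tS_K$ for the number of its vertices that land in the sample, so that $\tS_K\sim\Binom(|K|,p)$, the $\tS_K$ are independent across components, and the components of $\tG$ are exactly the sampled parts of those $K$ with $\tS_K\geq 1$. First I would regroup the sum in \prettyref{eq:clique-estimator} by which component of $G$ each sampled clique comes from: since $\cctilde_r=\sum_K\Indc\{\tS_K=r\}$ and $\cc(\tG)=\sum_K\Indc\{\tS_K\geq 1\}$, this gives $\cchat=\sum_K\Indc\{\tS_K\geq 1\}\bigl(1-(-q/p)^{\tS_K}\bigr)$. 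The one observation that makes everything work is that $(-q/p)^0=1$, so the indicator is redundant and $\cchat=\sum_K\bigl(1-f(\tS_K)\bigr)$, where $f$ is the function in \prettyref{eq:f}; thus $\cchat$ is $\cc(G)$ minus the independent sum $\sum_K f(\tS_K)$.

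The moments of each summand then come straight from the orthogonality lemma. Applying \prettyref{lmm:orthogonal} with $S=T$ equal to the vertex set of $K$ (so that $N_S=\tS_K$) gives $\Expect[f(\tS_K)]=0$ for $K\neq\emptyset$ and $\Expect[f(\tS_K)^2]=(q/p)^{|K|}$, hence $\Var[f(\tS_K)]=(q/p)^{|K|}$; equivalently one checks this by the binomial theorem, $\Expect[(-q/p)^{\tS_K}]=(q+p\cdot(-q/p))^{|K|}=0$ and $\Expect[(q/p)^{2\tS_K}]=(q+p(q/p)^2)^{|K|}=(q/p)^{|K|}$. Summing over the components of $G$, $\Expect[\cchat]=\sum_K 1=\cc(G)$, which is unbiasedness (this also follows from \prettyref{thm:chordal}, since a union of cliques is chordal), and by independence $\Var[\cchat]=\sum_K\Var[f(\tS_K)]=\sum_K(q/p)^{|K|}=\sum_{r\geq 1}(q/p)^r\cc_r$, the claimed identity.

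For the final bound I would use that the clique number is at most $\omega$, so $\cc_r=0$ for $r>\omega$, together with the fact that over $1\leq r\leq\omega$ one has $(q/p)^r\leq(q/p)^{\omega}\vee(q/p)$ (the maximum being attained at $r=\omega$ when $q/p\geq 1$ and at $r=1$ when $q/p\leq 1$); combining this with $\sum_{r\geq 1}\cc_r\leq\sum_{r\geq 1}r\,\cc_r=N$ then yields $\Var[\cchat]\leq N\bigl((q/p)^{\omega}\vee(q/p)\bigr)$. I do not expect a genuine obstacle here: the argument reduces to the two elementary facts that sampling commutes with the clique-union structure and that the per-component term rewrites as $1-f(\tS_K)$, after which unbiasedness and the variance formula are immediate from \prettyref{lmm:orthogonal} and independence; the only mild care needed is in the last step, where one splits on whether $q/p\leq 1$ or $q/p\geq 1$ when bounding $(q/p)^r$ over $1\leq r\leq\omega$.
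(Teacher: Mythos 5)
Your argument is correct and is essentially the paper's own proof: the paper likewise regroups \prettyref{eq:clique-estimator} by components of $G$ to write $\cchat=\sum_{k=1}^{\cc(G)}\bigl[1-(-q/p)^{\widetilde N_k}\bigr]$ with $\widetilde N_k\sim\Binom(N_k,p)$ independent, and reads off the mean and variance exactly as you do (your extra care with the redundant indicator $\Indc\{\widetilde S_K\ge 1\}$ and the appeal to \prettyref{lmm:orthogonal} only makes explicit what the paper leaves implicit). One point worth flagging: your final bound is $N\bigl((q/p)^{\omega}\vee(q/p)\bigr)$, whereas the theorem as printed has $\wedge$; your version is the one that is actually provable (and the one the paper's own one-line justification delivers), since e.g.\ $G=NK_1$ has variance $Nq/p$, which exceeds $N\bigl((q/p)^{\omega}\wedge(q/p)\bigr)$ whenever $p>1/2$ and $\omega\ge 2$. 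So the $\wedge$ in the statement appears to be a typo for $\vee$, and your proof establishes the corrected claim.
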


\prettyref{thm:cc-var-clique} implies that as long as we sample at least half of the vertices, i.e., $p \geq \frac{1}{2}$, for any $G$ consisting of disjoint cliques, the unbiased estimator \prettyref{eq:clique-estimator} satisfies
\begin{equation*}
\Expect_G|\cchat - \cc(G)|^2 \leq N,
\end{equation*}
regardless of the clique size. However, if $p< 1/2$, the variance can be exponentially large in $N$. Next, we use the smoothing technique again to obtain a biased estimator with near-optimal performance. To this end, consider a discrete random variable $ L \in \mathbb{N} $ and define the following estimator by truncating \prettyref{eq:clique-estimator} at the random location $L$ and average over its distribution:
\begin{equation} \label{eq:smooth}
\cctilde_L 
\triangleq \cc(\tG) - \Expect_L \qth{\sum_{r=1}^L \left(-\frac{q}{p}\right)^r\cctilde_r }
= \cc(\tG) - \sum_{r \geq 1} \pth{-\frac{q}{p}}^r \prob{L \geq r} \cctilde_r.
\end{equation}

The following result, proved in \prettyref{app:proofs}, bounds the mean squared error of $ \cctilde_L $ and, consequently, bounds the minimax risk in \prettyref{thm:clique}. It turns out that the smoothed estimator \prettyref{eq:smooth} with appropriately chosen parameters is nearly optimal. In fact, \prettyref{thm:clique_smooth}, whose proof is given in \prettyref{app:proofs}, gives an upper bound on the sampling complexity (see \prettyref{tab:sampling-complexity}), which, in view of \cite[Theorem 4]{WY16-distinct}, is seen to be optimal.

\begin{theorem} \label{thm:clique_smooth}
Let $ G $ be a disjoint union of cliques. Let $ L \sim \text{Pois}(\lambda) $ with $ \lambda = \frac{p}{2-3p}\log(N/4) $. 
If $ p < 1/2 $, then
\begin{equation*}
\mathbb{E}_G |\cctilde_L - \cc(G)|^2 \leq N^2 (N/4)^{-\frac{p}{2-3p}}.
\end{equation*}
\end{theorem}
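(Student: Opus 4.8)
The plan is to mimic the bias-variance analysis of Theorem \ref{thm:chordal_smooth}, exploiting the extra structure that a disjoint union of cliques affords (no maximum-degree constraint is needed). Write $\cctilde_L = \cc(\tG) - \sum_{r\geq 1}(-q/p)^r\prob{L\geq r}\cctilde_r$ and decompose the error into bias and variance. For the \textbf{bias}, I would group the sampled vertices by which parent component they fall into: if the parent components have sizes $N_1,\dots,N_{\cc(G)}$, then $\cctilde_r = \sum_k \Indc\{\widetilde N_k = r\}$ with $\widetilde N_k \sim \Binom(N_k,p)$ independently. Then $\Expect[\cc(\tG) - \cctilde_L] = \sum_k \Expect[\,\Indc\{\widetilde N_k\geq 1\} - \sum_{r\geq 1}(-q/p)^r\prob{L\geq r}\Indc\{\widetilde N_k=r\}\,]$, and expanding the binomial exactly as in the proof of Theorem \ref{thm:chordal_smooth} (the computation there with $\sfc_j$ replaced by $N_k$) should yield $\Expect[\cc(G)-\cctilde_L] = -e^{-\lambda}\sum_k q^{N_k} L_{N_k-1}(\lambda)$ — wait, more care is needed since here we also must account for the $\Indc\{\widetilde N_k\geq 1\}$ term versus the identity $\cc_r$; in any case the Laguerre-polynomial identity \prettyref{eq:laguerre} together with the uniform bound $|L_m(x)|\leq e^{x/2}$ gives $|\Expect[\cctilde_L - \cc(G)]|\leq N e^{-\lambda/2}$.

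For the \textbf{variance}, the key simplification over the chordal case is that the contributions of distinct parent components are \emph{independent}, so there are no cross terms at all: $\Var[\cctilde_L] = \sum_k \Var[g_L(\widetilde N_k)]$ where $g_L(r) = \Indc\{r\geq 1\} - (-q/p)^r\prob{L\geq r}$ (absorbing the $\cc(\tG)$ term componentwise). Each term is bounded by $\Expect[g_L(\widetilde N_k)^2]$, and since $|g_L(r)|\leq 1 + (q/p)^r\prob{L\geq r}\leq 1 + \sup_{k\geq 0}(q/p)^k\prob{L\geq k}$, the same bound $\sup_k (q/p)^k\prob{L\geq k}\leq \Expect_L[(q/p)^L] = \exp\{\lambda(q/p - 1)\}$ used in \prettyref{eq:varccL3} applies. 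This gives $\Var[\cctilde_L]\lesssim N\exp\{2\lambda(q/p-1)\}$ (the factor $2$ because we square; when $p<1/2$ the exponent is positive so we must keep track of it — actually $q/p - 1 = (1-2p)/p > 0$). Combining, $\Expect_G|\cctilde_L - \cc(G)|^2 \leq N^2 e^{-\lambda} + N\exp\{2\lambda(q/p-1)\}$, and substituting $\lambda = \frac{p}{2-3p}\log(N/4)$ balances the two terms: $2\lambda(q/p-1) = 2\lambda\cdot\frac{1-2p}{p} = \frac{2(1-2p)}{2-3p}\log(N/4)$, and one checks $N\cdot (N/4)^{\frac{2(1-2p)}{2-3p}} = N^2(N/4)^{-\frac{p}{2-3p}}$ after noting $2 - \frac{2(1-2p)}{2-3p}\cdot\frac{\log N}{\log(N/4)}$... so the exact constants need the substitution to be carried through carefully, matching the claimed $N^2(N/4)^{-p/(2-3p)}$.

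The main obstacle I anticipate is \textbf{getting the bias computation exactly right}, in particular handling the $\cc(\tG)$ term correctly inside the summation-by-parts manipulation with $\prob{L\geq r}$ and $\prob{L<r}$, and making sure the empty-component / $\widetilde N_k = 0$ cases are treated consistently (a sampled clique contributes to $\cc(\tG)$ only if $\widetilde N_k\geq 1$). Everything else — the variance bound and the final optimization over $\lambda$ — is a direct transcription of the corresponding steps in the proof of Theorem \ref{thm:chordal_smooth}, with the dependency-graph bookkeeping replaced by genuine independence across components, which is strictly easier. Since the theorem is stated as proved in \prettyref{app:proofs}, I would present the argument at the level of detail above and defer the routine algebra.
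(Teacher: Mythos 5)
Your proposal follows the paper's proof essentially step for step: the bias is computed componentwise via the binomial expansion and the Laguerre-polynomial identity \prettyref{eq:laguerre} (your worry about the $\Indc\{\widetilde N_k\geq 1\}$ bookkeeping resolves itself because the $\widetilde N_k=0$ term carries the factor $\prob{L<0}=0$), the variance uses exact independence across cliques together with $\sup_{k}(q/p)^k\prob{L\geq k}\leq \Expect_L[(q/p)^L]=e^{\lambda(q/p-1)}$, and $\lambda$ is chosen to balance the two terms. The one place you fall short of the stated bound is in discarding the weights $\sum_k q^{N_k}$ (bias) and $\sum_k(1-q^{N_k})$ (variance) too early: bounding the bias by $Ne^{-\lambda/2}$ and the variance by $4Ne^{2\lambda(q/p-1)}$ gives, after balancing, $2N^2(N/4)^{-p/(2-3p)}$, i.e., twice the claimed constant. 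The paper retains these weights --- by Cauchy--Schwarz the squared bias is at most $e^{-\lambda}N\sum_k q^{N_k}$ while the variance is at most $4e^{2\lambda(q/p-1)}\sum_k(1-q^{N_k})$ --- and since the two weights sum to $\cc(G)\leq N$, the total is at most $\cc(G)\max\{Ne^{-\lambda},\,4e^{2\lambda(q/p-1)}\}=N^2(N/4)^{-p/(2-3p)}$ exactly; otherwise your argument is correct as outlined.
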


\begin{remark}
Alternatively, we could specialize the estimator $ \cchat_L $ in \prettyref{eq:chordal_smooth_estimator} that is designed for general chordal graphs to the case when $ G $ is a disjoint union of cliques; however, the analysis is less clean and the results are slightly weaker than \prettyref{thm:clique_smooth}.
\end{remark}

\subsection{Non-chordal graphs} \label{sec:non-chordal}
A general graph can always be made chordal by adding edges. Such an operation is called a \emph{chordal completion} or \emph{triangulation} of a graph, henceforth denoted by $  \mathsf{TRI} $. There are many ways to triangulate a graph and this is typically done with the goal of minimizing some objective function (e.g., number of edges or the clique number). Without loss of generality, triangulations do not affect the number of connected components, since the operation can be applied to each component. 

In view of the various estimators and their performance guarantees developed so far for chordal graphs, a natural question to ask is how one might generalize those to non-chordal graphs.
One heuristic is to first triangulate the subsampled graph and then apply the estimator such as \prettyref{eq:chordal-estimator-peo} and 
\prettyref{eq:chordal_smooth_estimator} that are designed for chordal graphs. 
Suppose a triangulation operation commutes with subgraph sampling in distribution,\footnote{By ``commute in distribution'' we mean the random graphs $ \mathsf{TRI}(\tilde{G}) $ and $ \tilde{\mathsf{TRI}(G)} $ have the same distribution. That is, the triangulated sampled graph is statistically identical to a sampled version of a triangulation of the parent graph.} then the modified estimator would inherit all the performance guarantees proved for chordal graphs; unfortunately, this does not hold in general. Thus, so far our theory does not readily extend to non-chordal graphs. Nevertheless, the empirical performance of this heuristic estimator is competitive with $ \cchat $ in both performance (see \prettyref{fig:nonchordal}) and computational efficiency. Indeed, there are polynomial time algorithms that add at most $ 8k^2 $ edges if at least $ k $ edges must be added to make the graph chordal \cite{Natanzon2000}.\footnote{An implementation of graph triangulation $\texttt{R}$ is provided by the $\texttt{is\char`_chordal()}$ function in the package $\texttt{igraph}$ \cite{igraph}.} In view of the theoretical guarantees in \prettyref{thm:chordal}, it is better to be conservative with adding edges so as the maximal degree $ d $ and the clique number $ \omega $ are kept small. 


It should be noted that blindly applying estimators designed for chordal graphs to the subsampled non-chordal graph without triangulation leads to nonsensical estimates. Thus, preprocessing the graph appears to be necessary for producing good results. We will leave the task of rigorously establishing these heuristics for future work.

\section{Lower bounds} \label{sec:lower}

\subsection{General strategy}
	\label{sec:lb-general}

Next we give a general lower bound for estimating additive graph properties (e.g. the number of connected components, subgraph counts) under the Bernoulli sampling model.
The proof uses the method of two fuzzy hypotheses \cite[Theorem 2.15]{Tsybakov2009}, which, in the context of estimating graph properties, entails constructing a pair of random graphs whose properties have different average values, and the distributions of their subsampled versions are close in total variation, which is 
ensured by matching lower-order subgraph counts or sampling certain configurations on their vertices.
The utility of this result is to use a pair of smaller graphs (which can be found in an ad hoc manner) to construct a bigger pair of graphs on $N$ vertices and produce a lower bound that scales with $N$. The proof of \prettyref{thm:mainlb} is furnished in \prettyref{app:proofs}.
\begin{theorem} \label{thm:mainlb}
Let $ f $ be a graph parameter that is invariant under isomorphisms and \emph{additive} under disjoint union, i.e., $f(G+H)=f(G)+f(H)$ \cite[p.~41]{Lovasz12}. 
Let $ \calG $ be a class of graphs with at most $ N $ vertices.
Let $m$ and $M = N/m$ be integers.
Let $ H $ and $ H' $ be two graphs with $ m $ vertices. 
Assume that any disjoint union of the form $G_1+\dots+G_M$ is in $\calG$ where $G_i$ is either $H$ or $H'$.
Suppose $ M \geq 300 $ and $ \TV(P,P') \leq 1/300 $,
where $P$ (resp.~$P'$) denote the distribution of the isomorphism class of the sampled graph $\tH$ (resp.~$\tH'$).
Let $\tG$ denote the sampled version of $G$ under the Bernoulli sampling model with probability $p$.
Then
\begin{equation}
\inf_{\widehat{f}}\sup_{G\in\calG}\prob{|\widehat{f}\big(\tG\big)-f(G)| \geq \Delta } \geq 0.01,
\label{eq:mainlb}
\end{equation}
where $\Delta \triangleq \frac{|f(H)-f(H')|}{8}\left({\sqrt{\frac{N}{m\TV(P,P')}}}  \wedge \frac{N}{m}\right)$.
\end{theorem}

\subsection{Bounding total variations between sampled graphs}

The application of \prettyref{thm:mainlb} relies on the construction of a pair of small graphs $ H $ and $ H' $ whose sampled versions are close in total variation. To this end, we provide two schemes to bound $ \TV(P_{\tH}, P_{\tH'}) $ from above.

\subsubsection{Matching subgraphs}

Since $ \cc(G) $ is invariant with respect to isomorphisms, it suffices to describe the
sampled graph $ \tG $ up to isomorphisms. It is well-known that a graph $G$ can be determined up to isomorphisms by its homomorphism numbers that count the number of ways to embed a smaller graph in $G$. 
Among various versions of graph homomorphism numbers (cf.~\cite[Sec 5.2]{Lovasz12}) the one that is most relevant to the present paper is $\s(H, G)$, which, as defined in \prettyref{sec:intro}, is 
the number of \emph{vertex-induced} subgraphs of $ G $ that are isomorphic to $ H $.
Specifically, the relevance of induced subgraph counts to the subgraph sampling model is two-fold:
\begin{itemize}
\item The list of vertex-induced subgraph counts $ \{ \s(H, G) : \vertex(H) \leq N \} $ determines $ G $ up to isomorphism and hence constitutes a sufficient statistic for $ \tG $. In fact, it is further sufficient to summarize $\tG$ into the list of numbers:\footnote{This statistic cannot be further reduced because it is known that the connected subgraphs counts do not fulfill any predetermined relations in the sense that the closure of the range of their normalized version (subgraph densities) has nonempty interior \cite{Erdos1979}.}
$\{ \s(H, \tG) : \vertex(H) \leq N, \; H \; \text{is connected} \}$,
  since the count of any disconnected subgraph is a fixed polynomial of connected subgraph counts. 
This is a well-known result in the theory of graph reconstruction \cite{Whitney1932,Erdos1979,Kocay1982}.
For example, for any graph $ G $, we have $\s(\Vertex\;\Vertex,\; G) = \binom{\s(\Vertex,\; G)}{2} - \s(\Edge,\; G)$
and
\begin{align*}
\s(\DoubleEdge,\; G) & = \binom{\s(\Edge,\; G)}{2} - \s(\BrokenTriangle,\; G) - 3\s(\Triangle,\; G) - \s(\PathFour,\; G) \\ & \qquad -2\s(\Square,\; G) - \s(\Paw,\; G) - 2\s(\Diamond,\; G) - 3\s(\Kfour,\; G),
\end{align*}
which can be obtained by counting pairs of vertices or edges in two different ways, respectively.
See \cite[Section 2]{McKay1997} for more examples.


	\item Under the Bernoulli sampling model, the probabilistic law of the isomorphism class of the sampled graph is 
a polynomial in the sampling ratio $p$, with coefficients given by the induced subgraph counts.	Indeed, recall from \prettyref{eq:pmf-bern} that $\pprob{\tG \simeq H} = \s(H,G) p^{\sfv(H)}(1-p)^{\sfv(G)-\sfv(H)}$.
Therefore two graphs with matching subgraph counts for all (connected) graphs of $n$ vertices are statistically indistinguishable unless more than $n$ vertices are sampled. 
\end{itemize}

We begin with a refinement of the classical result that says disconnected subgraphs counts are fixed polynomials of connected subgraph counts. Below we provide a more quantitative version by showing that only those connected subgraphs which contain no more vertices than the disconnected subgraph involved. The proofs of the next set of results are given in \prettyref{app:proofs}.

\begin{lemma} \label{lmm:kocay}
Let $ H $ be a disconnected graph of $ v $ vertices. Then for any $G$,
 $ \s(H, G) $ can be expressed as a polynomial, independent of $ G $, in $\{ \s(g, G): \text{$ g $ is connected and $ \vertex(g) \leq v $}\}$.
\end{lemma}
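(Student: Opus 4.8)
The plan is to proceed by induction on the number $v$ of vertices of the disconnected graph $H$, exploiting the multiplicativity of vertex-induced subgraph counts under disjoint union together with a counting-in-two-ways argument. The base case $v=2$ is immediate: the only disconnected graph is $2K_1=\Vertex\;\Vertex$, and the identity $\s(\Vertex\;\Vertex, G) = \binom{\s(K_1,G)}{2} - \s(K_2,G)$ exhibits it as a polynomial in connected subgraph counts on at most $2$ vertices. For the inductive step, write $H = H_1 + H_2$ where $H_1$ is a connected component of $H$ (so $\sfv(H_1)<v$) and $H_2 = H \setminus H_1$ has $v - \sfv(H_1) < v$ vertices; $H_2$ may itself be connected or disconnected.

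The key step is the following counting identity. Fix $G$ and consider ordered pairs $(U_1, U_2)$ of \emph{disjoint} vertex subsets of $G$ with $G[U_1]\simeq H_1$ and $G[U_2]\simeq H_2$. On the one hand, the number of such pairs is a sum over all graphs $F$ on $v$ vertices of $\s(F,G)$ times a purely combinatorial coefficient (the number of ways $F$ decomposes into an induced copy of $H_1$ and a vertex-disjoint induced copy of $H_2$ on the complementary vertices); in this sum the term $F = H$ appears, and by multiplicativity of $\s(\cdot,G)$ over disjoint union it contributes $\s(H_1,G)\,\s(H_2,G)$ to the ``diagonal.'' More precisely, one has
\begin{equation*}
\s(H_1, G)\,\s(H_2, G) \;=\; \sum_{F:\, \sfv(F)=v} \mathsf{c}(H_1, H_2; F)\, \s(F, G),
\end{equation*}
where $\mathsf{c}(H_1,H_2;F)$ counts the number of ordered pairs of complementary vertex subsets of $F$ inducing $H_1$ and $H_2$ respectively, and in particular $\mathsf{c}(H_1,H_2;H)\geq 1$. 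Solving for the $F=H$ term,
\begin{equation*}
\s(H, G) \;=\; \frac{1}{\mathsf{c}(H_1,H_2;H)}\left( \s(H_1,G)\,\s(H_2,G) - \sum_{F \neq H:\, \sfv(F)=v} \mathsf{c}(H_1,H_2;F)\,\s(F,G) \right),
\end{equation*}
and every graph $F$ appearing on the right with a nonzero coefficient that is disconnected has $\sfv(F)=v$ but, crucially, arises from a decomposition into $H_1$ and $H_2$, so it has \emph{more connected components} than $H$ — or else, for the disconnected $F$'s on exactly $v$ vertices that are not handled this way, I reorganize the induction to be on the lexicographic pair (number of vertices, negative number of components), which is well-founded. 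By the inductive hypothesis each $\s(F,G)$ with $F$ disconnected and $\sfv(F)\leq v$ (and strictly more components, or fewer vertices) is already a polynomial in $\{\s(g,G): g \text{ connected}, \sfv(g)\leq v\}$; and $\s(H_1,G), \s(H_2,G)$ are handled by applying the statement to $H_2$ (induction) and noting $H_1$ is connected. Substituting gives the claim for $H$, with the bound $\sfv(g)\leq v$ preserved throughout since no graph on more than $v$ vertices ever enters.

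The main obstacle is bookkeeping the induction correctly: a naive induction on $v$ alone does not close, because the sum over $F$ on the right-hand side contains disconnected graphs on exactly $v$ vertices. The fix — ordering the induction by the pair $(\sfv(H), \cc(H))$ with more components treated as "smaller," since the decomposition $H = H_1 + H_2$ strictly increases the component count among the error terms $F$ relative to what we can already express — needs to be stated carefully, together with the verification that $\mathsf{c}(H_1,H_2;F) \neq 0$ forces $\cc(F) \geq \cc(H_1)+\cc(H_2) > \cc(H_2) \geq \cc(H) - \cc(H_1) $ in a way that makes the recursion terminate. An alternative, cleaner route that avoids this is to invoke the Kocay-type vertex version of Whitney's theorem directly, tracking vertex counts through the standard proof; but carrying out the disjoint-union/Möbius-inversion argument explicitly as above makes the vertex bound $\sfv(g)\leq v$ transparent, which is the whole point of the refinement.
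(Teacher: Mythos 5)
Your high-level strategy---expanding a product of induced-subgraph counts into a linear combination of counts of graphs on at most $v$ vertices and then inducting---is essentially the one the paper uses; the paper invokes Kocay's Vertex Theorem with $\calH$ equal to the full set of connected components of $H$ and inducts on $v$, whereas you peel off one component at a time. You also correctly put your finger on the genuine obstacle, namely the disconnected graphs $F$ on exactly $v$ vertices that can appear on the right-hand side. But your execution has two concrete errors. First, the displayed identity is false as stated: $\s(H_1,G)\,\s(H_2,G)$ counts \emph{all} ordered pairs $(U_1,U_2)$ with $G[U_1]\simeq H_1$ and $G[U_2]\simeq H_2$, including pairs with $U_1\cap U_2\neq\emptyset$, while the right-hand side restricted to $\sfv(F)=v$ counts only the disjoint pairs (take $H_1=H_2=K_1$ and $G=K_1$: the left side is $1$ and the right side is $0$). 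The correct identity carries additional terms indexed by graphs $F$ with $\sfv(F)<v$, coming from overlapping pairs; these are absorbable by the induction on the vertex count, but they must appear.

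The more serious error is that your component-count inequality points the wrong way, so the reorganized induction does not terminate. If $\mathsf{c}(H_1,H_2;F)\neq 0$ and $\sfv(F)=v$, then $F$ is obtained from $H=H_1+H_2$ by adding edges between the two parts, and adding such edges can only \emph{merge} components. Hence $\cc(F)\leq \cc(H_1)+\cc(H_2)=\cc(H)$, with equality if and only if $F\simeq H$; your claimed bound $\cc(F)\geq \cc(H_1)+\cc(H_2)$ is false. Consequently every disconnected error term $F\neq H$ on $v$ vertices has \emph{strictly fewer} components than $H$, and under your ordering (more components treated as smaller) such an $F$ is \emph{larger} than $H$, so the recursion never bottoms out. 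The repair is to run the secondary induction in the opposite direction: among disconnected graphs on $v$ vertices, order by \emph{increasing} number of components (equivalently, by increasing number of edges). The base case $\cc(H)=2$ works because then every $F\neq H$ on $v$ vertices with nonzero coefficient satisfies $\cc(F)=1$, i.e., is connected. With that reversal, and with the missing lower-order terms restored, your argument closes and yields the same statement, including the bound $\vertex(g)\leq v$.
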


\begin{corollary} \label{cor:counts}
Suppose $ H $ and $ H' $ are two graphs in which $ \s(h, H) = \s(h, H') $ for all connected $ h $ with $ \sfv(h) \leq v $. Then $ \s(h, H) = \s(h, H') $ for all $ h $ with $ \sfv(h) \leq v $.
\end{corollary}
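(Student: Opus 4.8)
The plan is to derive the corollary directly from \prettyref{lmm:kocay} by induction on the number of vertices $v$ (or, equivalently, on the size of $h$). The key point is that \prettyref{lmm:kocay} expresses the count of any \emph{disconnected} subgraph $H$ on $v$ vertices as a universal polynomial in the counts $\s(g,\cdot)$ over \emph{connected} $g$ with $\sfv(g)\le v$; combined with the hypothesis that $H$ and $H'$ agree on all connected subgraphs of size $\le v$, this will force agreement on disconnected subgraphs too.

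First I would fix $v$ and let $h$ be an arbitrary graph with $\sfv(h)\le v$. If $h$ is connected, then $\s(h,H)=\s(h,H')$ is precisely one of the hypotheses, so there is nothing to prove. If $h$ is disconnected, apply \prettyref{lmm:kocay} with $v$ replaced by $\sfv(h)$: there is a polynomial $\Phi_h$, not depending on the host graph, such that for every $G$,
\begin{equation*}
\s(h,G) = \Phi_h\big( (\s(g,G))_{g \text{ connected},\, \sfv(g)\le \sfv(h)} \big).
\end{equation*}
Evaluating this identity at $G=H$ and at $G=H'$, and noting that every connected $g$ appearing as an argument has $\sfv(g)\le\sfv(h)\le v$ and hence satisfies $\s(g,H)=\s(g,H')$ by hypothesis, the two evaluations feed $\Phi_h$ identical inputs and therefore produce identical outputs. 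Thus $\s(h,H)=\s(h,H')$, completing the argument for this $h$; since $h$ was arbitrary, the conclusion follows for all $h$ with $\sfv(h)\le v$.

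There is no real obstacle here — the corollary is essentially a one-line consequence of \prettyref{lmm:kocay} — so the main thing to be careful about is purely bookkeeping: one must check that the polynomial $\Phi_h$ in \prettyref{lmm:kocay} genuinely involves only connected subgraphs of size at most $\sfv(h)$ (which is exactly the quantitative refinement that lemma provides over the classical statement), and that this bound is $\le v$ so that the hypothesis applies verbatim. No induction is even strictly needed, since \prettyref{lmm:kocay} already handles each disconnected $h$ in one shot; I would simply state the case split (connected vs.\ disconnected $h$) and invoke the lemma in the disconnected case.
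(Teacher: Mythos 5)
Your proof is correct and matches the paper's intended argument: the corollary is stated as an immediate consequence of \prettyref{lmm:kocay}, obtained exactly as you describe by splitting into connected and disconnected $h$ and, in the disconnected case, evaluating the universal polynomial from the lemma at $H$ and $H'$. Your observation that the quantitative refinement (the polynomial involves only connected subgraphs on at most $\sfv(h)\leq v$ vertices) is what makes the hypothesis apply verbatim is precisely the point of the lemma's strengthened form.
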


\begin{lemma} \label{lmm:matching}
Let $ H $ and $ H' $ be two graphs on $ m $ vertices. If
\begin{equation} \label{eq:matching}
\s(h, H) = \s(h, H')
\end{equation}
for all connected graphs $ h $ with at most $ k $ vertices with $k \in [m]$, then
\begin{equation}
\TV(P_{\tH}, P_{\tH'}) \leq \prob{\Binom(m, p) \geq k+1} \leq \binom{m}{k+1}p^{k+1}.
\label{eq:tvmatch}
\end{equation}
Furthermore, if $ p \leq (k+1)/m $, then
\begin{equation} \label{eq:exp-ineq}
\TV(P_{\tH}, P_{\tH'}) \leq \exp\left\{-\frac{2(k+1-pm)^2}{m}\right\}.
\end{equation}
\end{lemma}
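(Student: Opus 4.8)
The plan is to bound the total variation between the sampled graphs $\tH$ and $\tH'$ by conditioning on the number of sampled vertices. Under Bernoulli($p$) sampling, the number of vertices in $\tH$ is distributed as $\Binom(m,p)$, and conditionally on sampling exactly $j$ vertices, the sampled subgraph is a uniformly chosen vertex-induced subgraph of $H$ on $j$ vertices (by \prettyref{eq:pmf-uniform} the conditional law assigns mass $\s(h,H)/\binom{m}{j}$ to each isomorphism class $h$ with $\sfv(h)=j$). Crucially, this same coupling of ``number sampled'' to ``conditional law'' holds for both $H$ and $H'$, with the identical marginal $\Binom(m,p)$ on the number of sampled vertices. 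So I would couple the two experiments on the event that they sample the same number of vertices, and then argue that whenever that number is at most $k$, the conditional distributions coincide.

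The key step is the observation that for each $j \le k$, the conditional laws of $\tH$ and $\tH'$ given exactly $j$ sampled vertices are \emph{identical}. By \prettyref{cor:counts}, the hypothesis \prettyref{eq:matching} on connected $h$ with $\sfv(h)\le k$ upgrades to $\s(h,H)=\s(h,H')$ for \emph{all} $h$ with $\sfv(h)\le k$; hence for every isomorphism class $h$ on $j\le k$ vertices, $\Prob[\tH\simeq h \mid |S|=j] = \s(h,H)/\binom{m}{j} = \s(h,H')/\binom{m}{j} = \Prob[\tH'\simeq h \mid |S|=j]$. Therefore, writing $P_{\tH} = \sum_{j=0}^m \Prob[\Binom(m,p)=j]\, P_{\tH}^{(j)}$ and likewise for $H'$, the terms for $j=0,\dots,k$ cancel in $P_{\tH}-P_{\tH'}$, and we get
\begin{equation*}
\TV(P_{\tH},P_{\tH'}) \le \sum_{j=k+1}^m \Prob[\Binom(m,p)=j] = \Prob[\Binom(m,p)\ge k+1],
\end{equation*}
using that each $\TV(P_{\tH}^{(j)},P_{\tH'}^{(j)})\le 1$. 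The bound $\Prob[\Binom(m,p)\ge k+1]\le \binom{m}{k+1}p^{k+1}$ follows from a union bound over $(k+1)$-subsets (or a standard binomial tail estimate), giving \prettyref{eq:tvmatch}.

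For the refined bound \prettyref{eq:exp-ineq}, I would invoke Hoeffding's inequality for the binomial: when $p \le (k+1)/m$, we have $k+1 - pm \ge 0$, so $\Prob[\Binom(m,p) \ge k+1] = \Prob[\Binom(m,p) - pm \ge k+1-pm] \le \exp\{-2(k+1-pm)^2/m\}$, since $\Binom(m,p)$ is a sum of $m$ independent $[0,1]$-valued random variables. Combining with the first display yields \prettyref{eq:exp-ineq}.

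I do not anticipate a serious obstacle here; the only point requiring care is the clean justification that the conditional-on-$|S|=j$ laws agree for $j\le k$, which hinges on correctly quoting \prettyref{cor:counts} to pass from connected to all subgraph counts. One should also note the edge case $k=m$, where the tail probability $\Prob[\Binom(m,p)\ge m+1]=0$ and the claim is trivial, consistent with $H$ and $H'$ being isomorphic when all induced subgraph counts match.
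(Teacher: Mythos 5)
Your proposal is correct and follows essentially the same route as the paper's proof: invoke \prettyref{cor:counts} to upgrade the matching of connected subgraph counts to all subgraph counts of order at most $k$, observe that conditioned on $|S|=j$ the sampled graph is uniform over $j$-vertex induced subgraphs so the conditional laws agree for $j\le k$, and then bound the total variation by the binomial tail, finishing with the union bound and Hoeffding. No gaps.
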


In \prettyref{fig:cycle-lower}, we give an example of two graphs $ H $ and $ H' $ on $ 8 $ vertices that have matching counts of connected subgraphs with at most $ 4 $ vertices. Thus, by \prettyref{lmm:matching}, they also have matching counts of \emph{all} subgraphs with at most $ 4 $ vertices, and if $ p \leq 5/8 $, then $ \TV(P_{\tH}, P_{\tH'}) \leq e^{-\frac{25}{4}(1-\frac{8p}{5})^2} $.

\subsubsection{Labeling-based coupling}
It is well-known that for any probability distributions $P$ and $P'$, the total variation is given by $\TV(P,P') = \inf \prob{X \neq X'}$, where the infimum is over all couplings, i.e., joint distributions of $X$ and $X'$ that are marginally distributed as $P$ and $P'$ respectively.
There is a natural coupling between the sampled graphs $ \tH $ and $ \tH' $ when we define the parent graph $H$ and $H'$ on the same set of labelled vertices. In some of the applications of \prettyref{thm:mainlb}, the constructions of $H$ and $H'$ are such that if certain configurations of the vertices are included or excluded in the sample, the resulting graphs are isomorphic. This property allows us to bound the total variation between the sampled graphs as follows.

\begin{lemma} \label{lmm:coupling}
Let $ H $ and $ H' $ be graphs defined on the same set of vertices $ V $. Let $U$ be a subset of $V$ and suppose that for any $ u \in U $, we have $ H[V \setminus \{u\} ] \simeq H'[V \setminus \{u\}] $. 
Then, the total variation $ \TV(P_{\tH}, P_{\tH'}) $ can be bounded by the probability that every vertex in $ U $ is sampled, viz.,
\begin{equation*}
\TV(P_{\tH}, P_{\tH'}) \leq 1-\prob{\tH \simeq \tH'} \leq p^{|U|}.
\end{equation*}
If, in addition, $ H[U] \simeq H'[U] $, then the total variation $ \TV(P_{\tH}, P_{\tH'}) $ can be bounded by the probability that every vertex in $ U $ is sampled and at least one vertex in $ V \setminus U $ is sampled, viz.,
\begin{equation*}
\TV(P_{\tH}, P_{\tH'}) \leq p^{|U|}(1-(1-p))^{|V|-|U|}.
\end{equation*}
\end{lemma}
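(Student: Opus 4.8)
The plan is to bound $\TV(P_{\tH},P_{\tH'})$ by exhibiting a coupling and invoking the identity $\TV(P_{\tH},P_{\tH'})=\inf\prob{X\neq X'}$ over all couplings of $X\sim P_{\tH}$ and $X'\sim P_{\tH'}$, where $X,X'$ are the isomorphism classes of the sampled graphs. Since $H$ and $H'$ share the vertex set $V$, the natural coupling is to draw a single family of independent $\Bern(p)$ indicators $\{b_v\}_{v\in V}$, put $S=\{v:b_v=1\}$, and read off $\tH=H[S]$ and $\tH'=H'[S]$ from the same $S$; this already gives $\TV(P_{\tH},P_{\tH'})\le 1-\prob{\tH\simeq\tH'}\le\prob{\tH\not\simeq\tH'}$.

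The first step is to show $\{\tH\not\simeq\tH'\}\subseteq\{U\subseteq S\}$. On the complementary event there is some $u\in U$ with $u\notin S$, so $S\subseteq V\setminus\{u\}$, and $H[S]$ (resp.\ $H'[S]$) is exactly the subgraph induced by $S$ inside $H[V\setminus\{u\}]$ (resp.\ inside $H'[V\setminus\{u\}]$). The hypothesis $H[V\setminus\{u\}]\simeq H'[V\setminus\{u\}]$, realized by a vertex bijection of $V\setminus\{u\}$ carrying one graph to the other, then carries $H[S]$ onto $H'[S]$, whence $\tH\simeq\tH'$. Consequently $\TV(P_{\tH},P_{\tH'})\le\prob{U\subseteq S}=p^{|U|}$, the last equality because the $b_u$, $u\in U$, are independent $\Bern(p)$.

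For the refinement, assume in addition $H[U]\simeq H'[U]$. Split $\{U\subseteq S\}$ into $\{S=U\}$ and $\{U\subseteq S,\ S\cap(V\setminus U)\neq\emptyset\}$. On the former, $\tH=H[U]\simeq H'[U]=\tH'$, so the mismatch event is contained in the latter, whose probability factorizes as $p^{|U|}\bigl(1-(1-p)^{|V\setminus U|}\bigr)$ by independence of $\{b_v:v\in U\}$ and $\{b_v:v\in V\setminus U\}$. This is the asserted bound (every vertex of $U$ sampled and at least one vertex of $V\setminus U$ sampled).

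The delicate step — the one I expect to be the main obstacle — is the transfer used in the second paragraph: a bare abstract isomorphism $H[V\setminus\{u\}]\simeq H'[V\setminus\{u\}]$ need not restrict to an isomorphism of the subgraphs induced on the common vertex set $S$. In the constructions to which this lemma is applied, $H$ and $H'$ actually coincide on $V\setminus\{u\}$ for each $u\in U$, so the witnessing map may be taken to be the identity and the transfer is immediate. In the general case one would instead couple by transporting the $H'$-side sample through a fixed witnessing bijection $\phi_u$ of $V\setminus\{u\}$ and then verify that, since $\phi_u$ preserves the $\Bern(p)$ product measure on $V\setminus\{u\}$, the transported sample retains the correct law; this marginal-consistency check is where the real work lies, particularly when $|U|\ge 2$ and several such transports must be reconciled.
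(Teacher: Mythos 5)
Your proof follows the route the paper itself intends (the natural coupling in which a single Bernoulli sample $S$ is read off against both labelled graphs), and you have correctly located the weak point: the inclusion $\{\tH\not\simeq\tH'\}\subseteq\{U\subseteq S\}$ needs an isomorphism $H[V\setminus\{u\}]\simeq H'[V\setminus\{u\}]$ to \emph{restrict} to an isomorphism $H[S]\simeq H'[S]$ on every common subset $S\subseteq V\setminus\{u\}$, which a bare abstract isomorphism does not provide. However, neither of your proposed resolutions closes the gap. First, it is not true that in the paper's applications the witnessing map can be taken to be the identity: in the chordal construction of Theorem~\ref{thm:chordallower} (Case I), the pendant groups $V_S$ are attached to even subsets in $H$ and odd subsets in $H'$, so $H$ and $H'$ do \emph{not} coincide as labelled graphs on $V\setminus\{u\}$; the vertex-deleted subgraphs are isomorphic only via a nontrivial relabelling of the pendant groups, and one can exhibit sets $S$ with $U\not\subseteq S$ and $H[S]\not\simeq H'[S]$. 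Second, the ``transport through $\phi_u$'' repair cannot succeed in general, because the lemma is not a consequence of the stated hypothesis alone. Take $V=\{u_1,u_2,a,b,c,d\}$, $U=\{u_1,u_2\}$, let $H$ have edges $\{u_1a,u_1b,u_2c,u_2d\}$ and $H'$ have edges $\{ab,bc\}$. Then $H[V\setminus\{u_i\}]\simeq H'[V\setminus\{u_i\}]\simeq P_3+2K_1$ for $i=1,2$, yet $\s(K_2,H)=4$ and $\s(K_2,H')=2$, so
\begin{equation*}
\TV(P_{\tH},P_{\tH'})\;\geq\;|\s(K_2,H)-\s(K_2,H')|\,p^2(1-p)^4\;=\;2p^2(1-p)^4\;>\;p^2=p^{|U|}
\end{equation*}
for all $p<1-2^{-1/4}$. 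So no marginal-consistency check can rescue the argument as written.

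What does hold from the stated hypothesis is the case $|U|=1$: conditioned on $u\notin S$, the sample is a Bernoulli sample of $V\setminus\{u\}$, and since the product measure is exchangeable, the law of the isomorphism class of $H[S]$ depends only on the isomorphism class of $H[V\setminus\{u\}]$; hence the conditional laws agree and $\TV\leq p$. For $|U|\geq 2$ one must further condition on the trace $S\cap U$, and the required identity of conditional laws given $S\cap U=W$ for each proper $W\subsetneq U$ is a strictly stronger structural property of the pair $(H,H')$ than isomorphism of the single-vertex-deleted subgraphs; it has to be verified directly for the specific constructions (where it does hold, e.g.\ by the symmetry exchanging the pendant groups). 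Your write-up, like the paper's one-line justification, does not supply this verification, and as the counterexample shows, it cannot be supplied from the hypotheses as stated. The second half of your argument (splitting $\{U\subseteq S\}$ into $\{S=U\}$ and its complement to obtain the factor $1-(1-p)^{|V|-|U|}$) is fine once the first half is repaired.
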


In \prettyref{fig:main1}, we give an example of two graphs $ H $ and $ H' $ satisfying the assumption of \prettyref{lmm:coupling}. In this example, $ |U| = 2 $, and $ |V| = 8 $. Note that if any of the vertices in $ U $ are removed along with all their incident edges, then the resulting graphs are isomorphic. Also, since $ H[U] \simeq H'[U] $, \prettyref{lmm:coupling} implies that $ \TV(P_{\tH}, P_{\tH'}) \leq p^2(1-(1-p)^6) $.

\begin{figure} [ht]
\centering
\begin{subfigure}[t]{0.2\textwidth}
  \centering
  \includegraphics[width=1.1\linewidth]{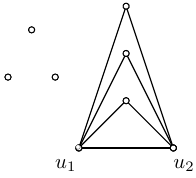}
  \caption{The graph $H$.}
  \label{fig:fig1}
\end{subfigure}%
\hspace{1cm}
\begin{subfigure}[t]{0.3\textwidth}
  \centering
  \includegraphics[width=0.9\linewidth]{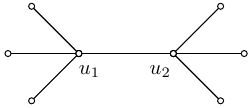}
  \caption{The graph $H'$.}
  \label{fig:fig2}
\end{subfigure}
\hspace{1cm}
\begin{subfigure}[t]{0.3\textwidth}
  \centering
  \includegraphics[width=0.9\linewidth]{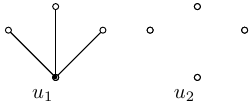}
  \caption{The resulting graph when $ u_1 $ is sampled but not $ u_ 2 $. 
  }
  \label{fig:iso3}
\end{subfigure}
\caption{Example where $ U = \{u_1, u_2 \} $ is an edge. If any of these vertices are not sampled and all incident edges are removed, the resulting graphs are isomorphic.}
\label{fig:main1}
\end{figure}

In the remainder of the section,
we apply \prettyref{thm:mainlb}, \prettyref{lmm:matching}, and \prettyref{lmm:coupling} to derive lower bounds on the minimax risk for graphs that contain cycles and general chordal graphs, respectively. The main task is to handcraft a pair of graphs $ H $ and $ H' $ that either have matching counts of small subgraphs \emph{or} for which certain configurations of their vertices induce subgraphs that are isomorphic.

\subsection{Lower bound for chordal graphs} \label{sec:chordal}

\begin{theorem}[Chordal graphs] \label{thm:chordallower}
Let $ \calG(N,\de,\omega) $ denote the collection of all chordal graphs on $N$ vertices with maximum degree and clique number at most $ \de $ and $ \omega \geq 2 $, respectively. Assume that $ p < \frac{1}{2^{\omega}100} $. Then
\begin{equation*}
\inf_{\cchat}\sup_{G\in\calG(N,\de,\omega)} \Expect_G|\cchat-\cc(G)|^2  = \Theta_{\omega}\left(\left(\frac{N}{p^{\omega}} \vee \frac{N\de}{p^{\omega-1}} \right) \wedge N^2 \right).
\end{equation*}
\end{theorem}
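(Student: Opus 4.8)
The upper bound is already in hand: by \prettyref{thm:chordal} (the upper half of \prettyref{thm:chordalmain}) the unbiased estimator \prettyref{eq:chordal-estimator} has variance at most $N/p^{\omega}+N\de/p^{\omega-1}$, and the trivial estimator $\cchat\equiv 0$ has risk $\cc(G)^{2}\le N^{2}$; so all the work is the matching lower bound. The plan is to apply \prettyref{thm:mainlb} twice, with two different pairs of chordal ``gadget'' graphs, and take the larger of the two resulting bounds (both being lower bounds on the same minimax risk). In both applications the hypothesis $p<1/(2^{\omega}\cdot100)$ is exactly what makes the total variation between the sampled gadgets smaller than $1/300$; the side condition $M=N/m\ge 300$ holds once $N$ is large in terms of $\omega$ and $\de$, and the residual small-$N$ range is harmless since there $N^{2}=\Theta_{\omega}(1)$. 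We may also assume $\de\ge\omega-1$, since otherwise $\calG(N,\de,\omega)=\calG(N,\de,\de+1)$.

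\emph{Gadget for the $N/p^{\omega}\wedge N^{2}$ term.} Let $H=\sum_{r\ \mathrm{even}}\binom{\omega}{r}K_{r}$ and $H'=\sum_{r\ \mathrm{odd}}\binom{\omega}{r}K_{r}$, disjoint unions of cliques of size at most $\omega$ whose numbers of $K_{r}$-components differ by $(-1)^{r}\binom{\omega}{r}$. Elementary binomial identities show that $H$ and $H'$ have the same number of vertices $m=\omega 2^{\omega-2}=\Theta_{\omega}(1)$ and that $\s(K_{i},H)=\s(K_{i},H')$ for $1\le i\le\omega-1$; since a disjoint union of cliques has no connected induced subgraph other than a clique, $H$ and $H'$ therefore have matching connected induced subgraph counts up to order $\omega-1$, and then \prettyref{lmm:chordal} forces $|\cc(H)-\cc(H')|=|\s(K_{\omega},H)-\s(K_{\omega},H')|=1$. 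By \prettyref{lmm:matching}, $\TV(P_{\tH},P_{\tH'})\le\binom{m}{\omega}p^{\omega}$, which is $\le 1/300$ and also $O_{\omega}(p^{\omega})$. Both graphs are chordal, with clique number $\le\omega$ and maximum degree $\le\omega-1\le\de$, so any disjoint union of $M=N/m$ copies of $H$ or $H'$ lies in $\calG(N,\de,\omega)$; \prettyref{thm:mainlb} then produces $\Delta=\Theta_{\omega}(\sqrt{N/p^{\omega}}\wedge N)$, hence the lower bound $\gtrsim_{\omega}N/p^{\omega}\wedge N^{2}$.

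\emph{Gadget for the $N\de/p^{\omega-1}\wedge N^{2}$ term.} This is the part built to be extremal for the covariance bound in \prettyref{lmm:independent}. The guiding case is $\omega=2$: take $H$ to be the star $S_{\de}$ and $H'$ to be $\de+1$ isolated vertices on the same labelled vertex set; deleting the centre makes $\tH\simeq\tH'$, so \prettyref{lmm:coupling} gives $\TV\le p\bigl(1-(1-p)^{\de}\bigr)=O(p)$, while $|\cc(H)-\cc(H')|=\de$ and $m=\de+1$, and \prettyref{thm:mainlb} yields $\Delta=\Theta(\sqrt{N\de/p}\wedge N)$. For general $\omega$ the target is a chordal pair $H,H'$ on $m=\Theta_{\omega}(\de)$ vertices with clique number $\le\omega$, maximum degree $\le\de$, with $|\cc(H)-\cc(H')|=\Theta_{\omega}(\de)$, and with $\TV(P_{\tH},P_{\tH'})=O_{\omega}(p^{\omega-1})$; the base graph to aim for is the ``book'' consisting of $\Theta_{\omega}(\de)$ copies of $K_{\omega}$ glued along a common $(\omega-1)$-clique $Q$ (the graph making $N(\de-1)$ in \prettyref{lmm:independent} tight), and the companion $H'$ must be engineered so that deleting any single vertex of $Q$ renders $\tH\simeq\tH'$ and $H[Q]\simeq H'[Q]$, whence \prettyref{lmm:coupling} with $U=Q$ gives $\TV\le p^{\omega-1}\bigl(1-(1-p)^{m-\omega+1}\bigr)=O_{\omega}(p^{\omega-1})$. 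Given such a pair, \prettyref{thm:mainlb} yields $\Delta=\Theta_{\omega}(\sqrt{N\de/p^{\omega-1}}\wedge N)$ and hence the lower bound $\gtrsim_{\omega}N\de/p^{\omega-1}\wedge N^{2}$; for $\de\gtrsim_{\omega}N$ one uses a book on $\Theta_{\omega}(N)$ vertices instead, which already caps at $N^{2}$. Combining the two gadgets proves the theorem.

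\emph{Where the difficulty lies.} The first gadget and the arithmetic of \prettyref{thm:mainlb} are routine. The crux is the construction of the second gadget's companion $H'$: the naive ``detach the apices'' choice (keeping $Q$ a clique but isolating the apex vertices) fails for $\omega\ge 3$, since then sampling one vertex of $Q$ together with one apex already distinguishes $\tH$ from $\tH'$, giving only $\TV\asymp p$ rather than $p^{\omega-1}$. One must instead hide the entire $\Theta_{\omega}(\de)$-sized discrepancy behind $Q$ by exploiting the relabelling freedom in the isomorphisms $H[V\setminus\{x\}]\simeq H'[V\setminus\{x\}]$, $x\in Q$ --- equivalently, arranging the apex attachments so that no configuration of fewer than $\omega-1$ vertices of $Q$ can be used to tell the two graphs apart --- while simultaneously keeping $H$ and $H'$ chordal, of clique number $\le\omega$, and of maximum degree $\le\de$, and pinning down $|\cc(H)-\cc(H')|$ exactly. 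Verifying all of these properties at once is the delicate combinatorial heart of the argument.
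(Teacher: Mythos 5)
There is a genuine gap: the gadget responsible for the $N\de/p^{\omega-1}$ term is never actually constructed. Your first gadget (disjoint unions of cliques with even/odd parity multiplicities $\binom{\omega}{r}$) is exactly the paper's Case~II construction and correctly delivers $\frac{N}{p^{\omega}}\wedge N^{2}$; your analysis of it is sound. But for the second gadget you only list the properties a pair $(H,H')$ would need --- chordal, clique number $\leq\omega$, degree $\leq\de$, $|\cc(H)-\cc(H')|=\Theta_{\omega}(\de)$, and $H[V\setminus\{x\}]\simeq H'[V\setminus\{x\}]$ for every $x$ in an $(\omega-1)$-clique $Q$ --- and you explicitly concede that the naive companion fails for $\omega\geq 3$ and that building a correct $H'$ is ``the delicate combinatorial heart of the argument.'' That heart is precisely what a proof must supply, and it is the main combinatorial content of the paper's lower bound. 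Without it your argument only establishes $\frac{N}{p^{\omega}}\wedge N^{2}$, which is strictly weaker than the claimed rate whenever $\de>1/p$.

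For comparison, the paper's Case~I resolves exactly the difficulty you identified, and not with a ``book'': fix a clique $U$ of size $\omega-1$ and, for \emph{every} subset $S\subset U$ of even cardinality (including $S=\emptyset$), attach $m$ fresh vertices whose neighborhood is exactly $S$; the companion $H'$ does the same for the odd-cardinality subsets. The binomial identity $\sum_{j}(-1)^{j}\binom{\omega-1}{j}\binom{j}{i-1}=0$ gives matching vertex and clique counts, removing any $u\in U$ produces isomorphic graphs because the even and odd subsets of $U\setminus\{u\}$ are put in bijection, $|\cc(H)-\cc(H')|=m\asymp\de2^{-\omega}$ via the $S=\emptyset$ isolated vertices, and \prettyref{lmm:coupling} gives $\TV(P_{\tH},P_{\tH'})\leq p^{\omega-1}\bigl(1-(1-p)^{m2^{\omega-2}}\bigr)\leq p^{\omega-1}(p\de\wedge 1)$. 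Note the extra factor $p\de\wedge 1$: it makes this single gadget deliver $\bigl(\frac{N}{p^{\omega}}\vee\frac{N\de}{p^{\omega-1}}\bigr)\wedge N^{2}$ in the regime $\de\geq 2^{\omega}$, rather than only the $\frac{N\de}{p^{\omega-1}}$ term you were aiming for; the clique gadget is then needed only when $\de<2^{\omega}$, where $\frac{N\de}{p^{\omega-1}}=O_{\omega}\bigl(\frac{N}{p^{\omega}}\bigr)$. So your overall architecture (two gadgets, \prettyref{thm:mainlb}, take the max) matches the paper's, but the decisive construction is missing.
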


\begin{proof}
There are two different constructions we give, according to whether $ d \geq 2^{\omega} $ or $ d < 2^{\omega} $.
\paragraph{Case I: $ d \geq 2^{\omega} $.} For every $\omega \geq 2$ and $m \in \naturals$, we construct a pair of graphs $H$ and $H'$, such that
\begin{align}
\vertex(H)= & ~ \vertex(H')= \omega-1+ m 2^{\omega-2}  \label{eq:chordalHH1}\\
\de_{\max}(H)= & ~ \de_{\max}(H')= m 2^{\omega-3} + \omega-2 \label{eq:chordalHH2}, \qquad \omega \geq 3\\
\de_{\max}(H) = & ~ 0, \quad \de_{\max}(H') = m, \qquad \omega = 2\\
\cc(H)= & ~m+1, \quad \cc(H')= 1  \label{eq:chordalHH3}\\
|\s(K_\omega,H) - \s(K_\omega,H')| = & ~m \label{eq:chordalHH4}
\end{align}
Fix a set of $ \omega - 1$ vertices $U$ that forms a clique.
We first construct $H$. 
For every subset $S \subset U$ such that $|S|$ is even, let $V_S$ be a set of $ m $ distinct vertices such that the neighborhood of every $\vertex \in V_S$ is given by $\partial \vertex = S $.
Let the vertex set $V(H)$ be the union of $U$ and all $V_S$ such that $|S|$ is even. In particular, because of the presence of $ S = \emptyset $, $ H $ always has exactly $ m $ isolated vertices (unless $ \omega = 2 $, in which case $ H $ consists of $ m+1 $ isolated vertices).
Repeat the same construction for $H'$ with $|S|$ being odd.
Then both $H$ are $H'$ are chordal and have the same number of vertices as in \prettyref{eq:chordalHH1}, since
\[
\vertex(H)= \omega-1+ m \sum_{0\leq i \leq \omega-1,~i \text{ even}}  \binom{\omega-1}{i} = \vertex(H')= ~ \omega-1+m \sum_{0\leq i \leq \omega-1,~i \text{ odd}}  \binom{\omega-1}{i} 
\]
which follows from the binomial summation formula. 
Similarly, \prettyref{eq:chordalHH2}--\prettyref{eq:chordalHH4} can be readily verified.

We also have that
\begin{align*}
\s(K_i, H) &= \binom{\omega-1}{i}+ m \sum_{0\leq j \leq \omega-1,~j \text{ even}}  \binom{\omega-1}{j} \binom{j}{i-1} = \\
\s(K_i, H') &= \binom{\omega-1}{i}+ m \sum_{0\leq j \leq \omega-1,~j \text{ odd}}  \binom{\omega-1}{j} \binom{j}{i-1} = 
\binom{\omega-1}{i} + m\binom{\omega-1}{i-1}2^{\omega-1-i},
\end{align*}
for $ i = 1, 2, \dots, \omega -1 $. This follows from the fact that
$\sum_{0\leq j \leq \omega-1}(-1)^j \binom{\omega-1}{j} \binom{j}{i-1} = 0$ and $\sum_{0\leq j \leq \omega-1}\binom{\omega-1}{j} \binom{j}{i-1} = \binom{\omega-1}{i-1}2^{\omega-i}$.

To compute the total variation distance between the sampled graphs, we first assume that $ H $ and $ H' $ are defined on the same set of labelled vertices $ V $. The key observation is the following: by construction, $ H[U] \simeq H'[U]$ (since $ U $ induces a clique) and, furthermore, failing to sample any vertex in $U$ results in an isomorphic graph, i.e., $ H[V \setminus \{u \} ] \simeq H'[V \setminus \{ u \}] $ for any $ u \in U $.
Indeed, the structure of the induced subgraph $ H[V \setminus \{u \} ] $ can be described as follows. 
First, let $ U $ form a clique. Next, for every nonempty subset $ S \subset U \setminus \{u\} $, attach a set of $ m $ distinct vertices (denoted by $V_S$) so that the neighborhood of every $\vertex \in V_S$ is given by $\partial \vertex = S $. 
Finally, add $ m+1 $ isolated vertices.
See \prettyref{fig:main1} ($\omega=3$) and \prettyref{fig:main2} ($\omega=4$) for illustrations of this property and the iterative nature of this construction, in the sense that the construction of $ H $ (resp. $ H' $) for $ \omega = k+1 $ can be obtained from the construction of $ H $ (resp. $ H' $) for $ \omega = k $ by adding another vertex $ u $ to $ U $ such that $ \partial u = U $ and then adjoining $ m $ distinct vertices to every even (resp. odd) cardinality set $ S \subset U $ containing $ u $.

Thus by \prettyref{lmm:coupling},
$\TV(P_{\tH}, P_{\tH'} ) \leq p^{|U|}\pth{1-(1-p)^{|V|-|U|}}= p^{\omega-1} (1-(1-p)^{m2^{\omega-2}})$.
According to \prettyref{eq:chordalHH2}, we choose $m = \floor{ (\de -\omega+2) 2^{-\omega+3}} \geq \de 2^{-\omega+2}$ if $ \omega \geq 3 $ and $ m = d $ if $ \omega = 2 $. Then we have,
$\TV(P_{\tH}, P_{\tH'} ) = p^{\omega-1} (1-(1-p)^{d}) \leq p^{\omega-1} (pd \wedge 1)$.
The condition on $ p $ ensures that $ \TV(P_{\tH}, P_{\tH'} ) \leq p < 1/300 $.
In view of \prettyref{thm:mainlb} and \prettyref{eq:chordalHH3}, we have
\begin{equation*}
\inf_{\cchat}\sup_{G\in\calG(N,\de,\omega)}\Expect_G|\cchat-\cc(G)|^2 = \Theta_{\omega} \left( \left(\frac{N}{p^{\omega}} \vee \frac{N\de}{p^{\omega-1}} \right) \wedge N^2 \right),
\end{equation*}
provided $ d \geq 2^{\omega} $.

\begin{figure} [ht]
\centering
\begin{subfigure}[t]{0.33\textwidth}
  \centering
  \includegraphics[width=1\linewidth]{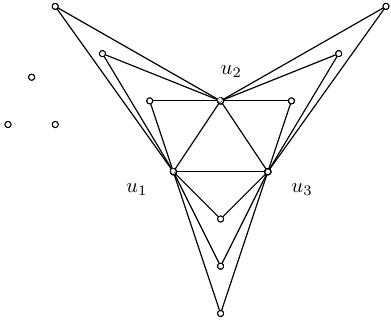}
  \caption{The graph $H$.}
  \label{fig:fig1}
\end{subfigure}%
\hspace{1cm}
\begin{subfigure}[t]{0.21\textwidth}
  \centering
  \includegraphics[width=1\linewidth]{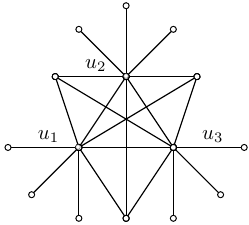}
  \caption{The graph $H'$.}
  \label{fig:fig2}
\end{subfigure}
\hspace{1cm}
\begin{subfigure}[t]{0.22\textwidth}
  \centering
  \includegraphics[width=1\linewidth]{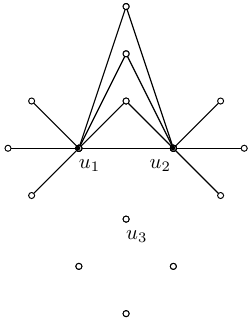}
  \caption{The resulting graph when $ u_1 $ and $ u_2 $ are sampled but not $ u_ 3 $.}
  \label{fig:iso4}
\end{subfigure}
\caption{Example for $ \omega = 4 $ and $ m = 3 $, where $ U = \{ u_1, u_2, u_3 \} $ form a triangle. If any one or two (as shown in the figure) of these vertices are not sampled and all incident edges are removed, the resulting graphs are isomorphic.}
\label{fig:main2}
\end{figure}

\paragraph{Case II: $ d \leq 2^{\omega} $.}
In this case, the previous construction is no longer feasible and we must construct another pair of graphs with a smaller maximum degree. To this end, we consider graphs $ H $ and $ H' $ consisting of disjoint cliques of size at most $ \omega \geq 2 $, such that
\begin{equation}
\vertex(H)= \vertex(H')= \omega2^{\omega-2}, \quad
\de_{\max}(H)= \de_{\max}(H')= \omega-1, \quad
|\cc(H) - \cc(H')| = 1. \label{eq:chordalGG1}
\end{equation}
If $ \omega $ is odd, we set
\begin{equation}
\begin{aligned}
H = & ~  \tbinom{\omega}{\omega}K_{\omega} + \tbinom{\omega}{\omega-2}K_{\omega-2} + \cdots + \tbinom{\omega}{3}K_{3} + \tbinom{\omega}{1}K_{1} \\
H' = & ~  \tbinom{\omega}{\omega-1}K_{\omega-1} + \tbinom{\omega}{\omega-3}K_{\omega-3} + \cdots + \tbinom{\omega}{4}K_{4} + \tbinom{\omega}{2}K_{2}.
\end{aligned}
\label{eq:chordallb-odd}
\end{equation}
If $ \omega $ is even, we set
\begin{equation}
\begin{aligned}
H = & ~  \tbinom{\omega}{\omega}K_{\omega} + \tbinom{\omega}{\omega-2}K_{\omega-2} + \cdots + \tbinom{\omega}{4}K_{4} + \tbinom{\omega}{2}K_{2} \\
H' = & ~  \tbinom{\omega}{\omega-1}K_{\omega-1} + \tbinom{\omega}{\omega-3}K_{\omega-3} + \cdots + \tbinom{\omega}{3}K_{3} + \tbinom{\omega}{1}K_{1}.
\end{aligned}
\label{eq:chordallb-even}
\end{equation}
For example, 
for $\omega=3$, \prettyref{eq:chordallb-odd} becomes $H = \Triangle + 3 \times \Vertex$ and $H' = 3\times \Edge$~;
for $\omega=4$, \prettyref{eq:chordallb-even} becomes $H = \Kfour + 6 \times \Edge$ and $H' = 4 \times \Triangle + 	4 \times \Vertex$.

%
%
%

Next we verify that $H$ and $H'$ have matching subgraph counts. Indeed, for $ i = 1,2,\dots, \omega-1 $,
$\s(K_i, H)-\s(K_i, H') = \sum_{k=i}^{\omega}(-1)^k\tbinom{\omega}{k}\tbinom{k}{i} = 0$ 
and 
$\s(K_i, H)=\s(K_i, H') = \frac{1}{2}\sum_{k=i}^{\omega}\tbinom{\omega}{k}\tbinom{k}{i} = 2^{\omega-1-i}\tbinom{\omega}{i}$.
Hence $ H $ and $ H' $ contain matching number of cliques up to size $ \omega - 1 $.
Note that the only connected induced subgraphs of $ H $ and $ H' $ with at most $ \omega-1 $ vertices are cliques. Consequently, by \prettyref{eq:tvmatch}, $ \TV(P_{\tH}, P_{\tH'}) \leq \binom{\omega2^{\omega-2}}{\omega}p^{\omega} $ and together with \prettyref{thm:mainlb} and \prettyref{eq:chordalGG1}, we have
\begin{equation*}
\inf_{\cchat}\sup_{G\in\calG(N,\de,\omega)}\Expect_G|\cchat-\cc(G)|^2 \geq \Omega_{\omega} \left( \frac{N}{p^{\omega}} \wedge N^2 \right) = 
\Theta_{\omega} \left( \left(\frac{N}{p^{\omega}} \vee \frac{N\de}{p^{\omega-1}} \right) \wedge N^2 \right),
\end{equation*}
where the last inequality follows from the current assumption that $d \leq 2^\omega$. The condition on $ p $ ensures that $ \TV(P_{\tH}, P_{\tH'} ) \leq p2^{\omega-2} < 1/300 $.
\end{proof}

\section*{Acknowledgment}
	\label{sec:ack}
	The authors are grateful to Ben Rossman for fruitful discussions and Richard Stanley for helpful comments on \prettyref{eq:clique-id}.

\appendix

\section{Additional proofs}\label{app:proofs}

In this appendix, we give the proofs of \prettyref{lmm:unique}, \prettyref{lmm:permutation}, \prettyref{lmm:orthogonal}, \prettyref{lmm:independent}, \prettyref{thm:chordal_smooth}, \prettyref{thm:cc-var-clique}, \prettyref{thm:clique_smooth}, \prettyref{thm:mainlb}, \prettyref{lmm:kocay}, and \prettyref{lmm:matching}. We also state the concentration inequality from \prettyref{lmm:janson} that was used in the proof of \prettyref{thm:chordal}.

\begin{proof}[Proof of \prettyref{lmm:unique}]
By \cite[Theorem 5.3.26]{west-book}, the chromatic polynomial of $ G $ is
\begin{equation*}
\chi(G; x) = (x - \sfc_1)\cdots (x-\sfc_N) = (x - \sfc'_1)\cdots (x-\sfc'_N).
\end{equation*}
The conclusion follows from the uniqueness of the chromatic polynomial (and its roots). 
\end{proof}

\begin{proof}[Proof of \prettyref{lmm:permutation}]
Note that $ \{v_1, \dots, v_N\} $ is also a PEO\footnote{When we say a PEO $ \{v_1, \dots, v_N \} $ of $G$ is also a PEO of $ \tG=G[S] $, it is understood in the following sense: for any $ v_j \in S $, $ N_{\tG}(v_j) \cap \{v_i \in S: i < j \} $ is a clique in $ G[S] $. } of $ \tG $ and hence by \prettyref{lmm:unique}, there is a bijection between $ \{ \widetilde{\sfc}_{j} : j \in [m] \} $ and $ \{ \widehat{\sfc}_{j} : j \in [N] \} $. Therefore
\begin{equation*}
\widehat{\sfg} = \sum_{j=1}^m g(\widetilde{\sfc}_j) = \sum_{j=1}^Nb_j g(\widehat{\sfc}_j). \qedhere
\end{equation*}
\end{proof}

\begin{proof}[Proof of \prettyref{lmm:orthogonal}]
Note that $ N_{S} + N_{T} = N_{S \setminus T} + N_{T \setminus S}  + 2N_{S \cap T} $, where $ N_{S \setminus  T} $, $ N_{T \setminus  S} $, and $ N_{S \cap T} $ are independent binomially distributed random variables. By independence, we have
\begin{align*}
\Expect[f(N_{S})f(N_{ T})] & = \expect{\left(-\frac{q}{p}\right)^{N_{S}+N_{T}}} = \expect{\left(-\frac{q}{p}\right)^{N_{S \setminus T} + N_{T \setminus  S} + 2N_{S \cap T}}} \\
& = \expect{\left(-\frac{q}{p}\right)^{N_{S \setminus  T}}}\expect{\left(-\frac{q}{p}\right)^{N_{T \setminus  S}}}\expect{\left(-\frac{q}{p}\right)^{2N_{S \cap T}}}.
\end{align*}
Finally, note that if $ S \neq T $, then at least one of $ \Expect[(-\frac{q}{p})^{N_{S \setminus  T}}] $ or $ \Expect[(-\frac{q}{p})^{N_{T \setminus  S}}] $ is zero. If $ S =  T $, we have
\begin{align*}
\Expect[f(N_{S})^2] = \expect{\pth{-\frac{q}{p}}^{2N_{S}}} = \pth{\frac{q}{p}}^{|{S}|}.
\end{align*}
\end{proof}

\begin{proof}[Proof of \prettyref{lmm:independent}]
Let $ \sfc_j = |C_j| $. To prove \prettyref{eq:Cjnumber}, we will show that for any fixed $ j $,
\begin{equation*}
 | \{ i \in [N]: i \neq j,\; C_j = C_i \neq \emptyset \} | \leq \de-\sfc_j \leq \de-1.
\end{equation*}
By definition of the PEO, $ |N_G(v)| \geq \sfc_j $ for all $ v \in C_j $. For any $ i \in [N] $ such that $ C_j = C_i \neq \emptyset $, $ v_i \in N_G(v) $ for all $ v \in C_j $. Also, the fact that $ C_j = C_i \neq \emptyset $ makes it impossible for $ v_i \in C_j $. This shows that
\begin{equation*}
\sfc_j +  | \{ i \in [N]: i \neq j,\; C_j = C_i \neq \emptyset \} | \leq |N_G(v)| \leq d,
\end{equation*}
and hence the desired \prettyref{eq:Cjnumber}.

Next, we show \prettyref{eq:maxdegA}. Let $ \sfa_j = |A_j| $. We will prove that for any fixed $ j $,
\begin{equation} \label{eq:intersect}
| \{ i \in [N] : i \neq j, \; A_i \cap A_j \neq \emptyset \} | \leq \de\sfa_j - (\sfa_j-1)^2.
\end{equation}
This fact immediately implies \prettyref{eq:cross-terms} by noting that $ \sfa_j \leq \omega $.
To this end, note that
\begin{align*} 
| \{ i \in [N] : i \neq j, \; A_i \cap A_j \neq \emptyset \} | & =
| \{ i \in [N] : i \neq j, \; v_{i} \notin A_j, \; A_i \cap A_j \neq \emptyset \} |+ \\ & \qquad 
| \{ i \in [N] : i \neq j, \; v_{i} \in A_j \} |,
\end{align*}
where the second term is obviously at most $ \sfa_j-1$. 
Next we prove that the first term is at most $ (\de+1-\sfa_j)\sfa_j $, 
which, in view of $ (\de+1-\sfa_j)\sfa_j + (\sfa_j-1) = \de\sfa_j - (\sfa_j-1)^2 $, implies the desired \prettyref{eq:intersect}.
Suppose, for the sake of contradiction, that
\begin{equation*}
| \{ i \in [N] : i \neq j, \; v_{i} \notin A_j, \; A_i \cap A_j \neq \emptyset \} | \geq (\de+1-\sfa_j)\sfa_j+1
\end{equation*}
Then at least $ (\de+1-\sfa_j)\sfa_j+1 $ of the $ A_i $ have nonempty intersection with $ A_j $, meaning that at least $ (\de+1-\sfa_j)\sfa_j+1 $ vertices outside $ A_j $ are incident to vertices in $ A_j $. 
By the pigeonhole principle, there is at least one vertex $ u \in A_j $ which is incident to $\de+2-\sfa_j$ of those vertices outside $ A_j $.
Moreover, the vertices in $ A_j $ form a clique of size $ \sfa_j $ in $ G $ by definition of the PEO. This implies that $ |N_G(u)| \geq (\sfa_j-1) + (\de-\sfa_j+2) = \de+1 $, contradicting the maximum degree assumption and completing the proof.
\end{proof}

To prove the high-probability bound \prettyref{eq:chordal-concentration}, we used a concentration inequality for the sum of dependent random variables due to Janson \cite{Janson2004}. This result, stated next, can be distilled from \cite[Theorem 2.3]{Janson2004}. The two-sided version of the concentration inequality therein also holds; see the paragraph before \cite[Equation (2.3)]{Janson2004}.

\begin{lemma}
\label{lmm:janson}	
Let $ X = \sum_{j\in [N]} Y_j $, where $ |Y_j-\expect{Y_j}| \leq b $ almost surely. Let $ S = \sum_{j\in [N]} \Var[Y_j] $. 
Let $ \Gamma =([N],E(\Gamma))$ be a dependency graph for $ \{ Y_j \}_{j \in [N]} $ in the sense that if $ A \subset [N] $, and $ i \in [N]\backslash A $ does not belong to the neighborhood of any vertex in $ A $, then $ Y_i $ is independent of $ \{ Y_j \}_{j \in A} $. Furthermore, suppose $ \Gamma $ has maximum degree $ d_{\max} $. Then, for all $ t \geq 0 $,
\begin{equation*}
\prob{ | X - \expect{X} | \geq t } \leq 2\exp\left \{-\frac{8t^2}{25(d_{\max}+1)(S+bt/3)} \right\}.
\end{equation*}
\end{lemma}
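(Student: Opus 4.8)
The plan is to prove the inequality by a coloring-plus-Chernoff argument that reduces the dependent sum to independent sums inside color classes. First I would properly color the dependency graph $\Gamma$: since $\Gamma$ has maximum degree $d_{\max}$, a greedy coloring uses at most $\chi \leq d_{\max}+1$ colors, partitioning $[N]$ into independent sets $V_1,\dots,V_\chi$. The crucial structural point is that within each color class the variables are \emph{jointly} independent. Indeed, order $V_c$ as $j_1,\dots,j_k$; for each $\ell$ the vertex $j_\ell$ has no neighbor in $A=\{j_1,\dots,j_{\ell-1}\}\subseteq V_c\setminus\{j_\ell\}$, so the defining property of the dependency graph gives $Y_{j_\ell}$ independent of $\{Y_{j_m}\}_{m<\ell}$, and chaining these conditional statements yields joint independence of $\{Y_j\}_{j\in V_c}$. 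Writing $X_c=\sum_{j\in V_c}Y_j$ and $S_c=\sum_{j\in V_c}\Var[Y_j]$, I then have $X=\sum_{c=1}^{\chi}X_c$ and $S=\sum_{c=1}^{\chi}S_c$, with each $X_c$ a sum of independent variables obeying $|Y_j-\expect{Y_j}|\leq b$.

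Next I would control the centered moment generating function of $X$ by splitting across colors and applying H\"older's inequality with all exponents equal to $\chi$:
\[
\expect{e^{\lambda(X-\expect{X})}}=\expect{\prod_{c=1}^{\chi}e^{\lambda(X_c-\expect{X_c})}}\leq \prod_{c=1}^{\chi}\left(\expect{e^{\lambda\chi(X_c-\expect{X_c})}}\right)^{1/\chi}.
\]
Because each $X_c$ is an independent sum, the single-variable Bernstein estimate $\expect{e^{s(Y_j-\expect{Y_j})}}\leq \exp\!\big(\tfrac{s^2\Var[Y_j]/2}{1-|s|b/3}\big)$ for $|s|b<3$ tensorizes to $\expect{e^{\lambda\chi(X_c-\expect{X_c})}}\leq \exp\!\big(\tfrac{(\lambda\chi)^2 S_c/2}{1-\lambda\chi b/3}\big)$. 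Taking the $1/\chi$ power and multiplying over $c$ collapses the $S_c$ into $S$, giving $\expect{e^{\lambda(X-\expect{X})}}\leq \exp\!\big(\tfrac{\chi S\lambda^2/2}{1-\lambda\chi b/3}\big)$ for $0\leq\lambda<3/(\chi b)$.

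With this Bernstein-type MGF bound in hand, the final step is the Chernoff optimization: Markov's inequality gives $\prob{X-\expect{X}\geq t}\leq \exp\!\big(-\lambda t+\tfrac{\chi S\lambda^2/2}{1-\lambda\chi b/3}\big)$, and the standard choice of $\lambda$ yields the one-sided tail $\exp\!\big(-\tfrac{t^2}{2\chi(S+bt/3)}\big)$. Substituting $\chi\leq d_{\max}+1$ and using $\tfrac12\geq\tfrac{8}{25}$ produces the stated denominator $(d_{\max}+1)(S+bt/3)$ with a constant at least as good as $8/25$ (so Janson's $8/25$ is a safe, non-tight choice). The two-sided bound, and hence the prefactor $2$, follows by running the identical argument for $-X$, whose increments satisfy the same bound and whose dependency graph is unchanged, followed by a union bound. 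I expect the main obstacle to be the independence step --- verifying that the stated dependency-graph condition truly delivers joint (not merely pairwise) independence within each color class, which legitimizes the tensorization of the Bernstein bound --- together with careful bookkeeping of the constant through the H\"older and Bernstein steps.
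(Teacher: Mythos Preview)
Your proposal is correct and is essentially Janson's own chromatic-number-plus-H\"older argument; the paper does not prove this lemma at all but simply cites \cite[Theorem 2.3]{Janson2004} and the remark before \cite[Equation (2.3)]{Janson2004} for the two-sided version. Your integer-coloring route in fact yields the exponent constant $\tfrac12$ in place of $\tfrac{8}{25}$, so the stated bound follows a fortiori.
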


\begin{proof}[Proof of \prettyref{thm:chordal_smooth}]
Let $ \{v_1, \dots, v_N\} $ be a PEO of the parent graph $ G $ and let $ \{\widetilde{v}_{1},\dots,\widetilde{v}_{m}\} $, $ m = |S| $, be a PEO of $ \tG $ and $ \widetilde{\sfc}_j = |N_{\tG}(\widetilde{v}_{j}) \cap \{\widetilde{v}_{1},\dots,\widetilde{v}_{j-1}\}| $. 
Let $ \widehat{\sfc}_j = |N_{\tG}(\vertex_{j}) \cap \{\vertex_{1},\dots,v_{j-1}\}| $ and $ \sfc_j  =|N_G(\vertex_{j}) \cap \{\vertex_{1},\dots,v_{j-1}\}| $. By \prettyref{lmm:permutation}, we can rewrite $ \cchat_L $ as
\begin{align*}
\cchat_L
& = \frac{1}{p}\sum_{j\geq 1}b_{j}\left(-\frac{q}{p}\right)^{\widehat{\sfc}_j}\prob{L \geq \widehat{\sfc}_j}, 
\end{align*}
where $ \widehat{\sfc}_j  \sim \Binom(\sfc_j, p) $ conditioned on $\{ b_{j} = 1 \}$.

We compute the bias and variance of $ \cchat_L $ and then optimize over $ \lambda $.
First, 
\begin{align*}
\Expect[\cc(G)-\cchat_L] & = \frac{1}{p}\sum_{j=1}^{N}\Expect[b_{j}\left(-\frac{q}{p}\right)^{\widehat{\sfc}_j}\prob{L < \widehat{\sfc}_j}] = \sum_{j=1}^{N}\sum_{i=0}^{\sfc_j}\binom{\sfc_j}{i}p^iq^{\sfc_j-i}\left(-\frac{q}{p}\right)^{i}\prob{L < i} \\
& = \sum_{j=1}^{N}q^{\sfc_j}\sum_{i=0}^{\sfc_j}\binom{\sfc_j}{i}(-1)^{i}\prob{L < i} = \sum_{j=1}^{N}q^{\sfc_j}\sum_{i=0}^{\sfc_j}\binom{\sfc_j}{i}(-1)^{i}\sum_{\ell=0}^{i-1}\prob{L=\ell} \\
& = \sum_{j=1}^{N}q^{\sfc_j}\sum_{\ell=0}^{\sfc_j-1}\prob{L=\ell}\sum_{i=\ell+1}^{\sfc_j}\binom{\sfc_j}{i}(-1)^{i}\\
& \stepa{=} \sum_{j=1}^{N}q^{\sfc_j}   \Expect_L\qth{ \binom{\sfc_j-1}{L}(-1)^{L+1}} \\
& \stepb{=} -e^{-\lambda}\sum_{j=1}^Nq^{\sfc_j}L_{\sfc_j-1}(\lambda),
\end{align*}
where (a) follows from the fact that $\sum_{i=\ell+1}^{k}\binom{k}{i}(-1)^{i}= \binom{k-1}{\ell}(-1)^{\ell+1}$, and 
(b) follows from 
\begin{equation} \label{eq:laguerre}
\Expect_L\qth{ \binom{k-1}{L}(-1)^{L+1}} = e^{-\lambda} L_{k-1}(\lambda),
\end{equation}
where $ L_{m} $ is the Laguerre polynomial of degree $ m $, which satisfies $ |L_m(x)| \leq e^{x/2} $ for all $ m \geq 0 $ and $ x \geq 0 $ \cite{Stegun1964}. Thus
\begin{equation} \label{eq:bias}
|\Expect[\cchat_L-\cchat]| \leq Ne^{-\lambda/2}.
\end{equation}

To bound the variance, write 
$ \cchat_L = \frac{1}{p}\sum_{j=1}^{N}W_j $, where $ W_j = b_{j}(-\frac{q}{p})^{\widehat{\sfc}_j}\prob{L \geq \widehat{\sfc}_j} $.
Thus
\begin{equation}
\Var[\cchat_L] = \frac{1}{p^2} \sum_{j\in [N]} \Var[W_j] + \frac{1}{p^2} \sum_{i \neq j} \Cov[W_i,W_j]
\label{eq:varccL}
\end{equation}
Note that $W_j$ is a function of $\{b_{\ell}: v_{\ell}\in A_j, \ell\in[N]\}$, where $A_j$ is defined in \prettyref{eq:Aj}.
Using \prettyref{lmm:independent}, we have 
\begin{equation} 
\label{eq:cross-terms}
|\{ (i,j) \in [N]^2 : i \neq j, \; A_i \cap A_j \neq \emptyset \}| \leq N\de \omega.
\end{equation}
Thus the number of cross terms in \prettyref{eq:varccL} is at most $N\de \omega$ thanks to \prettyref{eq:cross-terms}.
Thus,
\begin{equation}
\Var[\cchat_L] \leq \frac{N(1+\de\omega)}{p^2}\max_{1 \leq j\leq N}\Var[W_j].
\label{eq:varccL2}
\end{equation}
Finally, note that if $ p < 1/2 $, then
\begin{equation}
\Var[W_j] \leq p\left( \sup_{k\geq 0} \left\{\left( \frac{q}{p} \right)^k \prob{L \geq k} \right\} \right)^2 \leq p\pth{\Expect_L \qth{\pth{ \frac{q}{p}}^{L}}}^2 = p\exp \left\{2\lambda\left(\frac{q}{p}-1\right)\right \}.
\label{eq:varccL3}
\end{equation}
Combining \prettyref{eq:bias}, \prettyref{eq:varccL2}, and \prettyref{eq:varccL3}, we have
\[
\Expect_G|\cchat_L-\cc(G)|^2
\leq N^2 e^{-\lambda} + 
\frac{N(1+\de\omega)}{p} \exp \left\{2\lambda\left(\frac{q}{p}-1\right) \right\}.
\]
The choice of $ \lambda $ yields the desired bound.
\end{proof}

\begin{proof}[Proof of \prettyref{thm:cc-var-clique}]
The estimator \prettyref{eq:chordal-estimator} can also be written as $ \cchat = \sum_{k=1}^{\cc(G)}[1-(-\frac{q}{p})^{\widetilde{N}_k}] $, where 
$\widetilde{N}_k$ is the number of sampled vertices from the $k\Th$ component. 
Then $ \widetilde{N}_k \sdistr \Binom(N_k, p) $. Thus,
\begin{equation*}
\Var[\cchat]= \sum_{k=1}^{\cc(G)}\pth{\frac{q}{p}}^{N_k} = \sum_{r=1}^{N} \pth{\frac{q}{p}}^r \cc_r.
\end{equation*}
The upper bound follows from the fact that $ \cc_r = 0 $ for all $ r > \omega $ and $ \sum_{r=1}^N\cc_r = \cc(G) \leq N $.
\end{proof}

\begin{proof}[Proof of \prettyref{thm:clique_smooth}]
The bias of this estimator is seen to be
\begin{equation*}
\expect{\cc(G) -\cctilde_L} = \sum_{k=1}^{\cc(G)}\expect{\prob{L < \widetilde{N}_k}\left(-\frac{q}{p}\right)^{\widetilde{N}_k}}.
\end{equation*}
Note that
\begin{align*}
\expect{\prob{L < \widetilde{N}_k}\left(-\frac{q}{p}\right)^{\widetilde{N}_k}} & = \sum_{r=1}^N\prob{L < r}\pth{-\frac{q}{p}}^r \prob{ \widetilde{N}_k = r} \\
& = \sum_{i=0}^{N-1}\prob{L=i}\sum_{r=i+1}^N\pth{-\frac{q}{p}}^r\prob{\widetilde{N}_k = r}.
\end{align*}
Since $ \widetilde{N}_k \sim \text{Bin}(N_k,p) $, it follows that 
\begin{equation*}
\sum_{r=i+1}^N\pth{-\frac{q}{p}}^r\prob{\widetilde{N}_k = r} = q^{N_k}\sum_{r=i+1}^N\binom{N_k}{r}(-1)^r = q^{N_k}(-1)^{i+1}\binom{N_k-1}{i}.
\end{equation*}
Putting these facts together, we have
\begin{equation*}
\expect{\cc(G) -\cctilde_L} = -\sum_{k=1}^{\cc(G)}q^{N_k}P_{N_k-1}(\lambda) = \sum_{k=1}^{\cc(G)}q^{N_k}\mathbb{E}_L\left[\binom{N_k-1}{L}(-1)^{L+1}\right],
\end{equation*}
Analogous to \prettyref{eq:laguerre}, we have $ \left|\mathbb{E}_L\left[\binom{N_k-1}{L}(-1)^{L+1}\right]\right| \leq e^{-\lambda/2} $, and hence by the Cauchy-Schwarz inequality,
\begin{equation}
|\expect{\cc(G) -\cctilde_L}| \leq e^{-\lambda/2}\sqrt{N\sum_{k=1}^{\cc(G)}q^{N_k}}.
\label{eq:clique1}
\end{equation}

For the variance of $ \cctilde_L $, note that $ \cctilde_L = \sum_{k=1}^{\cc(G)} W_k $, where $ W_k \triangleq 1-\prob{L\geq  \widetilde{N}_k}\left(-\frac{q}{p}\right)^{ \widetilde{N}_k} $. The $ W_k $ are independent random variables and hence
\begin{equation*}
\Var[\cctilde_L] = \sum_{k=1}^{\cc(G)} \Var[W_k] \leq \sum_{k=1}^{\cc(G)} \mathbb{E}W^2_k.
\end{equation*}
Also,
\begin{equation*}
W^2_k \leq \max_{1 \leq r \leq N}\left\{ 1-\prob{L \geq r}\pth{-\frac{q}{p}}^r \right\}^2\Indc\{ \widetilde{N}_k \geq 1\}.
\end{equation*}
This means that
\begin{equation*}
\Var[\cctilde_L] \leq \max_{1 \leq r \leq N}\left\{ 1-\prob{L \geq r}\pth{-\frac{q}{p}}^r \right\}^2\sum_{k=1}^{\cc(G)}(1-q^{N_k}).
\end{equation*}
Since $ p < 1/2 $, we have
\begin{align*}
\prob{L \geq r}\left(\frac{q}{p}\right)^r & = \sum_{i= r}^{\infty}\prob{L=i}\left(\frac{q}{p}\right)^r \leq \sum_{i= r}^{\infty}\prob{L=i}\left(\frac{q}{p}\right)^i \\ & \leq \sum_{i=0}^{\infty}\prob{L=i}\left(\frac{q}{p}\right)^i = \Expect_L\left(\frac{q}{p}\right)^L = e^{\lambda(\frac{q}{p}-1)}.
\end{align*}
Thus, it follows that 
\begin{equation}
\Var[\cctilde_L] \leq 4e^{2\lambda(\frac{q}{p}-1)}\sum_{k=1}^{\cc(G)}(1-q^{N_k}). 
\label{eq:clique2}
\end{equation}
Combining \prettyref{eq:clique1} and \prettyref{eq:clique2} yields
\begin{equation*}
\mathbb{E}|\cctilde_L - \cc(G)|^2 \leq 4e^{2\lambda(\frac{q}{p}-1)}\sum_{k=1}^{\cc(G)}(1-q^{N_k}) + N e^{-\lambda}\sum_{k=1}^{\cc(G)} q^{N_k}
\leq \cc(G) \max\sth{4e^{2\lambda(\frac{q}{p}-1)}, N e^{-\lambda}}.
\end{equation*}
Choosing $ \lambda = \frac{p}{2-3p}\log(N/4) $ leads to $ 4e^{2\lambda(\frac{q}{p}-1)} = N e^{-\lambda} $ and completes the proof.
\end{proof}

\begin{proof}[Proof of \prettyref{thm:mainlb}]
Fix $\alpha \in (0,1)$.
Let $ M = N/m $ and $ G = G_1+ G_2 + \cdots + G_M $, where 
$ G_i \simeq H$ or $H'$ with probability $\alpha$ and $1-\alpha$, respectively.
	Let $\Prob_\alpha$ denote the law of $G$ and $\Expect_\alpha$ the corresponding expectation.	
Assume without loss of generality that $ f(H) > f(H') $. Note that $ \mathbb{E}_{\alpha}f(G) = M[\alpha f(H) + (1-\alpha)f(H')] $. 

Let $ \tG_i $ be the sample version of $ G_i $. Then $\tG=\tG_1+\dots+\tG_M$.
For each subgraph $ h $, by \prettyref{eq:pmf-bern}, we have
\begin{equation*}
\prob{\tG_i \simeq h \mid G_i \simeq H} = \s(h, H)p^{\sfv(h)}(1-p)^{m-\sfv(h)},
\end{equation*}
and
\begin{equation*}
\prob{\tG_i \simeq h \mid G_i \simeq H'} = \s(h, H')p^{\sfv(h)}(1-p)^{m-\sfv(h)}.
\end{equation*}
Let $ P \triangleq P_{\tH} = \mathcal{L}(\tG_i \mid G_i \simeq H) $ and $ P' \triangleq P_{\tH'} = \mathcal{L}(\tG_i \mid G_i \simeq H') $. Then the law of each $\tG_i$ is simply a mixture $ P_{\alpha} \triangleq \mathcal{L}(\tG_i) = \alpha P + (1-\alpha)P' $.
Furthermore, $ (\tG_1,\tG_2,\dots,\tG_M)' \sim P^{\otimes M}_{\alpha} $. 

To lower bound the minimax risk of estimating the functional $ f(G)$, we apply the method of two fuzzy hypotheses \cite[Theorem 2.15(i)]{Tsybakov2009}.
To this end, consider a pair of priors, that is, the distribution of $G$ with 
$\alpha=\alpha_0=1/2$ and $\alpha_1=1/2+\delta$, respectively, where $ \delta \in [0, 1/2] $ is to be determined. 
To ensure that the values of $f(G)$ are separated under the two priors,
note that $ f(G) \eqdistr (f(H)-f(H'))\Binom(M, \alpha) + f(H')M $. 
Define $ L = f(H)(1/2+\delta/4)M + f(H')(1/2-\delta/4)M $ and 
\[
\Delta \triangleq \frac{1}{4} (\mathbb{E}_{\alpha_1}f(G)-\mathbb{E}_{\alpha_0}f(G)) = \frac{M\delta}{4} (f(H)-f(H')) .
\] 
By Hoeffding's inequality, for any $ \delta \geq 0 $,
\begin{equation*}
\Prob_{\alpha_0}[f(G) \leq L] = \prob{\Binom(M,\alpha_0) \leq M\alpha_0 + M\delta/4} \geq 1-e^{-\delta^2M/8} \triangleq 1 -\beta_0.
\end{equation*}
and
\begin{equation*}
\Prob_{\alpha_1}[f(G) \geq L+2\Delta] = \prob{\Binom(M,\alpha_1) \geq M\alpha_1 - M\delta/4} \geq 1-e^{-\delta^2M/8} \triangleq 1 -\beta_1.
\end{equation*}
Invoking \cite[Theorem 2.15(i)]{Tsybakov2009}, we have
\begin{equation}
\inf_{\widehat{f}}\sup_{G\in\calG}\prob{|\widehat{f}\big(\tG\big)-f(G)| \geq \Delta } \geq \frac{1-\TV(P^{\otimes M}_{\alpha_0}, P^{\otimes M}_{\alpha_1})-\beta_0-\beta_1}{2}.
\label{eq:fuzzy}
\end{equation}
The total variation term can be bounded as follows:
\begin{align*}
\TV(P^{\otimes M}_{\alpha_0}, P^{\otimes M}_{\alpha_1})
& \stepa{\leq} 1-\frac{1}{2}\exp\{-\chi^2(P^{\otimes M}_{\alpha_0} \| P^{\otimes M}_{\alpha_1})\} \\
& = 1 - \frac{1}{2}\exp\{-(1+\chi^2(P_{\alpha_0} \| P_{\alpha_1}))^M+1\} \\
& \stepb{\leq} 1 - \frac{1}{2}\exp\{-(1+4\delta^2\TV(P, P'))^M+1\},
\end{align*}
where 
(a) follows from the inequality between the total variation and the $\chi^2$-divergence $\chi^2(P\|Q) \triangleq \int (\frac{dP}{dQ}-1)^2 dQ$ \cite[Eqn.~(2.25)]{Tsybakov2009};
(b) follows from 
\begin{align*}
\chi^2(P_{\alpha_0} \| P_{\alpha_1})
& = \chi^2\left(\frac{P+P'}{2} + \delta(P-P') \Big\| \frac{P+P'}{2}\right) \\
& = \delta^2\int \frac{(P-P')^2}{\frac{P+P'}{2}} \leq 4\delta^2\TV(P, P').
\end{align*}

Choosing $ \delta = \frac{1}{2} \wedge \sqrt{\frac{1}{4M\TV(P_{\tH}, P_{\tH'})}} $ and in view of the assumptions that $ M \geq 300 $ and $ \TV(P, P') \leq 1/300 $, 
the right-hand size of \prettyref{eq:fuzzy} is at least
\begin{equation*}
\frac{1}{4} \exp\{-(1+4\delta^2\TV(P, P'))^M+1\} - e^{-\delta^2M/8} \geq 0.01,
\end{equation*}
which proves \prettyref{eq:mainlb}.
\end{proof}

\begin{proof}[Proof of \prettyref{lmm:kocay}]
We use Kocay's Vertex Theorem \cite{Kocay1982} which says that if $ \calH $ is a collection of graphs, then
\begin{equation*}
\prod_{h\in \calH} \s(h, G) = \sum_{g}a_g \s(g,G),
\end{equation*}
where the sum runs over all graphs $ g $ such that $ \vertex(g) \leq \sum_{h\in \calH}\sfv(h) $ and $ a_g $ is the number of decompositions of $ V(g) $ into $ \cup_{h\in\calH}V(h) $ such that $ g[V(h)] \simeq h $. 

In particular, if $ \calH $ consists of the connected components of $ H $, then the only disconnected $ g $ with $ \vertex(g) = v $ satisfying the above decomposition property is $ g \simeq H $. Hence
\begin{equation*}
\s(H, G) = \frac{1}{a_H}\left[\prod_{h\in \calH} \s(h, G) - \sum_{g}a_g \s(g,G)\right],
\end{equation*}
where the sum runs over all $ g $ that are either connected and $ \vertex(g) \leq v $ or disconnected and $ \vertex(g) \leq v-1 $. This shows that $ \s(H, G) $ can be expressed as a polynomial, independent of $ G $, in $ \s(g, G) $ where either $ g $ is connected and $ \vertex(g) \leq v $ or $ g $ is disconnected and $ \vertex(g) \leq v-1 $. 

The proof proceeds by induction on $v$. The base case of $v=1$ is clearly true. 
Suppose that for any disconnected graph $ h $ with at most $ v $ vertices, $ \s(h,G) $ can be expressed as a polynomial, independent of $ G $, in $ \s(g,G) $ where $ g $ is connected and $ \vertex(g) \leq v $. By the first part of the proof, if $ H $ is a disconnected graph with $ v+1 $ vertices, then $ \s(H, G) $ can be expressed as a polynomial, independent of $ G $, in $ \s(h, G) $ where either $ h $ is connected and $ \sfv(h) \leq v+1 $ or $ h $ is disconnected and $ \sfv(h) \leq v $. By $ S(v) $, each $ \s(h, G) $ with $ h $ disconnected and $ \sfv(h) \leq v $ can be expressed as a polynomial, independent of $ G $, in $ \s(g, G) $ where $ g $ is connected and $ \vertex(g) \leq v $. Thus, we can express $ \s(H,G) $ as a polynomial, independent of $ G $, in terms of $ \s(g,G) $ where $ g $ is connected and $ \vertex(g) \leq v+1 $.
\end{proof}

\begin{proof}[Proof of \prettyref{lmm:matching}]
By \prettyref{cor:counts}, we have
\begin{equation}
\s(h, H) = \s(h, H'),
\label{eq:matchingind}
\end{equation}
for all $ h $ (not necessarily connected) with $ \sfv(h) \leq k $. 
Note that conditioned on $\ell$ vertices are sampled, 
$\tH$ is uniformly distributed over the collection of all induced subgraphs of $H$ with $\ell$ vertices. Thus
\[
\prob{\tH \simeq h \mid \sfv(\tH)=\ell}  = \frac{\s(h, H) }{\binom{m}{\ell}}.
\]
In view of \prettyref{eq:matchingind}, we conclude that the isomorphism class of $\tH$ and $\tH'$ have the same distribution provided that no more than $k$ vertices are sampled. Hence the first inequality in \prettyref{eq:tvmatch} follows, while the last inequality therein follows from the union bound $\prob{\Binom(m,p) \geq \ell} \leq \binom{m}{\ell} p^{\ell}$. The bound \prettyref{eq:exp-ineq} follows directly from Hoeffding's inequality on the binomial tail probability in \prettyref{eq:tvmatch}.
%
%
\end{proof}

\section{Additional results}\label{app:results}

In this appendix, we provide results for the uniform sampling model in \prettyref{sec:uniform}. We also discuss additional lower bound conclusions for graphs with long cycles in \prettyref{sec:cycle} and forests in \prettyref{sec:forestlb}.

\subsection{Extensions to uniform sampling model} \label{sec:uniform}

As we mentioned in \prettyref{sec:subgraph}, the uniform sampling model, where $ n $ vertices are selected uniformly at random from $ G $, is similar to Bernoulli sampling with $ p = n/N $. For this model, the unbiased estimator analogous to \prettyref{eq:chordal-estimator} is
\begin{equation} \label{eq:cc-estimator-without}
\cchat_U = \sum_{i\geq 1} \frac{\pth{-1}^{i+1}}{p_i} \s(K_i, \widetilde{G}),
\end{equation}
where $ p_i \triangleq \frac{\binom{N-i}{n-i}}{\binom{N}{n}} $.
Next we show that this unbiased estimator enjoys the same variance bound in \prettyref{thm:chordal} up to constant factors that only depend on $ \omega $. The proof of this result if given in \prettyref{app:proofs}.

\begin{theorem} \label{thm:var-uniform}
Let $ \tG $ be generated from the uniform sampling model with $ n = pN $. Then
\begin{equation*}
\Var [\cchat_U ] = O_{\omega}\left(\frac{N}{p^{\omega}} + \frac{N\de}{p^{\omega-1}}\right).
\end{equation*}
\end{theorem}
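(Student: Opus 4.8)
The plan is to replay the proof of \prettyref{thm:chordal}, replacing the independence of the vertex indicators by the (negatively associated) structure of sampling without replacement. Fix a PEO $\{v_1,\dots,v_N\}$ of $G$; since the restriction of a PEO to a subset is a PEO of the induced subgraph and induced subgraphs of chordal graphs are chordal, \prettyref{eq:peo-complete-count} applied to $\tG$ gives $\s(K_i,\tG)=\sum_{j=1}^N b_j\binom{\widehat\sfc_j}{i-1}$, where $b_j=\indc{\vertex_j\in S}$ and $\widehat\sfc_j=b_j\sum_{k<j}b_k\indc{\vertex_k\sim\vertex_j}$ is the same sample-level quantity used in the proof of \prettyref{thm:chordal}. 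Substituting into \prettyref{eq:cc-estimator-without} yields the PEO representation $\cchat_U=\sum_{j=1}^N b_j\,\psi(\widehat\sfc_j)$ with $\psi(c)=\sum_{l\ge0}\frac{(-1)^l}{p_{l+1}}\binom cl$, the exact analogue of \prettyref{eq:chordal-estimator-peo1} with the Bernoulli weights $p^{l+1}$ replaced by $p_{l+1}$; equivalently $b_j\psi(\widehat\sfc_j)=\sum_{T\subseteq C_j}\frac{(-1)^{|T|}}{p_{|T|+1}}\indc{\{\vertex_j\}\cup T\subseteq S}$, which is the convenient form for moments. I would also record two elementary facts: (i) $p_i=\prod_{j=0}^{i-1}\frac{n-j}{N-j}$ obeys $c_\omega p^i\le p_i\le p^i$ for $i\le\omega$ once $n\ge 2\omega$, so $|\psi(c)|=O_\omega(p^{-(c+1)})$ for $c\le\omega-1$ and $\widehat\sfc_j\mid\{b_j=1\}\sim\hyper(N-1,n-1,\sfc_j)$ takes values in $\{0,\dots,\sfc_j\}$ with $\sfc_j\le\omega-1$ (the regime $n=O_\omega(1)$ is handled separately by crude estimates, since then $\tG$ has $O_\omega(1)$ vertices, $\s(K_i,\tG)=0$ for $i>n$, and the target bound is polynomially large in $N$); and (ii) a direct computation gives $p_{a+b-s}/(p_ap_b)=p^{-s}(1+O_\omega(1/n))$ for all $a,b,s\le\omega$, quantifying how far without-replacement sampling departs from independence at the scale of $\omega$-cliques (this specializes at $s=0$ to $p_{a+b}\le p_ap_b$).

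Next comes the variance computation $\Var[\cchat_U]=\sum_j\Var[b_j\psi(\widehat\sfc_j)]+\sum_{j\ne j'}\Cov[b_j\psi(\widehat\sfc_j),b_{j'}\psi(\widehat\sfc_{j'})]$. Expanding each term via the $\indc{\{\vertex_j\}\cup T\subseteq S}$-representation reduces everything to sums of $p_{|\{\vertex_j,\vertex_{j'}\}\cup T\cup T'|}$ over $T\subseteq C_j$, $T'\subseteq C_{j'}$. The crux is the hypergeometric analogue of \prettyref{lmm:orthogonal}: there is an orthogonal family for the without-replacement measure (the Hahn polynomials) under which these cross-sums collapse up to the $O_\omega(1/n)$ discrepancy of fact (ii). I would then split the pairs as in the proof of \prettyref{thm:chordal} according to whether $A_j\triangleq\{\vertex_j\}\cup C_j$ and $A_{j'}$ are disjoint. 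When they are, the covariance equals its (vanishing) Bernoulli value plus an error $O_\omega(1/n)$; summing over the at most $N^2$ such pairs gives $O_\omega(N^2/n)=O_\omega(N/p)\le O_\omega(N\de/p^{\omega-1})$ since $\omega\ge 2$. When $A_j\cap A_{j'}\ne\emptyset$, \prettyref{lmm:independent} caps the number of such ordered pairs at $N\de\omega$, and the same orthogonality computation — now retaining the surviving terms, exactly as in Cases I and II of the proof of \prettyref{thm:chordal} — bounds each such covariance by $O_\omega(p^{-(\omega-1)})$ in magnitude, contributing $O_\omega(N\de/p^{\omega-1})$; the $N$ diagonal terms are each $O_\omega(p^{-\omega})$ by the same computation with $j=j'$, contributing $O_\omega(N/p^{\omega})$. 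Adding the three pieces yields the bound.

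The main obstacle is precisely the step replacing \prettyref{lmm:orthogonal}: in the Bernoulli analysis exact orthogonality made almost every covariance vanish identically, whereas under sampling without replacement \emph{every} pair of vertex indicators is correlated, so every pair of cliques contributes. Thus the work is twofold — establish the hypergeometric orthogonality relation cleanly, so that the ``far'' pairs collectively contribute only the lower-order $O_\omega(N/p)$ term (absorbed into $N\de/p^{\omega-1}$), and verify that the ``near'' pairs, whose count is still governed by the chordal combinatorics of \prettyref{lmm:independent}, are each controlled by the same $\omega$-dependent powers of $1/p$ as in the Bernoulli proof. Note that a crude bound on a single near-pair covariance via $|\psi|^2$ would give $O_\omega(p^{-2(\omega-1)})$, which is insufficient, so the cancellation really is essential.

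An alternative that sidesteps orthogonal polynomials is to expand $\Var[\cchat_U]=\sum_{Q,Q'}\frac{(-1)^{|Q|+|Q'|}p_{|Q\cup Q'|}}{p_{|Q|}p_{|Q'|}}-\cc(G)^2$ as a double sum over pairs of cliques of $G$ (grouped by $|Q\cap Q'|$), use fact (ii) to peel off the disjoint-pair part as the Bernoulli value plus an $O_\omega(N/p)$ error, and bound the remaining intersecting-pair sum directly, using that a vertex of a chordal graph with parameters $(\de,\omega)$ lies in only $O_\omega(\de)$ cliques; this is more elementary but combinatorially heavier and still requires tracking the sign cancellations against the Bernoulli baseline, so I would pursue the orthogonality route first.
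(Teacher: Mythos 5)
Your strategy is sound but genuinely different from --- and substantially harder than --- what the paper actually does. The paper's proof of \prettyref{thm:var-uniform} never touches the PEO representation or any orthogonality structure: it first pays a factor of $\omega$ via the crude bound $\Var[\cchat_U]\leq \omega\sum_{i=1}^{\omega}p_i^{-2}\Var[\s(K_i,\tG)]$, and then, for each fixed clique size $i$, expands $\Var[\s(K_i,\tG)]$ over pairs of $i$-cliques grouped by their overlap $k$. Two devices make this painless: (i) negative association of sampling without replacement, which gives $p_{i,0}=p_{2i}-p_i^2\leq 0$ so that all $\Theta(N^2)$ disjoint pairs are simply discarded with no cancellation analysis whatsoever; and (ii) the elementary estimate $p_{i,k}/p_i^2\leq i!/p^k$ combined with the chordal counting facts $\s(K_i,G)=O_\omega(N)$ and $\n(T_{i,k},G)=O_\omega(N\de)$ for $k\geq 1$. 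Because the split by clique size is done first, every surviving term has a positive coefficient and no sign cancellation is ever needed. By contrast, your route keeps the full alternating sum and therefore must actually establish the hypergeometric analogue of \prettyref{lmm:orthogonal}; you correctly identify this as the crux but do not carry it out, and that is the one real gap in the write-up. It is closable: your expansion $b_j\psi(\widehat{\sfc}_j)=\sum_{T\subseteq C_j}\frac{(-1)^{|T|}}{p_{|T|+1}}\indc{\{\vertex_j\}\cup T\subseteq S}$ together with your fact (ii) reduces every covariance to its Bernoulli value plus an error of order $p^{-s}/n$ per pair of terms, where $s$ is the overlap of the two cliques involved; one then checks that $s\leq\omega-1$ whenever $j\neq j'$, so the accumulated error over the $O(N\de\omega)$ near pairs is $O_\omega(\de p^{-\omega})\leq O_\omega(N/p^{\omega})$, while the $N^2$ far pairs contribute $O_\omega(N^2/n)=O_\omega(N/p)$. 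This bookkeeping must be made explicit, because the naive worst case $s=\omega$ would produce an error term of order $\de p^{-\omega-1}$, which is \emph{not} dominated by the target bound in the regime $n\ll \de\ll N/n$. Finally, note that your ``alternative'' sketch --- a double sum over pairs of cliques grouped by intersection size --- is much closer to the paper's argument and is the easier path; the paper's additional trick of invoking negative association for the disjoint pairs removes any need to track cancellations against a Bernoulli baseline.
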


\begin{proof}
Using $(a_1+\cdots + a_k)^2 \leq k (a_1^2 + \cdots + a_k^2)$, we have
	\begin{equation} \label{eq:var-cc-unif}
	\Var[\cchat_U]
	\leq \omega \cdot \sum_{i=1}^{\omega} \frac{\Var[\s(K_i, \tG)]}{p^{2}_i}.
	\end{equation}
Next, each variance term can be bounded as follows. 
Let $b_v = \Indc\{v \in S\} \sim \Bern(p)$.
Note that
	\begin{align}
	\Var[\s(K_i, \tG)] & = \Var\left[  \sum_{T: \; G[T] \simeq K_i} \prod_{v\in T} b_v \right] \nonumber \\
	 & =  \sum_{T: \; G[T] \simeq K_i} \Var\left[\prod_{v\in T} b_v\right] + \sum_{k=0}^{i-1}\sum_{ \substack{T \neq T' : \; |T\cap T'| = k, \\ \; G[T] \simeq K_i, \; G[T'] \simeq K_i}}\Cov\left[ \prod_{v\in T} b_v, \prod_{v'\in T'} b_{v'} \right] \nonumber \\
	& = \s(K_i, G)p_{i,i} + 2\sum_{k=0}^{i-1}\n(T_{i,k},G) p_{i, k},
	\label{eq:var-uniform}
	\end{align}
	where 
\begin{equation*}
p_{i,k} \triangleq p_{2i-k} - p_i^2 =
\frac{\binom{N-2i+k}{n-2i+k}}{\binom{N}{n}} - \left(\frac{\binom{N-i}{n-i}}{\binom{N}{n}}\right)^2, \quad 0\leq k \leq i \leq n,
\end{equation*}
$T_{i,k}$ denotes two $K_i$'s sharing $k$ vertices, and we recall that $\n(H,G)$ notes the number of embeddings of (edge-induced subgraphs isomorphic to) $H$ in $G$.
It is readily seen that $ \frac{p_{i,k}}{p^2_i} \leq \frac{i!}{p^k} $ since
\begin{equation*} \label{eq:pik-bound}
\frac{p_{i,k}}{p^2_i} \leq 
\frac{p_{2i-k}}{p_i^2}
= \frac{\binom{N-2i+k}{n-2i+k}}{\binom{N-i}{n-i}} \frac{\binom{N}{n}}{\binom{N-i}{n-i}}
= \frac{\prod_{j=i+1}^{2i-k} \frac{n-j+1}{N-j+1}}{\prod_{j=1}^{i} \frac{n-j+1}{N-j+1}} \leq \frac{\prod_{j=i+1}^{2i-k}\frac{n}{N}}{\prod_{j=1}^{i}\frac{n}{jN}} = \frac{i!}{p^k},
\end{equation*}
where we used $p=n/N$ and the inequalities $ \frac{n}{jN} \leq \frac{n-j+1}{N-j+1} \leq \frac{n}{N} $ for $ 1 \leq j \leq (1+\frac{1}{N})n $. Furthermore, from the same steps,
for $k=0$ we have 
\[
\frac{p_{2i}}{p_i^2} = \prod_{j=1}^{i} \frac{ \frac{n-j+1-i}{N-j+1-i}}{\frac{n-j+1}{N-j+1}} \leq 1,
\]
or equivalently, $ p_{i,0} \leq 0 $, which also follows from negative association.

Substituting $ p_{i,0} \leq 0 $ and $ \frac{p_{i,k}}{p^2_i} \leq \frac{i!}{p^k} $ into \prettyref{eq:var-uniform}
yields
\begin{align}
\frac{1}{p^2_i}\Var[\s(K_i, \tG)] & = \frac{\s(K_i, G)p_{i,i}}{p^2_i} + 2\sum_{k=0}^{i-1}\n(T_{i,k},G) \frac{p_{i, k}}{p^2_i} \nonumber \\
& \leq \frac{\s(K_i, G)p_{i,i}}{p^2_i} + 2\sum_{k=1}^{i-1}\n(T_{i,k},G) \frac{p_{i, k}}{p^2_i} \nonumber \\
& \leq i!\left(\frac{\s(K_i, G)}{p^i} + 2\sum_{k=1}^{i-1}\frac{\n(T_{i,k},G)}{p^k}\right). \label{eq:var-complete-unif}
\end{align}


To finish the proof, we establish two combinatorial facts:
\begin{align}
\s(K_i, G)  = & ~ O_{\omega}(N), \quad i = 1, 2, \dots, \omega  \label{eq:a}\\
\n(T_{i, k},G) = & ~ O_{\omega}(Nd), \quad k = 1, 2, \dots, i-1 \label{eq:b}
\end{align}
Here \prettyref{eq:a} follows from the fact that for any chordal graph $G$ with clique number bounded by $\omega$, the number of cliques of any size is at most $O_\omega(|\sfv(G)|) = O_{\omega}(N) $. This can be seen from the PEO representation in \prettyref{eq:peo-complete-count} since $ \sfc_j \leq \omega-1 $.
To show \prettyref{eq:b}, note that to enumerate $T_{i,k}$, we can first enumerate cliques of size $i$, then for each clique, choose $i-k$ other vertices in the neighborhood of $k$ vertices of the clique. Note that for each $v\in V(G)$, the neighborhood of $v$ is also a chordal graph of at most $d$ vertices and clique number at most $\omega$. Therefore, by \prettyref{eq:a}, the number of $K_{i-k}$'s in the neighborhood of any given vertex is at most $O_{\omega}(d)$.

Finally, applying \prettyref{eq:a}--\prettyref{eq:b} to each term in \prettyref{eq:var-complete-unif}, we have
\[
\frac{1}{p^{2}_i} \Var[\s(K_i, \tG)] = O_{\omega} \left(\frac{N}{p^i} + \sum_{k=1}^{i-1} \frac{Nd}{p^{k}} \right) = O_{\omega}\left( 
\frac{N}{p^i} + \frac{Nd}{p^{i-1}} \right),
\]
which, in view of \prettyref{eq:var-cc-unif}, yields the desired result.
\end{proof}

\subsection{Lower bound for graphs with long induced cycles} \label{sec:cycle}

\begin{theorem} \label{thm:cyclelower}
Let $ \calG(N, r) $ denote the collection of all graphs on $ N $ vertices with longest induced cycle at most $ r $, with $ r \geq 4 $. Suppose $ p < 1/2 $ and $ r \geq \frac{6}{(1-2p)^2} $. Then
\begin{equation*}
\inf_{\cchat}\sup_{G\in\calG(N, r)} \Expect_G|\cchat-\cc(G)|^2 \gtrsim Ne^{r(1-2p)^2} \wedge \frac{N^2}{r^2}.
\end{equation*}
In particular, if $ p < 1/2 $ and $ r = \Theta(\log N) $, then
\begin{equation*}
\inf_{\cchat}\sup_{G\in\calG(N, r)} \Expect_G|\cchat-\cc(G)| \gtrsim \frac{N}{\log N}.
\end{equation*}
\end{theorem}

\begin{proof}
We will prove the lower bound via \prettyref{thm:mainlb} with $ m = 2(r-1) $. Let $ H = C_{r}+ P_{r-2}$ and $ H' = P_{2(r-1)}$. Note that $ \s(P_i, H) = \s(P_i, H') = 2r-1-i $ for $ i = 1,2,\dots, r-1 $. For an illustration of the construction when $ r = 5 $, see \prettyref{fig:cycle-lower}. Since paths of length at most $ r-1 $ are the only connected subgraphs of $ H $ and $ H' $ with at most $ r - 1 $ vertices, \prettyref{cor:counts} implies that $ H $ and $ H' $ have matching subgraph counts up to order $ r-1 $.

\begin{figure} [ht]
\centering
\begin{subfigure}[t]{0.25\textwidth}
  \centering
  \includegraphics[width=0.5\linewidth]{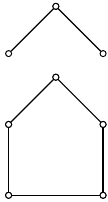}
  \caption{The graph $H$}
  \label{fig:fig1}
\end{subfigure}%
\hspace{1cm}
\begin{subfigure}[t]{0.4\textwidth}
  \centering
  \includegraphics[width=0.5\linewidth]{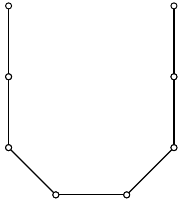}
  \caption{The graph $H'$.}
  \label{fig:fig2}
\end{subfigure}
\caption{The construction for $ r = 5 $. Each connected subgraph with $ k \leq 4 $ vertices appears exactly $ 9-k $ times in each graph.}
\label{fig:cycle-lower}
\end{figure}

In the notation of \prettyref{thm:mainlb}, $ k = r-1 $, $ m = 2(r-1) $, and $ |\cc(H)-\cc(H')| = 1 $. By \prettyref{thm:mainlb},
\begin{equation*}
\inf_{\cchat}\sup_{G\in\calG(N, r)}\prob{|\cchat-\cc(G)| \geq \Delta } \geq 0.10,
\end{equation*}
where
\begin{equation*}
\Delta \asymp |\cc(H)-\cc(H')|\left({\sqrt{\frac{N}{m\TV(P_{\tH}, P_{\tH'})}}}  \wedge \frac{N}{m} \right) = \pth{\sqrt{\frac{N}{m\TV(P_{\tH}, P_{\tH'})}}  \wedge \frac{N}{m}}.
\end{equation*}
Furthermore, by \prettyref{eq:exp-ineq}, the total variation between the sampled graphs $ \tH $ and $ \tH' $ satisfies 
\begin{equation*} 
\TV(P_{\tH}, P_{\tH'} ) \leq e^{-\frac{r^2}{r-1}(1-2p+\frac{2p}{r})^2} \leq e^{-r(1-2p)^2} < 1/300,
\end{equation*}
provided $ p < 1/2 $ and $ r \geq \frac{6}{(1-2p)^2} $.
The desired lower bound on the squared error follows from Markov's inequality.
\end{proof}

\subsection{Lower bounds for forests} \label{sec:forestlb}

Particularizing \prettyref{thm:chordallower} to $ \omega = 2 $, we obtain a lower bound which shows that the estimator for forests
$\cchat = \sfv(\tG)/p-\sfe(\tG)/p^2$ proposed by Frank \cite{Frank1978}
is minimax rate-optimal. As opposed to the general construction in \prettyref{thm:chordallower}, 
\prettyref{fig:forest} illustrates a simple construction of $ H $ and $ H' $ for forests. However, we still require that $ p $ is less than some absolute constant. Through another argument, we show that this constant can be arbitrarily close to one.

\begin{figure} [ht]
\centering
\begin{subfigure}[t]{0.2\textwidth}
  \centering
  \includegraphics[width=0.8\linewidth]{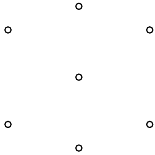}
  \caption{The graph of $H$ for $ \omega = 2 $ and $ m = 6 $.}
  \label{fig:figforest}
\end{subfigure}%
\hspace{3cm}
\begin{subfigure}[t]{0.2\textwidth}
  \centering
  \includegraphics[width=0.8\linewidth]{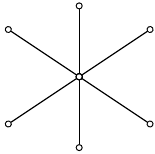}
  \caption{The graph of $H'$ for $ \omega = 2 $ and $ m = 6 $.}
  \label{fig:figforest2}
\end{subfigure}
\caption{The two graphs are isomorphic if the center vertex is not sampled and all incident edges are removed. Thus, $\TV(P_{\tH},P_{\tH'}) = p(1-q^6) $.}
\label{fig:forest}
\end{figure}

\begin{theorem}[Forests] \label{thm:forestlower}
Let $ \mathcal{F}(N,\de) = \calG(N,\de,2) $ denote the collection of all forests on $N$ vertices with maximum degree at most $ \de $. Then for all $ 0 < p < 1 $,
\begin{equation*}
\inf_{\cchat}\sup_{G\in\calF(N,\de)} \Expect_G|\cchat-\cc(G)|^2 \gtrsim \left(\frac{Nq}{p^2} \vee \frac{Nq\de}{p}\right) \wedge N^2.
\end{equation*}
In particular, if $ d = \Theta(N) $ and $ \omega \geq 2 $, then
\begin{equation*}
\inf_{\cchat}\sup_{G\in\calG(N,\de, \omega)} \Expect_G|\cchat-\cc(G)| \geq \inf_{\cchat}\sup_{G\in\calF(N,\de)} \Expect_G|\cchat-\cc(G)| \gtrsim N.
\end{equation*}
\end{theorem}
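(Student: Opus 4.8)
\emph{Overall plan.} The target bound will be established for \emph{every} $p\in(0,1)$ by reducing, via two explicit priors, to estimating the parameter of a binomial and lower bounding the resulting Bayes risk. For $p$ below an absolute constant the bound is already contained in \prettyref{thm:chordallower} with $\omega=2$ (whose construction reduces to the elementary star-versus-isolated-vertices pair of \prettyref{fig:forest}), but \prettyref{thm:mainlb} cannot be invoked once $p$ is close to $1$ since then $\TV(P_{\tH},P_{\tH'})\le 1/300$ is unattainable; the argument below removes that restriction. I will repeatedly use: (i) for quadratic loss the minimax risk is at least the Bayes risk of any prior, which equals $\Expect[\Var(\cc(G)\mid\text{obs})]$; and (ii) it suffices to bound this quantity in the more informative model that additionally reveals the set of sampled vertices, since extra information only decreases the Bayes risk.

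\emph{First prior — the term $Nq/p^2$.} Partition $[N]$ into $n_0=\floor{N/2}$ disjoint pairs (plus at most one leftover isolated vertex); put an edge on pair $j$ independently with probability $\theta$, where $\theta\sim\Unif([1/4,3/4])$, independently of the sampling. Then $G\in\mathcal F(N,\de)$ for every $\de\ge 1$, and $\cc(G)=N-X$ with $X\sim\Binom(n_0,\theta)$. A pair's edge is observable exactly when both endpoints are sampled, an event depending only on the sampling; conditionally on the sampling, the observation is independent of the edges of the $n_0-K$ unrevealed pairs (deleting one endpoint makes the two possibilities identical). Hence, with $K$ (the number of revealed pairs) distributed as $\Binom(n_0,p^2)$ and determined by the data, $X$ equals a known quantity plus $\Binom(n_0-K,\theta)$ with $\theta$ carrying its posterior, so
\[
\Var(\cc(G)\mid\text{obs}) = (n_0-K)\,\Expect[\theta(1-\theta)\mid\text{obs}] + (n_0-K)^2\,\Var(\theta\mid\text{obs}).
\]
Since $\theta\in[1/4,3/4]$ the first term contributes $\gtrsim\Expect[n_0-K]=n_0(1-p^2)\asymp Nq$; since the posterior variance of $\theta$ after $K$ Bernoulli observations is of order $1/(1+K)$ on an event of constant probability, a case split on whether $n_0p^2\gtrsim1$ makes the second term contribute $\gtrsim \tfrac{Nq^2}{p^2}\wedge N^2$. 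As $q^2\asymp q$ for $p\le\tfrac12$ and the first term dominates for $p>\tfrac12$, this gives minimax risk $\gtrsim \tfrac{Nq}{p^2}\wedge N^2$ for all $p$.

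\emph{Second prior — the term $N\de q/p$.} Partition $[N]$ into $m_0=\floor{N/(\de+1)}$ ``slots'', each a center with $\de$ leaves (plus leftover isolated vertices); independently make slot $s$ a star $S_\de$ with probability $\theta\sim\Unif([1/4,3/4])$, else leave it edgeless. Then $G\in\mathcal F(N,\de)$ and $\cc(G)=N-\de Y$ with $Y\sim\Binom(m_0,\theta)$ the number of stars. Arguing as in \prettyref{lmm:coupling}, a slot is ``revealed'' iff its center and at least one leaf are sampled (probability $p(1-q^\de)$, sampling-determined), and otherwise its induced subgraph — some isolated vertices — has a law not depending on whether it is a star. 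With $R\sim\Binom(m_0,p(1-q^\de))$ the number of revealed slots,
\[
\Var(\cc(G)\mid\text{obs}) = \de^2\bigl[(m_0-R)\,\Expect[\theta(1-\theta)\mid\text{obs}] + (m_0-R)^2\,\Var(\theta\mid\text{obs})\bigr];
\]
taking expectations, the first bracketed term gives $\gtrsim \de^2 m_0 q\asymp N\de q$ (using $q^\de\le q$), and in the regime $\de p\gtrsim 1$ where $1-q^\de\asymp 1$ the second gives $\gtrsim \de^2 m_0^2\,\Expect[\Var(\theta\mid\text{obs})]\gtrsim \tfrac{N\de q^2}{p}\wedge N^2$ after splitting on whether $m_0 p\gtrsim 1$. (When $\de p\lesssim 1$, $N\de q/p\le Nq/p^2$ is already covered.) Combining the two priors, $\inf_{\cchat}\sup_{G\in\mathcal F(N,\de)}\Expect_G|\cchat-\cc(G)|^2\gtrsim\bigl(\tfrac{Nq}{p^2}\vee\tfrac{N\de q}{p}\bigr)\wedge N^2$ for all $p\in(0,1)$ and $1\le\de\le N$. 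For the ``in particular'' claim, $\de=\Theta(N)$ makes this $\gtrsim (\tfrac{N^2q}{p})\wedge N^2\asymp N^2$ for fixed $p<1$ (alternatively: Le~Cam's two-point bound applied to $S_{N-1}$ versus $NK_1$, whose sampled versions coincide unless the center is sampled), so taking square roots and using $\mathcal F(N,\de)\subseteq\calG(N,\de,\omega)$ for $\omega\ge 2$ finishes.

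\emph{Main obstacle.} The crux is the regime where $p$ is \emph{not} small: one must produce the sharp exponents $p^{-2}$ and $\de p^{-1}$ rather than merely $p^{-1}$, which rules out the two-fuzzy-hypotheses device and forces a direct control of $\Expect[\Var(\cc(G)\mid\text{obs})]$. The technical work lies in (i) correctly identifying the unobservable part of the edge (resp.\ star) count as a $\Binom(n_0-K,\theta)$ (resp.\ $\Binom(m_0-R,\theta)$) conditionally on the data, and (ii) lower bounding the posterior variance of the nuisance parameter $\theta$ through the random, sampling-dependent counts $K$ and $R$ — together with the routine bookkeeping of the two contributions, the $\wedge N^2$ saturation, and the integer roundings in $n_0$ and $m_0$.
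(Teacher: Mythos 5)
Your argument is correct in substance but follows a genuinely different route from the paper's. The paper also reduces to a binomial experiment, but via a \emph{sufficiency} argument on a single one-parameter family $\{(N-m(\de+1))S_0+mS_\de: 0\le m\le M\}$, $M=N/(\de+1)$: it shows that the number of observed non-trivial stars $T\sim\Binom(m,p')$ with $p'=p(1-q^{\de})$ is a sufficient statistic for the sampled graph, and then invokes a two-point bound for estimating the number of trials of a binomial with known success probability (\prettyref{lmm:binom-lower}). Because $p'\le p\wedge p^2\de$ and $1-p'\ge q$, this one construction delivers both the $Nq/p^2$ and the $N\de q/p$ terms (and the $N^2$ cap) simultaneously for every $p\in(0,1)$. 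You instead place a continuous hyperprior $\theta$ on the density of edges (resp.\ stars) and lower bound the Bayes risk by $\Expect[\Var(\cc(G)\mid \text{obs})]$ via the law of total variance, which requires two separate priors, one per term. Both are valid; the paper's sufficiency reduction gives tighter bookkeeping (one construction, one clean binomial lemma), while yours is more mechanical and makes the origin of each term transparent (sampling noise from unrevealed units versus posterior uncertainty in $\theta$). Two points to tighten: (i) the step $\Expect[(n_0-K)^2\Var(\theta\mid\text{obs})]\gtrsim n_0^2q^2/(1+n_0p^2)$ needs a restriction to a constant-probability event in $K$ (which is legitimate since $K$ is determined by the sampling alone, independently of $\theta$) together with the standard fact that the Bayes risk for a Bernoulli parameter supported on $[1/4,3/4]$ from $K$ trials is $\gtrsim 1/(K+1)$; this is routine but should be written out. (ii) ``Taking square roots'' does not convert the $L^2$ lower bound into the claimed $L^1$ bound (Jensen's inequality goes the wrong way), but your parenthetical two-point argument comparing $S_{N-1}$ with $NK_1$ does give $\Expect|\cchat-\cc(G)|\gtrsim N$ directly, so the ``in particular'' claim stands.
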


\begin{proof}
The strategy is to choose a one-parameter family of forests $ \calF_0 $ and reduce the problem to
 estimating the total number of trials in a binomial experiment with a given success probability. 
To this end, define $ M = N/(d+1) $ and let
\begin{equation*}
\calF_0 = \{ (N-m(d+1))S_0 + mS_d : m \in \{0, 1, \dots, M\} \}.
\end{equation*}
Let $ G \in \calF_0 $. Because we do not observe the labels $ \{ b_v : v \in V(G) \} $, the distribution of $ \tG $ can be described by the vector $ (T_0, T_1, \dots, T_d) $, where $ T_j $ is the observed number of $ S_j $. Since $ T_0 = N - \sum_{j\geq 1} (j+1)T_j $, it follows that $ (T_1, \dots, T_d) $ is sufficient for $ \tG $. Next, we will show that $ T = T_1 + \cdots + T_d \sim \Binom(m, p') $, where $ p' \triangleq p(1-q^d) $ is sufficient for $ \tG $. To this end, note that conditioned on $T = n $, the probability mass function of $ (T_1, \dots, T_d)$ at $ (n_1, \dots, n_d) $ is equal to
\begin{align*}
\frac{\prob{T_1 = n_1, \dots, T_d = n_d, T = n}}{\prob{T = n}} & = \frac{\binom{m}{n}\binom{n}{n_1, \dots, n_d}p^{n_1}_1\cdots p^{n_d}_d(1-p')^{m-n}}{\binom{m}{n}(p')^n(1-p')^{m-n}} \\ & = \binom{n}{n_1, \dots, n_d}(p_1/p')^{n_1}\cdots (p_d/p')^{n_d},
\end{align*}
where $ p_j \triangleq \binom{d}{j} p^j q^{d-j} $. Thus, $ (T_1, \dots, T_d) \mid T = n \sim \text{Multinomial}(n, p_1/p', \dots, p_d/p' ) $, whose distribution is independent of $ m $.
Thus, since $ \cc(G) = N - md $, we have that
\begin{align*}
\inf_{\cchat}\sup_{G\in\calF(N,\de)} \Expect_G|\cchat-\cc(G)|^2 & \geq \inf_{\cchat}\sup_{G\in\calF_0} \Expect_G|\cchat-\cc(G)|^2 \\
& = d^2\inf_{\widehat{m}(T)}\sup_{m\in \{0, 1, \dots, M\} } \Expect_{T\sim \Binom(m, p')}|\widehat{m}(T)-m|^2 \\
& \gtrsim \left(\frac{Nq}{p^2} \vee \frac{Nq\de}{p}\right) \wedge N^2,
\end{align*}
which follows applying \prettyref{lmm:binom-lower} below with $ \alpha = p' $ and $ M = N/(d+1) $ and 
the fact that $p'= p(1-q^d) \leq p \wedge (p^2d) $.
\end{proof}

The proof of \prettyref{lmm:binom-lower} is given in \prettyref{app:proofs}.
\begin{lemma}[Binomial experiment] \label{lmm:binom-lower}
Let $ X\sim \Binom(m, \alpha) $. For all $ 0 \leq \alpha \leq 1 $ and $M \in \naturals$ known a priori,
\begin{equation*}
\inf_{\widehat{m}}\sup_{m\in \{0, 1, \dots, M\} } \Expect|\widehat{m}(X)-m|^2 \asymp \frac{(1-\alpha)M}{\alpha} \wedge M^2.
\end{equation*}
\end{lemma}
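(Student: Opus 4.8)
The plan is to prove the matching upper and lower bounds separately; the lower bound is the substantive part.

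For the \textbf{upper bound} I would exhibit two estimators and keep whichever has smaller worst-case risk. The clipped Horvitz--Thompson estimator $\widehat m = 0\vee (X/\alpha)\wedge M$ is the Euclidean projection onto $[0,M]$ of the unbiased estimator $X/\alpha$, which has variance $m(1-\alpha)/\alpha\le M(1-\alpha)/\alpha$; projecting onto an interval containing the true $m$ cannot increase squared error, so its worst-case risk is at most $\tfrac{(1-\alpha)M}{\alpha}$. The constant estimator $\widehat m\equiv M/2$ has worst-case risk at most $M^2/4$. Together they give $\inf_{\widehat m}\sup_m \Expect|\widehat m-m|^2\le \tfrac{(1-\alpha)M}{\alpha}\wedge M^2$.

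For the \textbf{lower bound} the workhorse is Le Cam's two-point inequality, $\inf_{\widehat m}\sup_m\Expect|\widehat m-m|^2\gtrsim (m_1-m_0)^2\bigl(1-\TV(\Binom(m_0,\alpha),\Binom(m_1,\alpha))\bigr)$, together with one auxiliary estimate: for any $n$ and $0\le\gamma\le1$, $\TV(\Binom(n,\gamma),\Binom(n+1,\gamma))\le \gamma\max_k\Binom(n,\gamma)(k)\lesssim \gamma/\sqrt{n\gamma(1-\gamma)+1}$. The first inequality holds because $\Binom(n+1,\gamma)$ is the mixture of $\Binom(n,\gamma)$ and its unit translate with weights $1-\gamma$ and $\gamma$, and the total variation between a unimodal lattice law and its unit translate equals its largest atom; the second is the classical bound on the mode of a binomial. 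Summing this along a path and applying the triangle inequality bounds $\TV(\Binom(m_0,\alpha),\Binom(m_0+\Delta,\alpha))$. I would then split into three regimes. (i) If $\alpha\le 1/(M+1)$ the target equals $M^2$, and the points $m_0=0$, $m_1=M$ work because $(1-\alpha)^M\ge(1-\tfrac1{M+1})^M\ge 1/e$, so the sampled laws are at total variation distance at most $1-1/e$ from each other. (ii) If $\alpha>1/(M+1)$ and $\tfrac{(1-\alpha)M}{\alpha}$ exceeds a large absolute constant, then (using $\alpha>1/(M+1)$) the target is $\asymp\tfrac{(1-\alpha)M}{\alpha}\le 2M^2$, and I would localize: take $m_0=\floor{M/2}$ and a perturbation $\Delta\asymp\sqrt{(1-\alpha)M/\alpha}$, which lies between $1$ and $M/2$, so that the auxiliary estimate gives $\TV\le \tfrac12$ and hence a lower bound $\gtrsim\Delta^2\asymp\tfrac{(1-\alpha)M}{\alpha}$.

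The \emph{remaining regime} -- $\alpha>1/(M+1)$ with $\tfrac{(1-\alpha)M}{\alpha}$ bounded by a constant -- is the crux and the \textbf{main obstacle}: here $\alpha$ is forced to be close to $1$ (so $\alpha>\tfrac12$), the target $\asymp(1-\alpha)M$ can be below $1$, and no two-point bound suffices because consecutive binomials $\Binom(m,\alpha),\Binom(m+1,\alpha)$ can be at total variation distance close to $1$. I would instead invoke a Bayes-risk lower bound with $m$ uniform on $\{0,\dots,M\}$ and control the posterior variance: writing $\beta=1-\alpha$, for $x\le M-1$ the posterior of $m$ given $X=x$ is a truncated negative binomial with $\Prob[m=x\mid X=x]\ge(1-\beta)^{M+1}\ge c>0$ (bound the normalizer $\sum_{j\ge0}\binom{x+j}{j}\beta^j$ by $(1-\beta)^{-(x+1)}$) and $\Prob[m=x+1\mid X=x]/\Prob[m=x\mid X=x]=(x+1)\beta$, so restricting the posterior second moment to the atoms $x,x+1$ gives $\Var(m\mid X=x)\gtrsim(x+1)\beta$; averaging over $X$, with $\Expect[X]=\alpha M/2$ and $(M+1)\Prob[X=M]=\alpha^M\le1$, yields $R_\pi\gtrsim \beta\,\Expect[X]\asymp(1-\alpha)M$, matching the target. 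Finally, values of $M$ below the absolute constants used above are handled trivially, since then both sides of the claimed equivalence are $\Theta(1)$ and a constant lower bound follows from, e.g., comparing $m=M-1$ with $m=M$ (whose sampled laws are at distance at most $\alpha<1$).
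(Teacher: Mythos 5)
Your proof is correct, but the lower bound follows a genuinely different route from the paper's. The paper handles the lower bound with a single two-point Le Cam argument at $m_1=M$ and $m_2=M-\sqrt{\gamma M/\alpha}\wedge M$, bounding the total variation by the Hellinger distance and invoking an external estimate on the Hellinger distance between two binomials (\cite[Lemma 21]{Polyanskiy2017}) with $\gamma=(1-\alpha)/16$; this is compact but leans on a cited inequality. You instead make the argument self-contained: you control $\TV(\Binom(m_0,\alpha),\Binom(m_0+\Delta,\alpha))$ by chaining unit increments via the mixture identity $\Binom(n+1,\gamma)=(1-\gamma)\Binom(n,\gamma)+\gamma\,T\Binom(n,\gamma)$ and the fact that the total variation between a unimodal lattice law and its unit translate equals its largest atom, and you correctly isolate the regime $\alpha\to 1$ with $(1-\alpha)M/\alpha=O(1)$, where any two-point bound degenerates (consecutive binomials are at total variation close to $1$ and the required separation $\Delta$ drops below one); there you switch to a Bayes posterior-variance argument under the uniform prior, which cleanly delivers the $\gtrsim(1-\alpha)M$ rate. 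The only blemish is your closing remark that for $M$ below the absolute constants ``both sides are $\Theta(1)$'': this is false when $\alpha$ is close to $1$ (the target is $\asymp 1-\alpha$, which can be arbitrarily small even for $M=O(1)$); however, the comparison you then cite, $m=M-1$ versus $m=M$ with $\TV\leq\alpha$, yields exactly the lower bound $\gtrsim 1-\alpha$ that matches the target for bounded $M$, so the argument stands once that sentence is corrected. Your upper bound (clipped Horvitz--Thompson versus the constant estimator) is essentially the paper's.
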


\begin{proof}
The upper bound follows from choosing $ \widehat{m} = X/\alpha $ when $ \alpha > (1-\alpha)/M $ and $ \widehat{m} = (M+1)/2 $ when $ \alpha \leq (1-\alpha)/M $.

For the lower bound, let $ \gamma > 0 $. Consider the two hypothesis $ H_1: m_1 = M $ and $ H_2: m_2 = M - \sqrt{\frac{\gamma M}{\alpha}} \wedge M $. By Le Cam's two point method \cite[Theorem 2.2(i)]{Tsybakov2009},
\begin{align*}
\inf_{\widehat{m}}\sup_{m\in \{0, 1, \dots, M\} } \Expect|\widehat{m}(X)-m|^2
& \geq \frac{1}{2}|m_1 - m_2|^2[1 - \TV( \Binom(m_1, \alpha), \Binom(m_2, \alpha))] \\
& \geq \frac{\frac{\gamma M}{\alpha} \wedge M^2}{2}[1 - \hellinger( \Binom(m_1, \alpha), \Binom(m_2, \alpha))],
\end{align*}
where we used the inequality between total variation and the Hellinger distance $ \TV \leq \hellinger$  \cite[Lemma 2.3]{Tsybakov2009}. Finally, choosing $ \gamma = (1-\alpha)/16 $ and using the bound in \cite[Lemma 21]{Polyanskiy2017} on the Hellinger distance between two binomials, we obtain 
\[
\hellinger( \Binom(m_1, \alpha), \Binom(m_2, \alpha)) \leq 1/2 
\]
as desired.
\end{proof}

\section{Numerical experiments} \label{app:experiments}

In this section, we study the empirical performance of the estimators proposed in \prettyref{sec:algorithms} using synthetic data from various random graphs. The error bars in the following plots show the variability of the relative error $ \frac{|\cchat-\cc(G)|}{\cc(G)} $ over 20 independent experiments of subgraph sampling on a fixed parent graph $ G $. The solid black horizontal line shows the sample average and the whiskers show the mean $ \pm $ the standard deviation. 

\subsection{Synthetic experiment}

\paragraph{Chordal graphs}
Both \prettyref{fig:chordal} and \prettyref{fig:exp-smooth} focus on chordal graphs, where the parent graph is first generated from a random graph ensemble then triangulated by calculating a fill-in of edges to make it chordal (using a maximum cardinality search algorithm from \cite{Tarjan1984}).
In \prettyref{fig:fig1}, the parent graph $G$ is a triangulated Erd\"os-R\'enyi graph $ \calG(N, \delta) $, with $ N = 2000 $ and $ \delta = 0.0005 $ which is below the connectivity threshold $ \delta = \frac{\log N}{N} $ \cite{Erdos1960}. 
In \prettyref{fig:fig2}, we generate $ G $ with $N=20000$ vertices by taking the disjoint union of 200 independent copies of $ \calG(100, 0.2) $ and then apply triangulation.  In accordance with \prettyref{thm:chordal}, the better performance in \prettyref{fig:fig2} is due to moderately sized $ \de $ and $ \omega $, and large $ \cc(G) $.

In \prettyref{fig:exp-smooth} we perform a simulation study of the smoothed estimator $ \cchat_L $ from \prettyref{thm:chordal_smooth}. The parent graph is equal to a triangulated realization of $ \calG(1000, 0.0015) $ with $ \de = 88 $, $ \omega = 15 $, and $ \cc(G) = 325 $. The plots in \prettyref{fig:fig6} show that the sampling variability is significantly reduced for the smoothed estimator, particularly for small values of $ p $ (to show detail, the vertical axes are plotted on different scales). This behavior is in accordance with the upper bounds furnished in \prettyref{thm:chordal} and \prettyref{thm:chordal_smooth}. Large values of $ \omega $ inflate the variance of $ \cchat $ considerably by an exponential factor of $ 1/p^{\omega} $, whereas the effect of large $ \omega $ on the variance of $ \cchat_L $ is polynomial, viz., $ \omega^{\frac{p}{2-3p}} $. We chose the smoothing parameter $ \lambda $ to be $ p \log N $, but other values that improve the performance can be chosen through cross-validation on various known graphs.

The non-monotone behavior of the relative error in \prettyref{fig:fig5} can be explained by the tradeoff between increasing $ p $ (which improves the accuracy) and increasing probability of observing a clique (which increases the variability, particularly in this case of large $ \omega $). Such behavior is apparent for moderate values of $ p $ (e.g., $ p < 0.25 $), but less so as $ p $ increases to $ 1 $ since the mean squared error tends to zero as more of the parent graph is observed. The plots also suggest that the marginal benefit (i.e., the marginal decrease in relative error) from increasing $ p $ diminishes for moderate values of $ p $. Future research would address the selection of $ p $, if such control was available to the experimenter.

\paragraph{Non-chordal graphs}
Finally, in \prettyref{fig:nonchordal} we experiment with sampling non-chordal graphs. As proposed in \prettyref{sec:non-chordal}, one heuristic is to modify the original estimator by first triangulating the subsampled graph $ \tG$ to $\mathsf{TRI}(\tG) $ and then applying the estimator $ \cchat $ in \prettyref{eq:chordal-estimator-peo}.
The plots in \prettyref{fig:nonchordal} show that this strategy works well; in fact the performance is competitive with the same estimator in \prettyref{fig:chordal}, where the parent graph is first triangulated and then subsampled.

\begin{figure} [H]
\centering
\begin{subfigure}[t]{0.4\textwidth}
  \centering
  \includegraphics[width=1\linewidth]{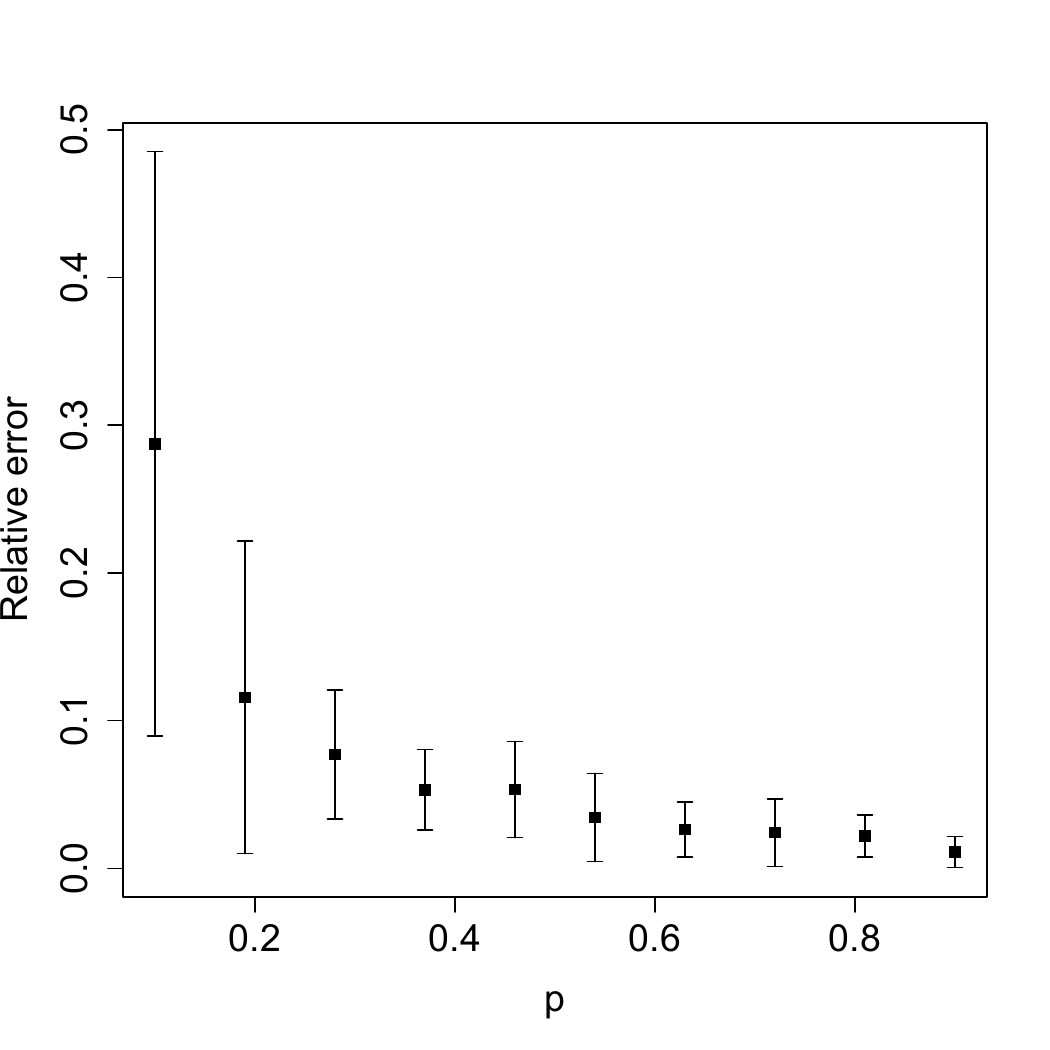}
  \caption{Parent graph equal to a triangulated realization of $ \calG(2000, 0.0005) $ with $ \de = 36 $, $ \omega = 5 $, and $ \cc(G) = 985 $.}
  \label{fig:fig1}
\end{subfigure}%
\qquad
\begin{subfigure}[t]{0.4\textwidth}
  \centering
  \includegraphics[width=1\linewidth]{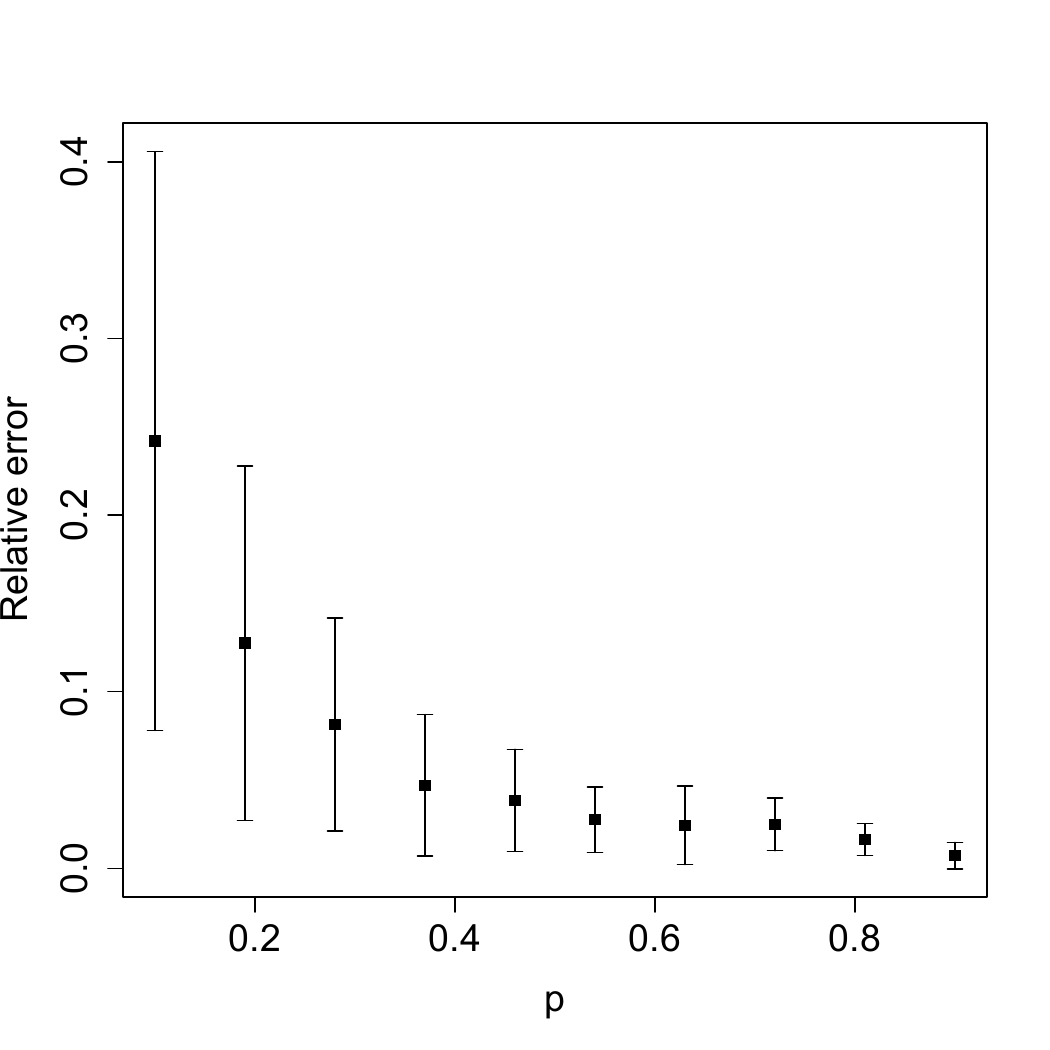}
  \caption{Parent graph equal to a triangulated realization of 200 copies of $ \calG(100, 0.2) $ with $ \de = 8 $, $ \omega = 4 $, and $ \cc(G) = 803 $.}
  \label{fig:fig2}
\end{subfigure}
\caption{The relative error of $ \cchat $ with moderate values of $ d $ and $ \omega $.}
\label{fig:chordal}
\end{figure}

\begin{figure} [H]
\centering
\begin{subfigure}[t]{0.4\textwidth}
  \centering
  \includegraphics[width=1\linewidth]{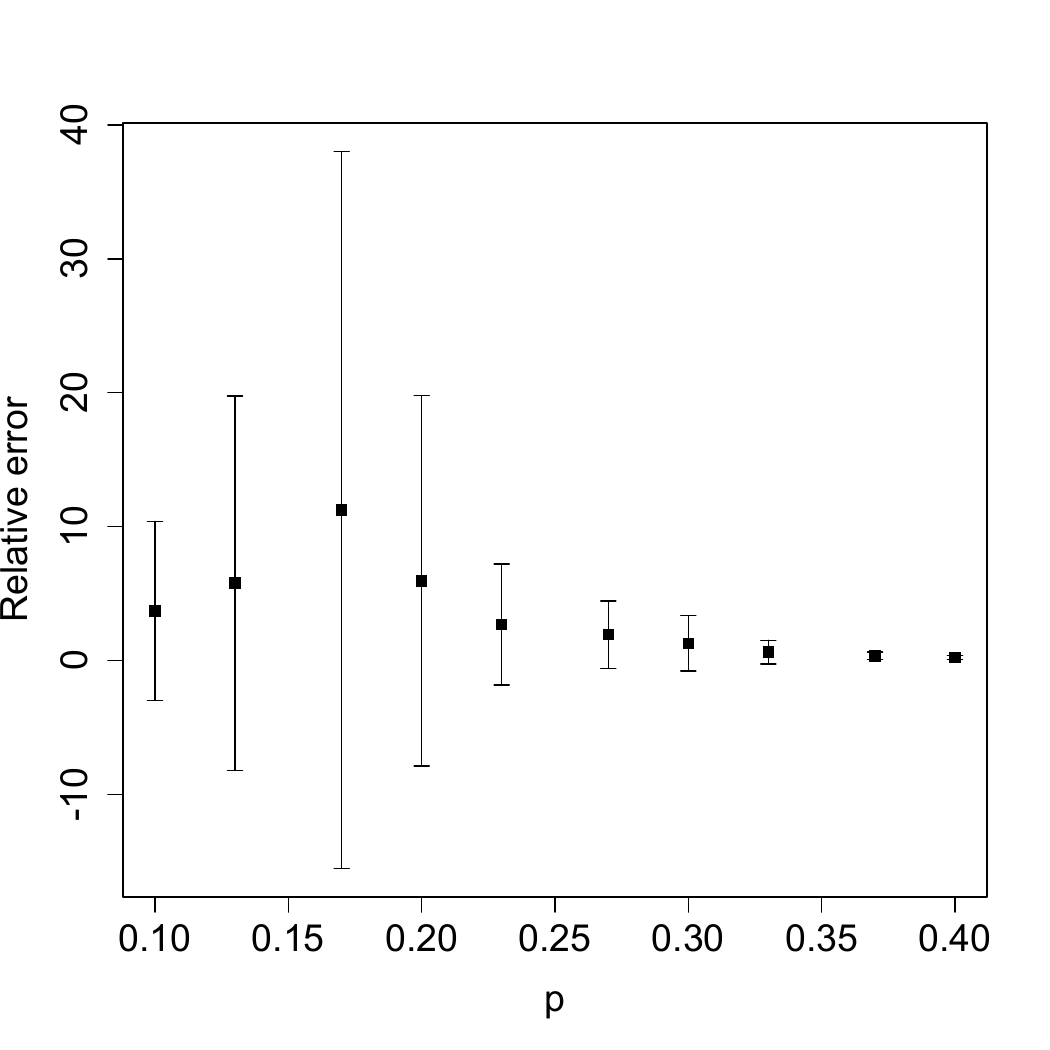}
  \caption{Non-smoothed $\cchat$.}
  \label{fig:fig5}
\end{subfigure}%
\qquad
\begin{subfigure}[t]{0.4\textwidth}
  \centering
  \includegraphics[width=1\linewidth]{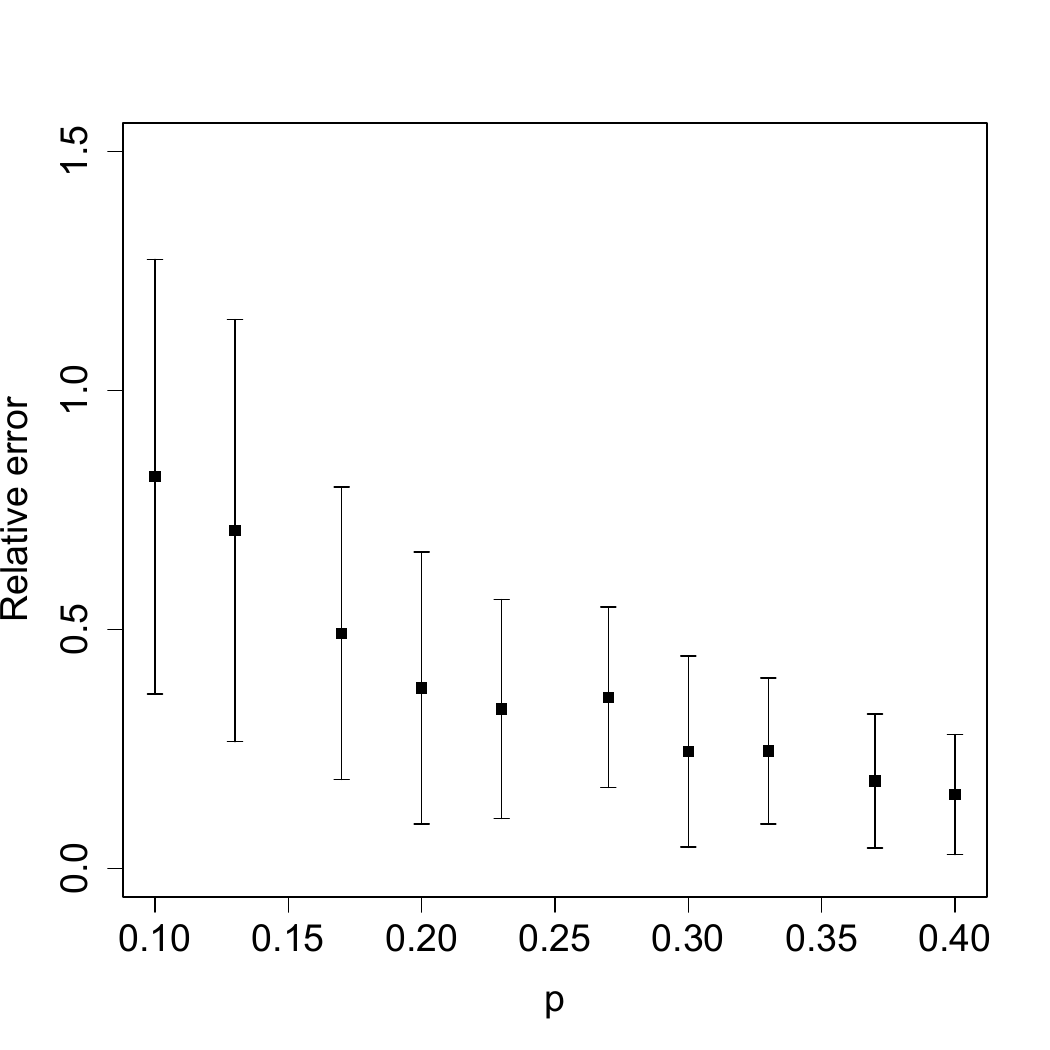}
  \caption{Smoothed $\widehat{cc}_L$.}
  \label{fig:fig6}
\end{subfigure}
\caption{A comparison of the relative error of the unbiased estimator $ \cchat $ in \prettyref{eq:chordal-estimator-peo} and its smoothed version $ \cchat_L $ in \prettyref{eq:chordal_smooth_estimator}. The parent graph is a triangulated realization of $ \calG(1000, 0.0015) $ with $ \de = 88 $, $ \omega = 15 $, and $ \cc(G) = 325 $.}
\label{fig:exp-smooth}
\end{figure}

\begin{figure} [H]
\centering
\begin{subfigure}[t]{0.4\textwidth}
  \centering
  \includegraphics[width=1\linewidth]{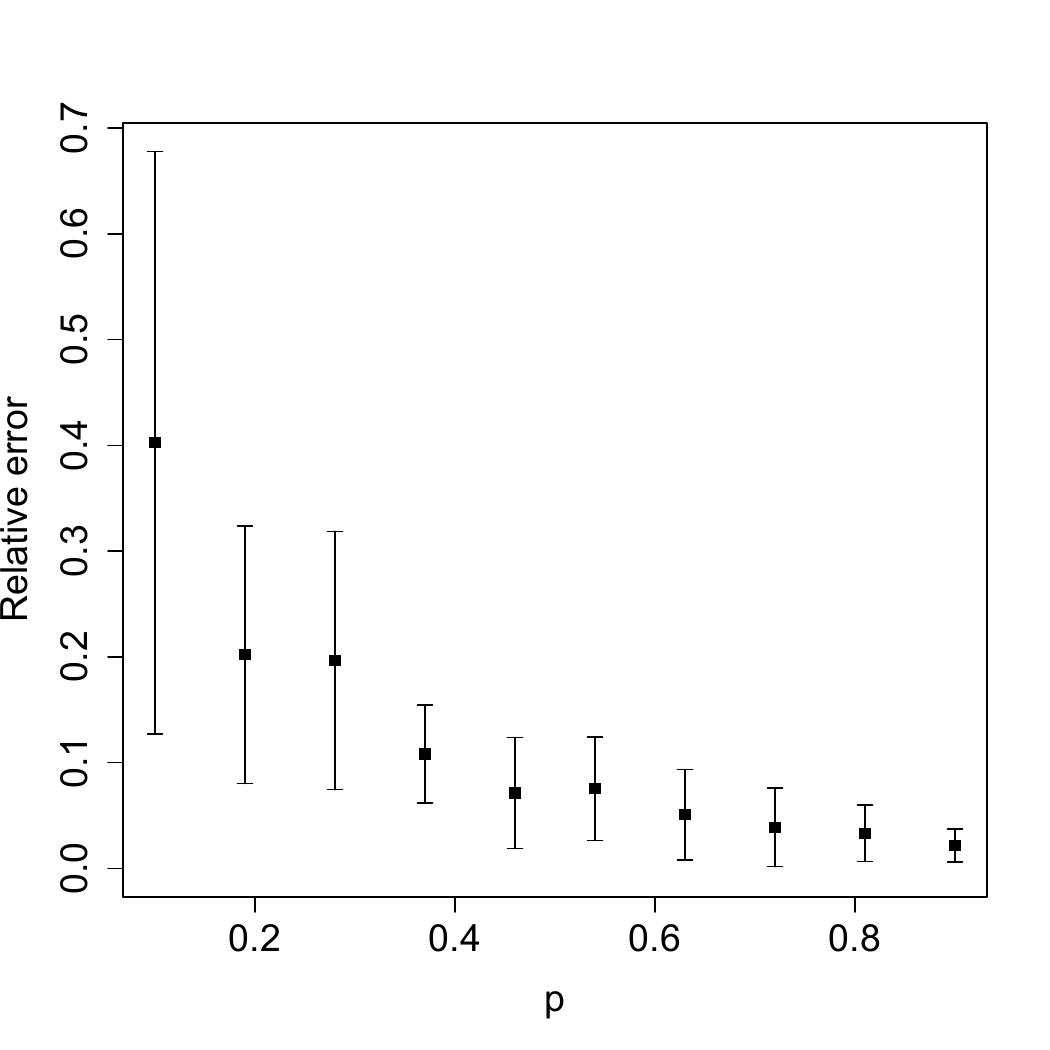}
  \caption{Parent graph equal to a realization of $ \calG(2000, 0.0005) $ with $ \de = 8 $, $ \omega = 3 $, and $ \cc(G) = 756 $.}
  \label{fig:fig3}
\end{subfigure}%
\qquad
\begin{subfigure}[t]{0.4\textwidth}
  \centering
  \includegraphics[width=1\linewidth]{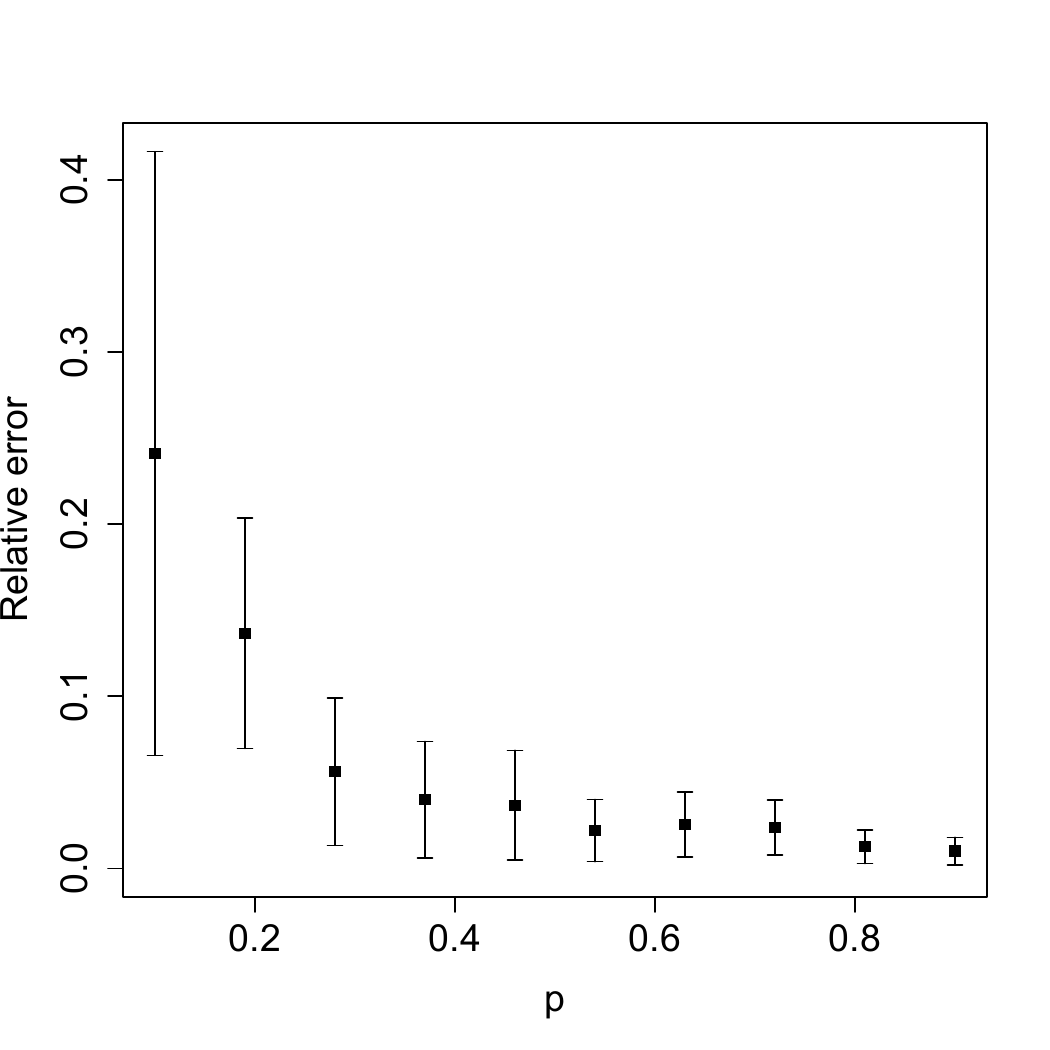}
  \caption{Parent graph equal to a realization of 200 copies of $ \calG(100, 0.2) $ with $ \de = 7 $, $ \omega = 4 $, and $ \cc(G) = 532 $.}
  \label{fig:fig4}
\end{subfigure}
\caption{The estimator  $\cchat(\mathsf{TRI}(\tG)) $ applied to non-chordal graphs. }
\label{fig:nonchordal}
\end{figure}

\subsection{Real-data experiment}

The point of developing a theory for graphs with specific structure (i.e.~chordal) is to (a) study how the graph parameters (such as maximal degree) impact the estimation difficulty and (b) motivate a heuristic for real-world graphs encountered in practice. Indeed, as with all problems in a minimax framework, a certain amount of finesse is required to define a parameter space that showcases the richness of the problem, while at the same time, enables one to provide a characterization of the fundamental limits of estimation. Chordal graphs seem to fit this purpose. Importantly, they serve as a catalyst for more general strategies that apply to a wider collection of parent graphs, including those commonly encountered in practice.

In the previous subsection, we studied the estimators $ \cchat $ and $ \cchat_L $ using synthetic data on moderately sized, chordal parent graphs. In this section, we consider real-world instances of network datasets, where the parent graph is not chordal and the number of nodes is large. More specifically, we consider two representative examples of collaboration and biological networks. We believe these examples show the usefulness of our estimators on real-world data, despite the fact that the methodology was developed for chordal parent graphs.

The first network \cite{collab} is the collaboration network of authors with arXiv ``cond-mat'' (condense matter physics) articles submitted between January 1993 and April 2003. Note that the category has been active since April 1992. An edge is attached between two researchers in the network if and only if they co-authored a paper together. 

The second network \cite[Supplementary Table S2]{Rual2005} is an initial version of a proteome-scale map of human binary protein-protein interactions (i.e., edges represent direct physical interactions between two protein molecules). Because self-loops do not affect the connectivity of the network, we removed them from the dataset.

We use the smoothed estimator $ \cchat_L $ on both networks; the standard estimator $ \cchat $ performs poorly because of high-degree vertices and large clique numbers (c.f., \prettyref{fig:exp-smooth}). To deal with the non-chordal parent graphs, we again use the heuristic proposed in \prettyref{sec:non-chordal} of first triangulating the subsampled graph $ \tG$ to $\mathsf{TRI}(\tG) $ and then applying the smoothed estimator $ \cchat_L $ in \prettyref{eq:chordal_smooth_estimator}. The results of this estimation scheme on both networks are displayed in \prettyref{fig:empirical}.

\begin{figure} [H]
\begin{subfigure}[t]{0.4\textwidth}
  \centering
  \includegraphics[width=1\linewidth]{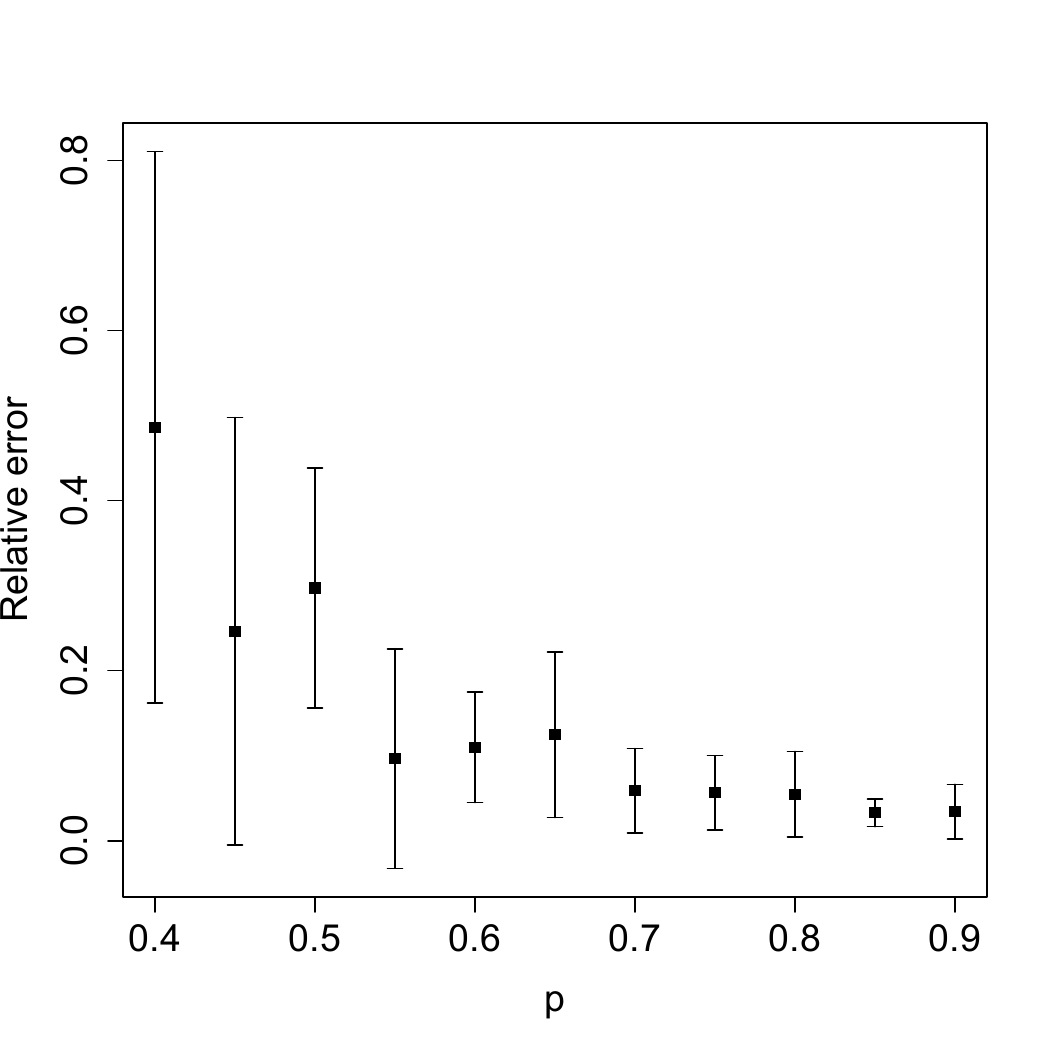}
  \caption{Collaboration network of arXiv condense matter physics: $ N = 23133 $, $ \sfe(G) = 93439 $, $ \de = 279 $, and $ \cc(G) = 567 $.}
  \label{fig:fig5}
\end{subfigure}%
\qquad
\centering
\begin{subfigure}[t]{0.4\textwidth}
  \centering
  \includegraphics[width=1\linewidth]{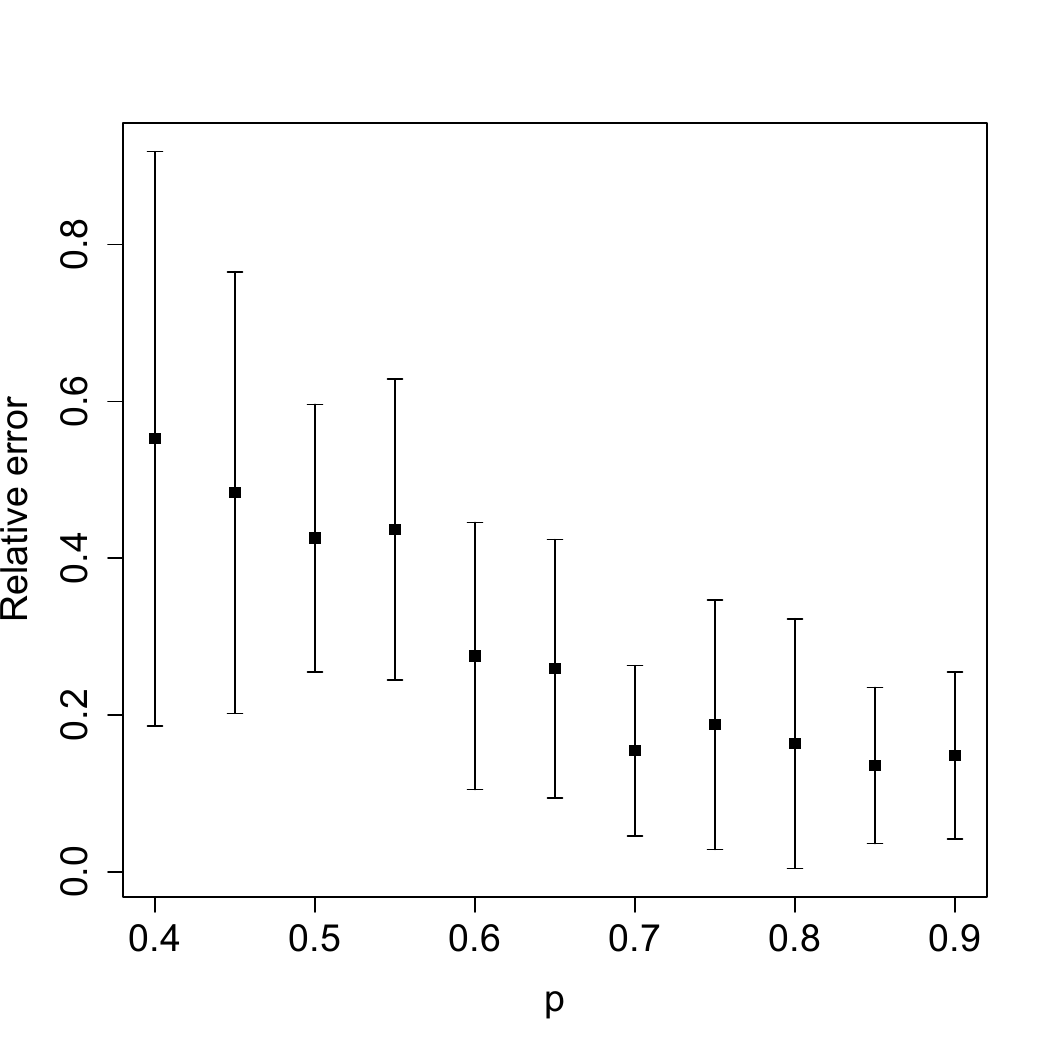}
  \caption{Human protein-protein network: $ N = 3133 $, $ \sfe(G) = 6149 $, $ \de = 129 $, and $ \cc(G) = 210 $.}
    \label{fig:fig6}
\end{subfigure}%
\caption{Smoothed estimator $\cchat_L(\mathsf{TRI}(\tG)) $ applied to a collaboration and biological network.}
\label{fig:empirical}
\end{figure}

\bibliographystyle{plain}
\bibliography{ccref}

\begin{thebibliography}{10}

\bibitem{Stegun1964}
Milton Abramowitz and Irene~A. Stegun, editors.
\newblock {\em Handbook of mathematical functions with formulas, graphs, and
  mathematical tables}.
\newblock Dover Publications, Inc., New York, 1992.
\newblock Reprint of the 1972 edition.

\bibitem{aliakbarpour2017sublinear}
Maryam Aliakbarpour, Amartya~Shankha Biswas, Themis Gouleakis, John Peebles,
  Ronitt Rubinfeld, and Anak Yodpinyanee.
\newblock Sublinear-time algorithms for counting star subgraphs via edge
  sampling.
\newblock {\em Algorithmica}, pages 1--30, 2017.

\bibitem{Apicella2012}
Coren~L. Apicella, Frank~W. Marlowe, James~H. Fowler, and Nicholas~A.
  Christakis.
\newblock Social networks and cooperation in hunter-gatherers.
\newblock {\em Nature}, 481(7382):497--501, 01 2012.

\bibitem{Bandiera2006}
Oriana Bandiera and Imran Rasul.
\newblock Social networks and technology adoption in northern {M}ozambique.
\newblock {\em The Economic Journal}, 116(514):869--902, 2006.

\bibitem{Peres2017}
Anna Ben-Hamou, Roberto~I Oliveira, and Yuval Peres.
\newblock Estimating graph parameters via random walks with restarts.
\newblock {\em arXiv preprint arXiv:1709.00869}, 2017.

\bibitem{Berenbrink2014}
Petra Berenbrink, Bruce Krayenhoff, and Frederik Mallmann-Trenn.
\newblock Estimating the number of connected components in sublinear time.
\newblock {\em Inform. Process. Lett.}, 114(11):639--642, 2014.

\bibitem{Borgs2008}
C.~Borgs, J.~T. Chayes, L.~Lov\'asz, V.~T. S\'os, and K.~Vesztergombi.
\newblock Convergent sequences of dense graphs. {I}. {S}ubgraph frequencies,
  metric properties and testing.
\newblock {\em Adv. Math.}, 219(6):1801--1851, 2008.

\bibitem{capobianco1972estimating}
Michael Capobianco.
\newblock Estimating the connectivity of a graph.
\newblock {\em Graph Theory and Applications}, pages 65--74, 1972.

\bibitem{Chandrasekhar2011}
Arun Chandrasekhar and Randall Lewis.
\newblock Econometrics of sampled networks.
\newblock {\em Unpublished manuscript}, 2011.

\bibitem{Chazelle2005}
Bernard Chazelle, Ronitt Rubinfeld, and Luca Trevisan.
\newblock Approximating the minimum spanning tree weight in sublinear time.
\newblock {\em SIAM J. Comput.}, 34(6):1370--1379, 2005.

\bibitem{ChenShrivastavaSteorts2017}
Beidi Chen, Anshumali Shrivastava, and Rebecca~C Steorts.
\newblock Unique entity estimation with application to the {S}yrian conflict.
\newblock {\em arXiv preprint arXiv:1710.02690}, 2017.

\bibitem{Conley2010}
Timothy~G. Conley and Christopher~R. Udry.
\newblock Learning about a new technology: Pineapple in ghana.
\newblock {\em American Economic Review}, 100(1):35--69, March 2010.

\bibitem{Duffield2014}
Graham Cormode and Nick Duffield.
\newblock Sampling for big data: a tutorial.
\newblock In {\em Proceedings of the 20th ACM SIGKDD International Conference
  on Knowledge Discovery and Data Mining}, pages 1975--1975. ACM, 2014.

\bibitem{dohmen2013lower}
Klaus Dohmen.
\newblock Lower bounds for the probability of a union via chordal graphs.
\newblock {\em Electronic Communications in Probability}, 18, 2013.

\bibitem{Eden2015}
Talya Eden, Amit Levi, Dana Ron, and C.~Seshadhri.
\newblock Approximately counting triangles in sublinear time.
\newblock In {\em 2015 {IEEE} 56th {A}nnual {S}ymposium on {F}oundations of
  {C}omputer {S}cience---{FOCS} 2015}, pages 614--633. IEEE Computer Soc., Los
  Alamitos, CA, 2015.

\bibitem{Erdos1960}
P.~Erd\"os and A.~R\'enyi.
\newblock On the evolution of random graphs.
\newblock {\em Magyar Tud. Akad. Mat. Kutat\'o Int. K\"ozl.}, 5:17--61, 1960.

\bibitem{Erdos1979}
Paul Erd\"os, L\'aszl\'o Lov\'asz, and Joel Spencer.
\newblock Strong independence of graphcopy functions.
\newblock In {\em Graph theory and related topics ({P}roc. {C}onf., {U}niv.
  {W}aterloo, {W}aterloo, {O}nt., 1977)}, pages 165--172. Academic Press, New
  York-London, 1979.

\bibitem{Fafchamps2003}
Marcel Fafchamps and Susan Lund.
\newblock Risk-sharing networks in rural {P}hilippines.
\newblock {\em Journal of development Economics}, 71(2):261--287, 2003.

\bibitem{Feigenberg2010}
Benjamin Feigenberg, Erica~M Field, and Rohini Pande.
\newblock Building social capital through microfinance.
\newblock Technical report, National Bureau of Economic Research, 2010.

\bibitem{Frank1977}
Ove Frank.
\newblock Estimation of graph totals.
\newblock {\em Scand. J. Statist.}, 4(2):81--89, 1977.

\bibitem{Frank1978}
Ove Frank.
\newblock Estimation of the number of connected components in a graph by using
  a sampled subgraph.
\newblock {\em Scand. J. Statist.}, 5(4):177--188, 1978.

\bibitem{GLZ15}
Chao Gao, Yu~Lu, and Harrison~H Zhou.
\newblock Rate-optimal graphon estimation.
\newblock {\em The Annals of Statistics}, 43(6):2624--2652, 2015.

\bibitem{Goldreich17}
Oded Goldreich.
\newblock {\em Introduction to Property Testing}.
\newblock Cambrdige University, 2017.

\bibitem{Goldreich1998}
Oded Goldreich, Shari Goldwasser, and Dana Ron.
\newblock Property testing and its connection to learning and approximation.
\newblock {\em Journal of the ACM (JACM)}, 45(4):653--750, 1998.

\bibitem{Goldreich2008}
Oded Goldreich and Dana Ron.
\newblock Approximating average parameters of graphs.
\newblock {\em Random Structures Algorithms}, 32(4):473--493, 2008.

\bibitem{GR11}
Oded Goldreich and Dana Ron.
\newblock On testing expansion in bounded-degree graphs.
\newblock In {\em Studies in Complexity and Cryptography. Miscellanea on the
  Interplay between Randomness and Computation}, pages 68--75. Springer, 2011.

\bibitem{Goodman1949}
Leo~A. Goodman.
\newblock On the estimation of the number of classes in a population.
\newblock {\em Ann. Math. Statistics}, 20:572--579, 1949.

\bibitem{Goodman1961}
Leo~A. Goodman.
\newblock Snowball sampling.
\newblock {\em Ann. Math. Statist.}, 32:148--170, 1961.

\bibitem{Govindan2000}
Ramesh Govindan and Hongsuda Tangmunarunkit.
\newblock Heuristics for internet map discovery.
\newblock In {\em INFOCOM 2000. Nineteenth Annual Joint Conference of the IEEE
  Computer and Communications Societies. Proceedings. IEEE}, volume~3, pages
  1371--1380. IEEE, 2000.

\bibitem{Handcock2010}
Mark~S. Handcock and Krista~J. Gile.
\newblock Modeling social networks from sampled data.
\newblock {\em Ann. Appl. Stat.}, 4(1):5--25, 2010.

\bibitem{holland1983stochastic}
Paul~W Holland, Kathryn~Blackmond Laskey, and Samuel Leinhardt.
\newblock Stochastic blockmodels: First steps.
\newblock {\em Social networks}, 5(2):109--137, 1983.

\bibitem{HL81}
Paul~W Holland and Samuel Leinhardt.
\newblock An exponential family of probability distributions for directed
  graphs.
\newblock {\em Journal of the American Statistical Association},
  76(373):33--50, 1981.

\bibitem{HT52}
Daniel~G Horvitz and Donovan~J Thompson.
\newblock A generalization of sampling without replacement from a finite
  universe.
\newblock {\em Journal of the American Statistical Association},
  47(260):663--685, 1952.

\bibitem{Janson2004}
Svante Janson.
\newblock Large deviations for sums of partly dependent random variables.
\newblock {\em Random Structures Algorithms}, 24(3):234--248, 2004.

\bibitem{KlusowskiWu2017-motif}
Jason~M. Klusowski and Yihong Wu.
\newblock Counting motifs with graph sampling.
\newblock In {\em Proceedings of the 31st Conference On Learning Theory}, pages
  1966--2011, 2018.

\bibitem{Kocay1982}
W.~L. Kocay.
\newblock Some new methods in reconstruction theory.
\newblock In {\em Combinatorial mathematics, {IX} ({B}risbane, 1981)}, volume
  952 of {\em Lecture Notes in Math.}, pages 89--114. Springer, Berlin-New
  York, 1982.

\bibitem{Kolaczyk2009}
Eric~D Kolaczyk.
\newblock {\em Statistical Analysis of Network Data: Methods and Models}.
\newblock Springer Science \& Business Media, 2009.

\bibitem{Kolaczyk2017}
Eric~D. Kolaczyk.
\newblock {\em Topics at the Frontier of Statistics and Network Analysis:
  (Re)Visiting the Foundations}.
\newblock SemStat Elements. Cambridge University Press, 2017.

\bibitem{Leskovec2006}
Jure Leskovec and Christos Faloutsos.
\newblock Sampling from large graphs.
\newblock In {\em Proceedings of the 12th ACM SIGKDD International Conference
  on Knowledge Discovery and Data Mining}, pages 631--636. ACM, 2006.

\bibitem{collab}
Jure Leskovec and Andrej Krevl.
\newblock {SNAP Datasets}: {Stanford} large network dataset collection.
\newblock \url{http://snap.stanford.edu/data/ca-CondMat.html}, June 2014.

\bibitem{Lovasz12}
L{\'a}szl{\'o} Lov{\'a}sz.
\newblock {\em Large Networks and Graph Limits}, volume~60.
\newblock American Mathematical Society, 2012.

\bibitem{Luce1949}
R~Duncan Luce and Albert~D Perry.
\newblock A method of matrix analysis of group structure.
\newblock {\em Psychometrika}, 14(2):95--116, 1949.

\bibitem{McKay1997}
Brendan~D. McKay and Stanis\l aw~P. Radziszowski.
\newblock Subgraph counting identities and {R}amsey numbers.
\newblock {\em J. Combin. Theory Ser. B}, 69(2):193--209, 1997.

\bibitem{McMahon2003}
Elizabeth~W. McMahon, Beth~A. Shimkus, and Jessica~A. Wolfson.
\newblock Chordal graphs and the characteristic polynomial.
\newblock {\em Discrete Math.}, 262(1-3):211--219, 2003.

\bibitem{Natanzon2000}
Assaf Natanzon, Ron Shamir, and Roded Sharan.
\newblock A polynomial approximation algorithm for the minimum fill-in problem.
\newblock {\em SIAM J. Comput.}, 30(4):1067--1079, 2000.

\bibitem{odonnell}
Ryan O'Donnell.
\newblock {\em Analysis of {B}oolean functions}.
\newblock Cambridge University Press, New York, 2014.

\bibitem{Wu2016-2}
Alon Orlitsky, Ananda~Theertha Suresh, and Yihong Wu.
\newblock Optimal prediction of the number of unseen species.
\newblock {\em Proc. Natl. Acad. Sci. USA}, 113(47):13283--13288, 2016.

\bibitem{Polyanskiy2017}
Yury Polyanskiy, Ananda~Theertha Suresh, and Yihong Wu.
\newblock Sample complexity of population recovery.
\newblock In {\em Proceedings of Conference on Learning Theory (COLT)},
  Amsterdam, Netherland, Jul 2017.
\newblock arXiv:1702.05574.

\bibitem{Reingen1986}
Peter~H Reingen and Jerome~B Kernan.
\newblock Analysis of referral networks in marketing: Methods and illustration.
\newblock {\em Journal of Marketing Research}, pages 370--378, 1986.

\bibitem{Rose1976}
Donald~J. Rose, R.~Endre Tarjan, and George~S. Lueker.
\newblock Algorithmic aspects of vertex elimination on graphs.
\newblock {\em SIAM J. Comput.}, 5(2):266--283, 1976.

\bibitem{Rual2005}
Jean-Fran{\c{c}}ois Rual, Kavitha Venkatesan, Tong Hao, Tomoko
  Hirozane-Kishikawa, Am{\'e}lie Dricot, Ning Li, Gabriel~F Berriz, Francis~D
  Gibbons, Matija Dreze, Nono Ayivi-Guedehoussou, et~al.
\newblock Towards a proteome-scale map of the human protein--protein
  interaction network.
\newblock {\em Nature}, 437(7062):1173, 2005.

\bibitem{Salganik2004}
Matthew~J Salganik and Douglas~D Heckathorn.
\newblock Sampling and estimation in hidden populations using respondent-driven
  sampling.
\newblock {\em Sociological methodology}, 34(1):193--240, 2004.

\bibitem{Stumpf2008}
Michael~PH Stumpf, Thomas Thorne, Eric de~Silva, Ronald Stewart, Hyeong~Jun An,
  Michael Lappe, and Carsten Wiuf.
\newblock Estimating the size of the human interactome.
\newblock {\em Proceedings of the National Academy of Sciences},
  105(19):6959--6964, 2008.

\bibitem{Tarjan1984}
Robert~E. Tarjan and Mihalis Yannakakis.
\newblock Simple linear-time algorithms to test chordality of graphs, test
  acyclicity of hypergraphs, and selectively reduce acyclic hypergraphs.
\newblock {\em SIAM J. Comput.}, 13:566--579, 1984.

\bibitem{Tsybakov2009}
Alexandre~B. Tsybakov.
\newblock {\em Introduction to nonparametric estimation}.
\newblock Springer Series in Statistics. Springer, New York, 2009.
\newblock Revised and extended from the 2004 French original, Translated by
  Vladimir Zaiats.

\bibitem{west-book}
Douglas~B. West.
\newblock {\em Introduction to graph theory}.
\newblock Prentice Hall, Inc., Upper Saddle River, NJ, 1996.

\bibitem{Whitney1932}
Hassler Whitney.
\newblock The coloring of graphs.
\newblock {\em Ann. of Math. (2)}, 33(4):688--718, 1932.

\bibitem{WY16-distinct}
Yihong Wu and Pengkun Yang.
\newblock Sample complexity of the distinct element problem.
\newblock {\em arxiv preprint arxiv:1612.03375}, Apr 2016.

\end{thebibliography}

\end{document}